\newtheorem{thm}{Theorem}[section]
\newtheorem*{thm*}{Theorem}
\newenvironment{customthm}[1]
  {\innercustomthm}
  {\endinnercustomthm}
\newtheorem{lem}[thm]{Lemma}
\newtheorem{prop}[thm]{Proposition}
\theoremstyle{definition}
\newtheorem{defi}[thm]{Definition}
\theoremstyle{remark}
\newtheorem{rmk}[thm]{Remark}
\numberwithin{equation}{section}
\newcommand{\RR}{\ensuremath{\mathbb{R}}}
\newcommand{\ZZ}{\ensuremath{\mathbb{Z}}}
\newcommand{\NN}{\mathbb{N}}
\newcommand{\TT}{\mathbb{T}}
\newcommand{\sB}{\mathcal{B}}
\newcommand{\cC}{\mathcal{C}}
\newcommand{\cCbar}{\widetilde{\mathcal{C}}}
\newcommand{\sS}{{\mathcal{S}}}
\newcommand{\floor}[1]{\lfloor{#1}\rfloor}
\newcommand{\ceil}[1]{\lceil{#1}\rceil}
\newcommand{\um}{{\mu}}
\newcommand{\vm}{{\nu}}
\newcommand{\cD}{\mathcal{D}}
\newcommand{\alphap}{{\alpha'}}
\newcommand{\betap}{{\beta'}}
\newcommand{\talpha}{{\sigma}}
\newcommand{\trho}{{\tau}}
\DeclareMathOperator{\im}{im} 
\definecolor {UMblue}  {RGB}{0, 39, 76}
\definecolor {UMmaize} {RGB}{255, 203, 5}
\definecolor {color_b}{RGB}{255,0,0}
\definecolor {color_c}{RGB}{20, 200, 30}
\definecolor {color_a}{RGB}{0,0,255}
\title[Dilated floor functions having nonnegative commutator I.]
{Dilated floor functions having nonnegative commutator\\
I. Positive and mixed sign dilations}
\author{J.\ C.\ Lagarias}
\address{Dept.\ of Mathematics, University of Michigan,
Ann Arbor, MI 48109--1043}
\email{lagarias@umich.edu}
\author{D.\ H.\ Richman}
\address{Dept.\ of Mathematics, University of Michigan,
Ann Arbor, MI 48109--1043}
\email{hrichman@umich.edu}
\subjclass[2010]{Primary 11A25; Secondary  11B83, 11D07,  11Z05, 26D07, 52C05}
\thanks{Work of the  first author was partially supported by NSF grants DMS-1401224 and DMS-1701576,
and work of the second author was partially supported by NSF grant DMS-1600223.}
\begin{document}
\begin{abstract}
In this paper and its sequel we classify the set $S$ of all real parameter pairs 
$(\alpha,\beta)$ such that the dilated floor functions 
$f_\alpha(x) = \lfloor{\alpha x}\rfloor$ and $f_\beta(x) = \lfloor{\beta x}\rfloor$ 
have a nonnegative commutator, i.e. 
$ [ f_{\alpha}, f_{\beta}](x) = \lfloor{\alpha \lfloor{\beta x}\rfloor}\rfloor - 
\lfloor{\beta \lfloor{\alpha x}\rfloor}\rfloor \geq 0$ 
for all real $x$. 
The relation $[f_\alpha,f_\beta]\geq 0$ induces a preorder on the set of non-zero dilation factors 
$\alpha, \beta$, which extends the divisibility partial order on positive integers. 
This paper treats the cases where at least one of the 
dilation parameters $\alpha$ or $\beta$ is nonnegative. 
The analysis of the positive dilations case is related to the theory of Beatty sequences 
and to the Diophantine Frobenius problem in two generators.
\end{abstract} 
\maketitle

\tableofcontents
\bibliographystyle{amsplain}

\section{Introduction}

The {\em floor function}  
$\floor x $
rounds a real number down to the nearest integer. It is a basic operation of discretization.
Given a real parameter $\alpha$, we define the {\em dilated floor function} $f_\alpha(x)= \floor{\alpha x}$; it 
performs discretization at the length scale $\alpha^{-1}$.
Dilated floor functions have recently played a role in describing Ehrhart quasi-polynomials for
dilates of rational simplices, treating them as
step-polynomials, cf. \cite[Example 13]{BBDKV:2012}, \cite[Definition 23]{BBKV:2013}, \cite{BBDKV:2016}.

It is a fundamental question to understand the interaction of discretization at two different scales.
This problem arises in computer graphics, for example, when one rescales figures already discretized.
Mathematically it  leads to study of  compositions of two such functions 
$ f_{\alpha}\circ f_{\beta}(x) = \floor{\alpha \floor{\beta x}}$. 
A number of identities relating dilated floor functions with different dilation factors are given in 
Graham, Knuth and Patashnik \cite[Chap. 3]{GKP94}.
They raised research problems  concerning compositions of dilated floor functions, 
cf.  \cite[Research problem 50, p.101]{GKP94},
some later addressed in  Graham and O'Bryant \cite{GO10}. 
Compositional iterates of a single dilated  floor function were studied in 1994 by Fraenkel \cite{Fra94}, and iterates of floor functions and fractional part functions in 1995 by H\o{a}land and Knuth \cite{HK95}.

Dilated floor functions generally  do not  commute under composition of functions; 
the order of taking   successive discretizations matters. 
In other words, the compositional commutator 
$$
 [ f_{\alpha}, f_{\beta}](x) := f_{\alpha}\circ f_{\beta} (x) - f_{\beta}\circ f_{\alpha}(x) 
 = \floor{\alpha \floor{\beta x}} -  \floor{\beta \floor{\alpha  x}}
$$ 
is generally not the zero function.
The commutator $[f_{\alpha}, f_{\beta}]$
is always a bounded generalized polynomial in the sense of Bergelson and Leibman \cite{BerL07}, 
who studied ergodic properties of generalized polynomials under iteration.

Recently the   authors  together  with T. Murayama \cite{LMR16}   
obtained  necessary and sufficient conditions 
for two dilated floor  functions to commute 
under composition, i.e. to have
 $[f_{\alpha}, f_{\beta}] =0$. 

\setcounter{thm}{-1}
\begin{thm}[Commuting Dilated Floor Functions]
\label{thm:commuting}
 The complete set of all $(\alpha, \beta) \in \RR^2$ such that
  \[
    \lfloor \alpha \lfloor \beta x\rfloor \rfloor = \lfloor \beta \lfloor \alpha x\rfloor \rfloor \quad \text{for all }x\in\RR
  \]
  consists of:
  \begin{enumerate}
    \item[(i)]
      three continuous families $(\alpha, \alpha)$, $(\alpha, 0)$, $(0, \beta)$ for
      all $\alpha, \in \RR$ resp. $\beta \in \RR$.
          \item[(ii)]
      the infinite discrete family 
      \[
        \left\{ (\alpha, \beta) = \left(\frac{1}{m}, \frac{1}{n}\right) : m, n
        \ge 1 \right\},
      \]
      where $m,n$ are positive integers. (The families overlap when $m=n$.)
  \end{enumerate}
\end{thm}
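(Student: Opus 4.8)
The plan is to prove sufficiency by direct computation and necessity by a level-by-level comparison of the two iterated step functions $f_\alpha\circ f_\beta$ and $f_\beta\circ f_\alpha$. Sufficiency is quick: the diagonal family $(\alpha,\alpha)$ is trivial, and $(\alpha,0)$ and $(0,\beta)$ follow because $f_0\equiv 0$. For the pair $(\tfrac1m,\tfrac1n)$ one invokes the classical nested-floor identity $\lfloor\tfrac1m\lfloor\tfrac xn\rfloor\rfloor=\lfloor\tfrac{x}{mn}\rfloor$, valid for all real $x$ and all positive integers $m,n$; both sides of the commuting identity then collapse to $\lfloor\tfrac{x}{mn}\rfloor$.

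For necessity, assume $[f_\alpha,f_\beta]\equiv 0$. If $\alpha=0$, $\beta=0$, or $\alpha=\beta$ we lie in one of the listed families, so suppose $\alpha\beta\neq 0$ and $\alpha\neq\beta$. Opposite signs are excluded at once: if, say, $\alpha>0>\beta$, then for small $x>0$ one has $\lfloor\beta x\rfloor=-1$ while $\lfloor\alpha x\rfloor=0$, so the commutator equals $\lfloor-\alpha\rfloor<0$. Hence $\alpha$ and $\beta$ share a sign; by the symmetry $[f_\alpha,f_\beta]=-[f_\beta,f_\alpha]$ I will take $0<\alpha<\beta$ (the two-negative case runs in parallel after rewriting $\lfloor -t\rfloor=-\lceil t\rceil$, which again makes both compositions monotone step functions, with minor index shifts, and produces no new pairs). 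On $(0,\infty)$ set $g_1:=f_\alpha\circ f_\beta$ and $g_2:=f_\beta\circ f_\alpha$; these are nondecreasing, right-continuous step functions, with $g_1\equiv\lfloor\alpha k\rfloor$ on $[\tfrac k\beta,\tfrac{k+1}\beta)$ and $g_2\equiv\lfloor\beta j\rfloor$ on $[\tfrac j\alpha,\tfrac{j+1}\alpha)$. A first reduction shows $\alpha,\beta\le 1$: if $\alpha>1$ then $g_1$ jumps at every point of $\tfrac1\beta\ZZ_{\ge1}$, which upon matching $g_2$ would force $\alpha/\beta\in\ZZ_{\ge1}$, impossible since $\alpha<\beta$; if $\beta>1$ a parallel argument forces $\beta=k\alpha$ for an integer $k\ge2$, and then a fractional-parts estimate on the nonjump points of $g_1$ forces $\beta=k\alpha\le1$, a contradiction.

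With $\alpha,\beta\le 1$ the sequence $\lfloor\alpha k\rfloor$ $(k\ge0)$ takes every nonnegative integer value, so $g_1$ first attains the value $r$ at $x=\lceil r/\alpha\rceil/\beta$ and $g_2$ first attains $r$ at $x=\lceil r/\beta\rceil/\alpha$. Equating these for all $r$ (forced by $g_1=g_2$) gives the infinite system
\[
\alpha\lceil r/\alpha\rceil=\beta\lceil r/\beta\rceil\qquad (r\ge 1).
\]
The case $r=1$ reads $\alpha\lceil 1/\alpha\rceil=\beta\lceil 1/\beta\rceil=:c$; writing $m=\lceil1/\alpha\rceil$ and $n=\lceil1/\beta\rceil$ (so that $m>n\ge1$) this says $\alpha=c/m$ and $\beta=c/n$ — in particular $\alpha/\beta\in\QQ$ — with $c\in[1,\tfrac m{m-1})$. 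If $c=1$ then $(\alpha,\beta)=(\tfrac1m,\tfrac1n)$, a member of the discrete family, and we are done.

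The crux is ruling out $c>1$. Put $\theta=1/c\in(\tfrac{m-1}m,1)$, so the system becomes $n\lceil rm\theta\rceil=m\lceil rn\theta\rceil$ for all $r\ge1$. I would show by induction on $r$ that $\theta>1-\tfrac1{rm}$ for every $r\ge1$: the case $r=1$ is the hypothesis $c<\tfrac m{m-1}$, and given $\theta>1-\tfrac1{(r-1)m}$ one finds $\lceil rm\theta\rceil\in\{rm-1,rm\}$ and $\lceil rn\theta\rceil\in\{rn-1,rn\}$, whence checking the four cases in $n\lceil rm\theta\rceil=m\lceil rn\theta\rceil$ and using $0<n<m$ leaves only $\lceil rm\theta\rceil=rm$, i.e.\ $\theta>1-\tfrac1{rm}$. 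Letting $r\to\infty$ forces $\theta\ge1$, contradicting $\theta<1$; hence $c=1$ and the classification follows. I expect this inductive Diophantine step — showing the full system of level equations admits no ``near-unit-fraction'' solution with $c\neq1$ — to be the main obstacle, together with the careful bookkeeping needed to run the parallel argument through the remaining sign and magnitude regimes (in particular the ceiling-function forms of the jump positions in the two-negative case).
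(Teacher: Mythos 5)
The paper does not reprove Theorem \ref{thm:commuting}; it is quoted from the prior joint paper with Murayama \cite{LMR16} (and one can also read it off as $S\cap S^{\mathsf T}$ from Theorems \ref{thm:mixed-sign}--\ref{thm:negative} of the present paper). So there is no proof here to compare against line by line, and your argument must be judged on its own. The heart of it --- reducing to $0<\alpha<\beta\le 1$, passing to the ``first time $g_i$ attains level $r$'' to get the system $\alpha\lceil r/\alpha\rceil=\beta\lceil r/\beta\rceil$, and then the induction showing $\theta>1-\tfrac1{rm}$ for all $r$ --- is correct and rather nice. I checked the four-case bookkeeping in the inductive step: with $\theta>1-\tfrac1{(r-1)m}$ and $\theta<1$, indeed $\lceil rm\theta\rceil\in\{rm-1,rm\}$, $\lceil rn\theta\rceil\in\{rn-1,rn\}$, and $n\lceil rm\theta\rceil=m\lceil rn\theta\rceil$ forces the $(rm,rn)$ case since $m\ne n$; letting $r\to\infty$ kills $c>1$.

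There are, however, two places where the argument is only sketched and one of them is a genuine gap in the write-up. First, the reduction ruling out $\beta>1$ is described only as ``a fractional-parts estimate on the nonjump points of $g_1$.'' It does close --- once $\beta=i\alpha$ with $i\ge 2$ is forced, the jump indices of $g_1$ must be exactly the multiples of $i$, which forces $\lfloor j i\alpha\rfloor=j$ for every $j$, hence $\tfrac1i\le\alpha<\tfrac1i+\tfrac1{ji}$ for all $j$, so $\alpha=\tfrac1i$ and $\beta=1$, a contradiction --- but you should actually write this out rather than wave at it. Second, and more importantly, the two-negative case does \emph{not} ``run in parallel with minor index shifts.'' After writing $\alpha=-\alpha'$, $\beta=-\beta'$ the identity at small $x>0$ gives $\lfloor\alpha'\rfloor=\lfloor\beta'\rfloor$, which does not imply $\alpha',\beta'\le 1$; pairs like $\alpha',\beta'\in(1,2)$ survive the analogue of your first reduction, so the structure of the case split is different. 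Fortunately the negative case is in fact \emph{much easier} than a parallel run of the whole machine: assume $\alpha'>\beta'>0$ and take $x=-y$ with $\tfrac1{\alpha'}\le y<\tfrac1{\beta'}$; then $\lfloor\beta x\rfloor=\lfloor\beta' y\rfloor=0$ while $\lfloor\alpha x\rfloor=\lfloor\alpha' y\rfloor\ge 1$, so
\[
[f_\alpha,f_\beta](x)=\lfloor\alpha\cdot 0\rfloor-\lfloor\beta\lfloor\alpha x\rfloor\rfloor
=-\bigl\lfloor -\beta'\lfloor\alpha' y\rfloor\bigr\rfloor=\bigl\lceil\beta'\lfloor\alpha' y\rfloor\bigr\rceil\ge 1>0,
\]
so the commutator is already nonzero and no new negative pairs arise. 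Please replace the ``minor index shifts'' remark with this direct computation; as written, the claim that the negative case is parallel is not correct and would mislead a reader trying to fill it in.
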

\noindent We let $S_0$ denote this set of dilation factors $(\alpha,\beta)$ satisfying $[f_\alpha,f_\beta]=0$.
The  interesting feature of this result is the existence of the  
discrete family  of ``exceptional'' solutions given in (ii),
which appear to have number-theoretic significance.

This paper addresses the more general problem of determining the set $S$ of all values
 $(\alpha,\beta) \in \RR^2$ that satisfy the one-sided inequality 
\begin{equation}\label{eqn:ineq} 
\floor{\alpha \floor{\beta x}} \geq \floor{\beta \floor{\alpha x}} \quad\text{for all }x\in\RR.
\end{equation}
In what follows we will  sometimes
abbreviate \eqref{eqn:ineq} by writing $[f_\alpha, f_\beta] \geq 0$, 
with the understanding that this inequality should hold identically for all $x\in\RR$.
When $\alpha = 0$ or $\beta = 0$ it is easy to see the inequality  is satisfied, 
as both sides are equal to zero.
If $\alpha $ and $\beta $ differ in sign, the situation is also easy to analyze and
the answer does not depend on the magnitudes $|\alpha|, |\beta|$; 
the inequality holds exactly when $\alpha<0$ and $\beta>0$
(see Section \ref{sec:basic} for details).
The cases when $\alpha$ and $\beta$ have the same sign, however, lead to an intricate and interesting answer.


%
%

\subsection{Classification theorems}

The nonnegative commutator set $S$ contains the coordinate axes, where $\alpha=0$ or $\beta=0$,
by inspection, since  $\floor{\alpha \floor{\beta x}}=\floor{\beta \floor{\alpha x}} = 0.$
 The description of the nonnegative commutator set $S$ for nonzero $\alpha, \beta$ 
is given by the following three classification theorems, according to the signs of
$\alpha$ and $\beta$.
The first case is for mixed sign dilations, which  is  straightforward.

\begin{thm}[Mixed Sign Dilations Classification]
\label{thm:mixed-sign}
Suppose dilation factors $\alpha$ and $\beta$ have opposite signs.
\begin{enumerate}
\item[(a)]
 If $\alpha < 0$ and $\beta > 0$, then the commutator relation $[f_{\alpha}, f_\beta]\geq 0$ 
is  satisfied.
\item[(b)]
 If $\alpha > 0$ and $\beta < 0$, then the commutator relation $[f_\alpha, f_{\beta}]\geq 0$ 
is not satisfied.
\end{enumerate}
\end{thm}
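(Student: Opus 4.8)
The plan is as follows. For part (a), with $\alpha < 0 < \beta$, I would sandwich both sides of the desired inequality between the common quantity $\floor{\alpha\beta x}$. Since $\floor{\beta x}\le \beta x$ and multiplication by the negative number $\alpha$ reverses inequalities, $\alpha\floor{\beta x}\ge \alpha\beta x$; applying the (monotone) floor function gives $\floor{\alpha\floor{\beta x}}\ge \floor{\alpha\beta x}$. On the other side $\floor{\alpha x}\le \alpha x$ and multiplication by the positive number $\beta$ preserves the inequality, so $\beta\floor{\alpha x}\le \beta\alpha x$ and hence $\floor{\beta\floor{\alpha x}}\le \floor{\alpha\beta x}$. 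Chaining the two bounds yields $\floor{\alpha\floor{\beta x}}\ge \floor{\alpha\beta x}\ge \floor{\beta\floor{\alpha x}}$, which is exactly \eqref{eqn:ineq}. This argument is uniform in $x$ and uses nothing beyond $\floor t\le t$ and monotonicity of the floor.

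For part (b), with $\alpha > 0 > \beta$, I would exhibit a single $x$ at which the commutator is strictly negative. Choosing any $x$ with $0 < x < \min\{1/\alpha,\,1/\abs{\beta}\}$, one has $\alpha x\in(0,1)$, so $\floor{\alpha x}=0$ and the right-hand side of \eqref{eqn:ineq} equals $\floor{\beta\cdot 0}=0$; while $\beta x\in(-1,0)$, so $\floor{\beta x}=-1$ and the left-hand side equals $\floor{-\alpha}$, which is $\le -1$ because $\alpha>0$. Hence $[f_\alpha,f_\beta](x)=\floor{-\alpha}<0$, and \eqref{eqn:ineq} fails. Alternatively, one can avoid the explicit computation entirely: applying part (a) to the pair $(\beta,\alpha)$ (legitimate since $\beta<0<\alpha$) shows $[f_\beta,f_\alpha]\ge 0$, i.e.\ $[f_\alpha,f_\beta]\le 0$ identically, so if \eqref{eqn:ineq} held for $(\alpha,\beta)$ then $[f_\alpha,f_\beta]\equiv 0$; but Theorem~\ref{thm:commuting} shows no pair in the commuting set $S_0$ has coordinates of opposite sign, a contradiction.

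There is no serious obstacle here; the theorem is genuinely elementary. The only point requiring a little care is in (b): one wants a counterexample $x$ whose validity does not depend on the sizes of $\abs{\alpha}$ and $\abs{\beta}$, and the choice $0 < x < \min\{1/\alpha,\,1/\abs{\beta}\}$ accomplishes this. Everything else is forced by the basic facts $\floor t\le t$, $\floor t > t-1$, and monotonicity of $\floor{\cdot}$, together with the sign bookkeeping when multiplying an inequality by $\alpha$ or $\beta$.
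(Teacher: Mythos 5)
Your proof is correct and follows essentially the same approach as the paper: for (a) you sandwich both composites around the common quantity $\floor{\alpha\beta x}$ (the paper does the same thing after a change of variables, sandwiching around $y$), and for (b) you exhibit a small positive $x$ where the commutator is strictly negative, which is precisely the paper's counterexample in rescaled form. Your alternative argument for (b), combining part (a) applied to $(\beta,\alpha)$ with Theorem~\ref{thm:commuting}, is a valid and tidy observation, though not the route the paper takes.
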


The main part of the analysis is the  classification theorem for same sign dilations,
which splits in two cases: positive dilations and negative dilations. 
Positive dilations are classified in the following result.

\begin{thm}[Positive Dilations Classification]
\label{thm:positive}
Given dilation factors $\alpha, \beta > 0$, the inequality
\[ 
\floor{\alpha \floor{\beta x}} \geq \floor{\beta \floor{\alpha x}} 
\quad\text{for all }x\in\RR
\]
holds if and only if there are integers $m, n\geq 0$, not both $0$,  such that
\begin{equation}
\label{eq:positive}
m  \alpha \beta + n\alpha = \beta.
\end{equation}
\end{thm}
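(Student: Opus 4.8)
The plan is to prove both directions separately, treating the condition \eqref{eq:positive} as the key algebraic invariant.

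\textbf{Sufficiency.} Suppose integers $m, n \geq 0$ (not both zero) satisfy $m\alpha\beta + n\alpha = \beta$. First I would handle degenerate subcases: if $m = 0$ then $n\alpha = \beta$ with $n \geq 1$, so $\beta$ is an integer multiple of $\alpha$; if $n = 0$ then $m\alpha\beta = \beta$, forcing $\alpha = 1/m$ with $m \geq 1$. In the generic case $m, n \geq 1$. The uniform strategy is to write $x = \lfloor x\rfloor + \{x\}$ is not quite enough; instead I would analyze $\lfloor \alpha \lfloor \beta x\rfloor\rfloor$ and $\lfloor \beta\lfloor\alpha x\rfloor\rfloor$ by substituting the relation to express one dilation in terms of the other. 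For instance, from $\beta = \alpha(m\beta + n)$ one gets $\lfloor \beta\lfloor\alpha x\rfloor\rfloor = \lfloor \alpha(m\beta+n)\lfloor\alpha x\rfloor\rfloor$, and since $n\lfloor\alpha x\rfloor \in \ZZ$ one can peel it off: this equals $n\lfloor\alpha x\rfloor + \lfloor m\alpha\beta\lfloor\alpha x\rfloor\rfloor$ only after further manipulation. The cleaner route is likely to use the identity $\lfloor \alpha\lfloor\beta x\rfloor\rfloor - \lfloor\beta\lfloor\alpha x\rfloor\rfloor$ and show it equals a sum of floor-of-fractional-part terms that is manifestly $\geq 0$ when the relation holds; the relation $m\alpha\beta + n\alpha = \beta$ should make a key cross-term vanish or become an integer. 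I expect to reduce to showing $\lfloor\alpha\beta x\rfloor \geq \lfloor\alpha\lfloor\beta x\rfloor\rfloor$ and similar one-step comparisons, combined via the linear relation.

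\textbf{Necessity.} This is the harder direction and I expect it to be the main obstacle. Assume $[f_\alpha, f_\beta] \geq 0$ for all $x$. The strategy is to extract the Diophantine relation by probing with cleverly chosen values of $x$ and taking limits. A natural idea: evaluate near points where $\beta x$ is just below an integer $k$, so $\lfloor\beta x\rfloor = k-1$ jumps, and near where $\alpha x$ crosses an integer; the asymmetry in the jump locations of the two sides forces constraints. Concretely, the function $\lfloor\alpha\lfloor\beta x\rfloor\rfloor$ as a function of $x$ is a step function with jumps at $x = k/\beta$; evaluating the inequality as $x \to (k/\beta)^-$ and $x\to (k/\beta)^+$ and comparing with the behavior of $\lfloor\beta\lfloor\alpha x\rfloor\rfloor$ should yield, for every integer $k$, an inequality of the form $\lfloor\alpha k\rfloor - \lfloor\alpha(k-1)\rfloor \geq$ (something involving $\beta$). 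Summing or comparing these Beatty-sequence-type increments, and using that $\lfloor\alpha k\rfloor$ grows like $\alpha k$, should force $\alpha\beta \cdot(\text{integer}) + \alpha\cdot(\text{integer})$ to relate to $\beta$. I would look for the critical $x$ where both inner floors jump simultaneously or in a coordinated way, since that is where the commutator is most constrained; the set of such $x$ is governed by the arithmetic of $\alpha$ and $\beta$, and I anticipate the connection to the Frobenius problem in two generators enters here — the relation \eqref{eq:positive} is exactly the statement that $\beta$ lies in the numerical semigroup-like set generated by $\alpha\beta$ and $\alpha$.

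\textbf{Structuring the necessity argument.} To make the necessity proof tractable I would first establish normalization lemmas: one may likely assume $\alpha \leq \beta$ or reduce to $\beta/\alpha$ and $\alpha\beta$ as the relevant quantities, and separate the cases where $\alpha$ (or $\beta$) is rational versus irrational. In the irrational case, equidistribution of $\{\beta x\}$ and $\{\alpha x\}$ should let me push the inequality to a limiting inequality on the "defect" $\{\alpha\}\cdot$-type quantities that can only hold in boundary (equality) configurations, which are exactly the Diophantine ones; if no relation \eqref{eq:positive} holds, I expect to produce an explicit $x$ violating the inequality by choosing $x$ so that $\{\beta x\}$ is near $0$ while $\{\alpha x\}$ is near $1$, maximizing $\lfloor\beta\lfloor\alpha x\rfloor\rfloor$ relative to $\lfloor\alpha\lfloor\beta x\rfloor\rfloor$. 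The main obstacle is organizing the case analysis cleanly and identifying the correct extremal $x$ in each case; I would expect the proof to invoke the basic reductions from Section \ref{sec:basic} and the structure of Beatty sequences to keep the casework finite.
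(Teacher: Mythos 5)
Your outline captures the right high-level shape (two directions, rational/irrational split, a connection to the Frobenius problem) but there are genuine gaps that would block both halves as proposed.

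\textbf{Sufficiency.} The decomposition you propose does not work. The peeling step you consider, writing
$\lfloor\beta\lfloor\alpha x\rfloor\rfloor=\lfloor m\alpha\beta\lfloor\alpha x\rfloor+n\alpha\lfloor\alpha x\rfloor\rfloor$
and hoping to extract an integer, fails because $n\alpha\lfloor\alpha x\rfloor$ is not an integer in general. Your fallback, reducing to ``$\lfloor\alpha\beta x\rfloor\geq\lfloor\alpha\lfloor\beta x\rfloor\rfloor$ and similar one-step comparisons,'' gives a comparison that is \emph{always} true (by monotonicity of the floor) and never uses the relation $m\alpha\beta+n\alpha=\beta$; moreover both $\lfloor\alpha\lfloor\beta x\rfloor\rfloor$ and $\lfloor\beta\lfloor\alpha x\rfloor\rfloor$ are bounded above by $\lfloor\alpha\beta x\rfloor$, so this does not separate them. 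The paper's proof avoids this trap by first proving (Proposition \ref{prop:rounding-pos}, then Proposition \ref{lem:diagonal-pos}) that $[f_\alpha,f_\beta]\geq 0$ is equivalent, after the change of coordinates $(\um,\vm)=(1/\alpha,\beta/\alpha)$, to the rectangular lattice $\um\ZZ\times\vm\ZZ$ missing the open ``fattened diagonal'' $\cD$; sufficiency then follows from the elementary geometric observation (Lemma \ref{lem:hyperbola-pos}) that the hyperbola $\frac{m}{x}+\frac{n}{y}=1$ meets the diagonal only at an integer point and is monotone, so lattice points on it miss $\cD$. Without some reformulation of this kind, the direct floor-identity route is unlikely to close.

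\textbf{Necessity.} Your intuition (probe at jump points; use equidistribution for irrational slopes; connect to Frobenius for rational slopes) is in the right spirit, but the proposal is missing the key organizing idea. The paper recasts the disjointness condition as a condition on the cyclic subgroup $\langle(\talpha,\trho)\rangle$ of the torus $\TT=\RR^2/\ZZ^2$ avoiding an open corner rectangle (Proposition \ref{prop:torus-pos}), where $(\talpha,\trho)=(\alpha,\alpha/\beta)$. The irrational case is then handled cleanly by the closed subgroup theorem: the closure of the cyclic subgroup is either all of $\TT$ (dense, so it hits the rectangle) or a one-dimensional subgroup $\{mx+ny\in\ZZ\}$, and one reads off $m\talpha+n\trho=k$ with $m,n\geq 0$, $k=1$ forced, from the shape of the rectangle. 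Your equidistribution idea is essentially this in disguise, but you would have to reinvent this structure to make a rigorous case distinction. For the rational case, ``identify the correct extremal $x$'' is not enough: the paper's argument requires Sylvester duality for the two-generator Frobenius problem (if $b\notin s\NN+t\NN$ then $st-s-t-b\in s\NN+t\NN$), which it uses to explicitly construct a lattice/torus point landing inside the forbidden rectangle. You flag the Frobenius connection but do not supply the duality step, which is the crux of the rational case.

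In short, both directions of your plan need a reformulation of the inequality (rounding functions, then lattice or torus disjointness) before the geometry or arithmetic you gesture at can be brought to bear; as written, the sufficiency computation does not go through and the necessity argument is missing its two main lemmas.
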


We describe the positive dilation part of the  set $S$ geometrically 
in more detail in Section \ref{subsec:21}, 
and we prove Theorem \ref{thm:positive} in Sections \ref{sec:positive-S}
and \ref{sec:positive-N}.

The negative dilation case  has a more complicated classification. We state it here, 
and will prove it  in Part II \cite{LagR:2018b}.
 
\begin{thm}[Negative Dilations Classification]
\label{thm:negative}
Given dilation factors $\alpha, \beta < 0$, the inequality
\[ \floor{\alpha \floor{\beta x}} \geq \floor{\beta \floor{\alpha x}} \quad\text{for all }x\in\RR \]
holds if and only if one (or more) of the following conditions holds:\\

\begin{enumerate}[(i)]
\item\label{it:quad3-a} There are integers $m \geq 0$, $n\geq 1$ such that
\begin{equation}\label{eq:quad3-1}
 m \alpha \beta - n \beta  = -\alpha.
\end{equation}

\item\label{it:quad3-b} There are coprime integers $p,q\geq 1$ such that
\begin{equation}\label{eq:quad3-2}
\alpha = - \frac{q}{p}, \quad  - \frac{1}{p} \leq \beta \leq 0.
\end{equation}

\item\label{it:quad3-c} There are coprime integers $p, q \geq 1$ and integers $m\geq 0, n\geq 1, r\geq 2$ such that
\begin{equation}\label{eq:quad3-3}
\alpha = -\frac{q}{p}, \quad \beta = -\frac{1}{p}\left(1 + \frac{1}{r}\left(  \frac{m}{p} + \frac{n}{q}-1\right) \right)^{-1}
\end{equation}
with $0< \frac{m}{p} + \frac{n}{q} <1$.
\end{enumerate}
\end{thm}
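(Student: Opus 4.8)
\textbf{Proof proposal for Theorem \ref{thm:negative} (Negative Dilations Classification).}

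The plan is to reduce the negative-dilation problem to the positive-dilation problem already settled in Theorem \ref{thm:positive}, together with the mixed-sign facts in Theorem \ref{thm:mixed-sign} and the basic analysis of Section \ref{sec:basic}. Write $\alpha = -a$, $\beta = -b$ with $a, b > 0$. The first step is to establish a ``reflection'' identity for the floor function under negation: using $\floor{-t} = -\ceil{t} = -\floor{t} - \mathbf{1}[t \notin \ZZ]$, I would expand $\floor{\alpha\floor{\beta x}} = \floor{-a\floor{-bx}}$ and $\floor{\beta\floor{\alpha x}} = \floor{-b\floor{-ax}}$ in terms of $a$, $b$, and the fractional-part indicator functions. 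This converts the inequality $[f_\alpha,f_\beta]\ge 0$ into a statement about $a\ceil{bx}$ versus $b\ceil{ax}$, i.e. an inequality governed by ceiling-dilation commutators. The key point is that ceiling functions satisfy $\ceil{t} = -\floor{-t}$, so the ceiling-commutator inequality for $(a,b)$ is, up to a sign flip, the floor-commutator inequality for $(-a,-b) = (\alpha,\beta)$ — which is circular unless handled carefully. The correct move is instead to relate the ceiling commutator for positive parameters $(a,b)$ to the \emph{floor} commutator for positive parameters via the substitution $x \mapsto -x$ plus boundary corrections, so that the negative-dilation inequality becomes: ``the positive-parameter floor inequality holds, \emph{except possibly} on a measure-zero set of $x$ where the boundary indicator terms spoil it.'' Thus conditions (i)–(iii) should emerge as: (i) the image of the positive-dilation solution locus \eqref{eq:positive} under the reflection $(\alpha,\beta)\mapsto(-\beta,-\alpha)$ or similar (explaining why \eqref{eq:quad3-1} looks like \eqref{eq:positive} with roles of $\alpha,\beta$ swapped and a sign changed), while (ii) and (iii) are \emph{new} solutions created precisely by the boundary-correction terms, which have no analogue in the positive case.

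The second step is the detailed case analysis of the boundary terms. Following the structure used for Theorem \ref{thm:positive} in Sections \ref{sec:positive-S}–\ref{sec:positive-N}, I would separate sufficiency (``S'') from necessity (``N''). For sufficiency, I would directly verify that each of the three families \eqref{eq:quad3-1}, \eqref{eq:quad3-2}, \eqref{eq:quad3-3} makes $\floor{\alpha\floor{\beta x}} - \floor{\beta\floor{\alpha x}} \ge 0$ for all $x$: family (i) by substituting the linear relation into the reflected identity and checking the indicator terms cancel favorably; families (ii) and (iii) by using the constraint $\alpha = -q/p$ (so $\alpha$ is a negative rational with denominator $p$) to control $\floor{\beta x}$ modulo $1/p$, reducing to a finite check parametrized by the residue of $\floor{px}$ — here the constraints $-1/p \le \beta \le 0$ in (ii) and the precise value of $\beta$ in (iii) are exactly what make the worst-case residue nonnegative. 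For necessity, I would argue contrapositively: assuming $(\alpha,\beta)$ lies in none of the three families, I would exhibit an explicit $x$ (typically of the form $x = k/q + \epsilon$ or a small perturbation of a point where $\alpha x$ or $\beta x$ is near an integer) at which the commutator is negative, using the same kind of ``critical point'' constructions as in Section \ref{sec:positive-N}.

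The main obstacle will be the necessity direction for the regime where $\alpha = -q/p$ is rational but $\beta$ is irrational or has a denominator incompatible with $p$ — this is the boundary between families (ii)/(iii) and the complement, and it is where the boundary indicator terms interact most delicately with the Beatty/three-distance structure of $\{n\beta\}$. Concretely, the difficulty is to show that when $\alpha = -q/p$ and $\beta$ is \emph{not} of the special form \eqref{eq:quad3-2} or \eqref{eq:quad3-3} and the linear relation \eqref{eq:quad3-1} fails, one can always find $x$ with $[f_\alpha,f_\beta](x) = -1$; this requires a careful accounting of which residues of $\floor{px}$ are actually attainable simultaneously with a prescribed value of $\floor{\beta x}$, an issue governed by the continued-fraction expansion of $p\beta$. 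I expect this to require splitting into subcases according to whether $p\beta$ is rational, and within the rational case according to the size of its denominator relative to $p$ and $q$. The mixed-sign Theorem \ref{thm:mixed-sign} and the $\alpha\beta$-symmetry of the problem (the commutator is antisymmetric under swapping $\alpha \leftrightarrow \beta$) will be used repeatedly to cut the number of subcases in half. The remaining verification — that families (ii) and (iii) are disjoint from, or overlap with, family (i) in exactly the claimed way, and that the stated inequalities $0 < m/p + n/q < 1$ and $r \ge 2$ are sharp — is routine bookkeeping once the main estimates are in place.
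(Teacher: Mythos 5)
First, note that this paper does not actually prove Theorem \ref{thm:negative}: the proof is explicitly deferred to Part II \cite{LagR:2018b}, and the only hint given here is that the Part II argument will use ``strict rounding functions'' analogous to the rounding-function machinery of Section \ref{sec:rounding}. So there is no in-paper proof to match your proposal against; it must be judged on its own merits, and as it stands it is a strategy outline rather than a proof. There is a genuine gap, and it sits exactly where you flag it yourself: the entire derivation of conditions (ii) and (iii) is missing. Nothing in the proposal explains where the parameter $r\geq 2$, the specific formula $\beta = -\frac{1}{p}\bigl(1+\frac{1}{r}(\frac{m}{p}+\frac{n}{q}-1)\bigr)^{-1}$, or the Frobenius-type constraint $0<\frac{m}{p}+\frac{n}{q}<1$ would come from; saying that (ii) and (iii) are ``created by the boundary-correction terms'' names the phenomenon without supplying the argument. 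Likewise, sufficiency of family (i) is asserted via ``the indicator terms cancel favorably,'' which is precisely the computation that needs to be done, and the necessity direction for rational $\alpha=-q/p$ with $\beta$ outside the listed families is conceded to be an unresolved ``main obstacle.''

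There is also a technical misstep in the reduction itself. Writing $\alpha=-a$, $\beta=-b$ and using $\floor{-t}=-\ceil{t}$, the negative-dilation inequality becomes $\floor{a\ceil{bx}}\geq\floor{b\ceil{ax}}$, or after $x\mapsto -x$, $\ceil{b\floor{ax}}\geq\ceil{a\floor{bx}}$. The discrepancy between this and the positive-dilation inequality $\floor{a\floor{bx}}\geq\floor{b\floor{ax}}$ is governed by $\ceil{y}-\floor{y}=\mathbf{1}[y\notin\ZZ]$, which equals $1$ for \emph{generic} $x$, not on a measure-zero set; your framing that ``the positive-parameter floor inequality holds except possibly on a measure-zero set where the boundary indicator terms spoil it'' has the logic inverted, and this matters because the systematic (not exceptional) $\pm1$ shift is exactly what makes the negative classification differ so substantially from the positive one. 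Finally, the claim that the antisymmetry $[f_\alpha,f_\beta]=-[f_\beta,f_\alpha]$ ``cuts the subcases in half'' is of limited use here, since swapping $\alpha$ and $\beta$ converts the nonnegativity condition into a nonpositivity condition, and the classification in Theorem \ref{thm:negative} is visibly not symmetric in $\alpha$ and $\beta$. To turn the proposal into a proof you would need, at minimum, an analogue of the lattice/torus disjointness criteria of Propositions \ref{lem:diagonal-pos} and \ref{prop:torus-pos} adapted to the ceiling-of-floor inequality, and then the case analysis producing (ii) and (iii) explicitly.
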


These  classification theorems describe 
the solution set $S$ 
as a countable union of  real semi-algebraic sets  having dimensions $2$, $1$, or $0$. 
The mixed dilation case  consists of  a dimension $2$ component
($\alpha<0$ and $\beta>0$).  
The positive  dilation cases and negative dilation cases of $S$ both  contain
 $1$-dimensional families of solutions, which 
are each parts of real algebraic curves, consisting either
of straight half-lines or  arcs of rectangular hyperbolas. 
In the positive dilation case these families cover $S$. 
In the negative dilation case there is an additional family  
of $1$-dimensional curves (case {\em(ii)} of Theorem \ref{thm:negative}), 
which consist of finite length vertical line segments 
(of varying length) at all rational values of $\alpha$.
Finally, the  negative dilation case has  in addition   
a countable number of $0$-dimensional components (case {\em(iii)} of Theorem \ref{thm:negative}).
These have rational coordinates, so we term  these points ``sporadic rational solutions.''

Figure \ref{fig:solutions} gives a schematic 
plot of these solutions. 

\begin{figure}[h]
\begin{tikzpicture}[scale=0.8]
  \draw[->] (-8.6,0) -- (6.4,0) node[right] {$\alpha$};
  \draw[->] (0,-6.4) -- (0,6.4) node[above] {$\beta$};
  \draw[scale=4] (1,0.05) -- (1,-0.05) node[below] {1};
  \draw[scale=4] (0.05,1) -- (-0.05,1) node[left] {1};
  
  \fill[scale=4,gray,nearly transparent] (0,0) -- (0,1.5) -- (-2.1,1.5) -- (-2.1,0) -- cycle;
  
  \draw[xscale=4,yscale=4,domain=-1.6:1.6,smooth,variable=\x, color_a] plot ({\x},{\x});
  \foreach \m in {1/2,1/3,1/4,1/5,1/6,1/7,1/8}{
    \draw[xscale=4,yscale=4,domain=0:1.6,smooth,variable=\x, color_a] plot ({\x*\m},{\x}); }
    
  \foreach \m in {1/2,1/3,1/4,1/5,1/6,1/7,1/8}{
  \draw[xscale=4,yscale=4,domain=-2.1:0,smooth,variable=\x, color_a] plot ({\x},{\x*\m}); }
  
  \foreach \x in {1,1/2,1/3,1/4,1/5,1/6,1/7,1/8}{
  \draw[xscale=4,yscale=4,domain=0:1.6,smooth,variable=\y, color_b]  plot ({\x},{\y}); }
  
  \foreach \x in {1,2}{
  \draw[xscale=4,yscale=4,domain=-1:0,smooth,variable=\y, color_b]  plot ({-\x},{\y}); }
  \foreach \x in {1/2,3/2}{
  \draw[xscale=4,yscale=4,domain=-1/2:0,smooth,variable=\y, color_b]  plot ({-\x},{\y}); }
  \foreach \x in {1/3,2/3,4/3,5/3}{
  \draw[xscale=4,yscale=4,domain=-1/3:0,smooth,variable=\y, color_b]  plot ({-\x},{\y}); }
  \foreach \x in {1/4,3/4,5/4,7/4}{
  \draw[xscale=4,yscale=4,domain=-1/4:0,smooth,variable=\y, color_b]  plot ({-\x},{\y}); }
  \foreach \x in {1/5,2/5,3/5,4/5,6/5,7/5,8/5,9/5}{
  \draw[xscale=4,yscale=4,domain=-1/5:0,smooth,variable=\y, color_b]  plot ({-\x},{\y}); }
  \foreach \x in {1/6,5/6,7/6,11/6}{
  \draw[xscale=4,yscale=4,domain=-1/6:0,smooth,variable=\y, color_b]  plot ({-\x},{\y}); }
  \foreach \x in {1/7,2/7,3/7,4/7,5/7,6/7,8/7,9/7,10/7,11/7,12/7,13/7}{
  \draw[xscale=4,yscale=4,domain=-1/7:0,smooth,variable=\y, color_b]  plot ({-\x},{\y}); }
  
  \foreach \x in {23/11}{
  \draw[xscale=4,yscale=4,domain=-1/11:0,smooth,variable=\y, color_b]  plot ({-\x},{\y});}
  
  \foreach \s in {4}{  
    \foreach \y in {4/3,6/5,8/7,10/9,12/11,14/13,16/15,18/17}{ 
      \node[circle,inner sep=0.6pt,fill,color_b] at (-2*\s,-\y*\s) {}; }
      
    \foreach \y in {     9/14,  
                         9/16, 
                    6/11, 9/17, 12/23, 15/29 
      }{ \node[circle,inner sep=0.5pt,fill,color_b] at (-3/2*\s,-\y*\s) {};  }
      
    \foreach \y in { 4/9,      8/21 
                    ,4/11,6/17,8/23,10/29 
      }{ \node[circle,inner sep=0.5pt,fill,color_b] at (-2/3*\s,-\y*\s) {};  }
      
    \foreach \y in { 8/15,      16/39   
                    ,           16/42   
                    ,8/19,12/31,16/43   
                    ,8/21                
                    ,     12/34          
                    ,8/23                
      }{ \node[circle,inner sep=0.5pt,fill,color_b] at (-4/3*\s,-\y*\s) {};  }
      
    \foreach \y in {                        25/63, 35/93 
                     , 10/21,        20/51, 25/66      
                     , 10/23, 15/38, 20/53, 25/68, 30/83  
                     ,        15/41     
                     , 10/29, 15/44    
      }{ \node[circle,inner sep=0.5pt,fill,color_b] at (-5/3*\s,-\y*\s) {};  }
      
    \foreach \y in {      9/28   
                    ,6/19,9/31   
                    ,     9/32   
                    ,6/23,9/35   
      }{ \node[circle,inner sep=0.5pt,fill,red] at (-3/4*\s,-\y*\s) {};  }
      
    \foreach \y in {      15/44,   
                    10/29,15/49,   
                          15/52,  
                    10/33,15/53,  
                    10/39,15/59   
      }{ \node[circle,inner sep=0.5pt,fill,red] at (-5/4*\s,-\y*\s) {};  }
      
    \foreach \jr in {24/5   
                    ,20/3,20/6   
                    ,17/2,17/3   
                    ,3/2           
                    ,    2/3, 2/5   
                    ,1/2       
      }{ \node[circle,inner sep=0.5pt,fill,red] at (-7/4*\s,{-\s*1/(4-\jr/7)}) {};  }
    \foreach \y in {4/15,6/25,9/35   
                   ,4/17   
                   ,4/19   
      }{ \node[circle,inner sep=0.5pt,fill,red] at (-2/5*\s,-\y*\s) {}; }
  }
  
  \draw[xscale=4,yscale=4,domain=0:1.61,smooth,variable=\y, color_c] plot ({\y/(\y+1)},{\y}); 
  \draw[xscale=4,yscale=4,domain=0:1.61,smooth,variable=\y, color_c] plot ({\y/(1+2*\y)},{\y});
  \draw[xscale=4,yscale=4,domain=0:1.6,smooth,variable=\y, color_c] plot ({\y/(1+3*\y)},{\y});
  \draw[xscale=4,yscale=4,domain=0:1.6,smooth,variable=\y, color_c] plot ({\y/(2+\y)},{\y}); 
  
  \draw[xscale=4,yscale=4,domain=-2.1:0,smooth,variable=\x, color_c] plot ({\x},{\x/(1-\x)}); 
  \draw[xscale=4,yscale=4,domain=-2.1:0,smooth,variable=\x, color_c] plot ({\x},{\x/(1-2*\x)});
  \draw[xscale=4,yscale=4,domain=-2.1:0,smooth,variable=\x, color_c] plot ({\x},{\x/(1-3*\x)});
  \draw[xscale=4,yscale=4,domain=-2.1:0,smooth,variable=\x, color_c] plot ({\x},{\x/(2-\x)}); 
  
\end{tikzpicture}
\caption{All solutions $S$ 
to the nonnegative commutator relation $[f_\alpha,f_\beta]\geq 0$.}
\label{fig:solutions}
\end{figure}

The structure of $S$  exhibits new phenomena  
compared to the set $S_0$ of commuting dilations characterized in 
Theorem \ref{thm:commuting}.  
We may recover  $S_0$ from the set $S$ by intersecting $S$ with its reflection across the line of slope one through the origin.

%
%

\subsection{The set  $S$ is closed}

We  deduce a topological property of the nonnegative commutator relation
from the  classification theorems.

\begin{thm}[Closed Set Property of $S$]
\label{thm:closed}
The set $S$ of all pairs of dilation factors $(\alpha, \beta)$ which satisfy the nonnegative commutator inequality ${[f_\alpha, f_\beta]\geq 0 }$,
is a closed subset of $\RR^2$.
\end{thm}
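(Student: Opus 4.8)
\noindent\emph{Proof plan.} The plan is to read the closedness of $S$ directly off the three classification theorems by controlling limits of sequences in $S$. Decompose $S = S_{\mathrm{ax}} \cup S_{\mathrm{mx}} \cup S_+ \cup S_-$, where $S_{\mathrm{ax}}$ is the union of the two coordinate axes, $S_{\mathrm{mx}} = \{(\alpha,\beta) : \alpha<0<\beta\}$ is the mixed-sign part of $S$ (Theorem~\ref{thm:mixed-sign}), $S_+ = S\cap\{\alpha,\beta>0\}$ is cut out by the Diophantine equation \eqref{eq:positive}, and $S_- = S\cap\{\alpha,\beta<0\}$ is described by Theorem~\ref{thm:negative}. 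Given $(\alpha_k,\beta_k)\in S$ with $(\alpha_k,\beta_k)\to(\alpha_*,\beta_*)$, by the pigeonhole principle one of the four sets contains $(\alpha_k,\beta_k)$ for infinitely many $k$; pass to that subsequence. The set $S_{\mathrm{ax}}$ is closed, and $\overline{S_{\mathrm{mx}}} = \{\alpha\le 0,\ \beta\ge 0\}$ meets $\{\alpha\beta\neq 0\}$ exactly in $S_{\mathrm{mx}}$ itself, so in those two cases $(\alpha_*,\beta_*)\in S$ at once. Moreover, whenever $(\alpha_*,\beta_*)$ lies on a coordinate axis we are done since $S_{\mathrm{ax}}\subseteq S$. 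Hence it remains to treat a subsequence in $S_+$ or $S_-$ whose limit has $\alpha_*\beta_*\neq 0$; then $\alpha_*,\beta_*$ necessarily share the common sign of the terms.

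The uniform mechanism I would use is that the defining arithmetic relations force the attached nonnegative integer parameters to stay bounded once the limit is kept away from the axes, after which a further subsequence makes them constant and the relation passes to the limit by continuity. For $S_+$, rewrite \eqref{eq:positive} as $m\beta + n = \beta/\alpha$; since $\beta_k\to\beta_*>0$ the right-hand side is bounded, so the $m_k,n_k$ are bounded, and on a subsequence where $(m_k,n_k)$ is a fixed pair (still not $(0,0)$), \eqref{eq:positive} holds for $(\alpha_*,\beta_*)$. The identical computation handles case~\ref{it:quad3-a} of Theorem~\ref{thm:negative}, rewriting \eqref{eq:quad3-1} as $m\alpha - n = -\alpha/\beta$ and using $\alpha_k\to\alpha_*<0$. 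For case~\ref{it:quad3-b}, the sandwich $-1/p\le\beta\le 0$ together with $\beta_k\to\beta_*<0$ forces $p_k$ bounded, hence $q_k=-p_k\alpha_k$ bounded; on a subsequence with $(p_k,q_k)$ constant and coprime, the \emph{closed}-interval condition \eqref{eq:quad3-2} passes to the limit.

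The main obstacle is case~\ref{it:quad3-c} of Theorem~\ref{thm:negative}, the sporadic rational solutions. The key first observation is that, because $0<\frac{m}{p}+\frac{n}{q}<1$ and $r\ge 2$, the inner factor of \eqref{eq:quad3-3} lies in the fixed interval $(1/2,1)$, so $|\beta|$ lies strictly between $1/p$ and $2/p$; hence $\beta_k\to\beta_*<0$ forces $p_k$ bounded, then $q_k$ bounded, and then the inequalities $m_k<p_k$, $n_k<q_k$ make $m_k,n_k$ bounded as well. Passing to a subsequence on which $p,q,m,n$ are all constant, the only parameter that can still run to infinity is $r_k$. If $r_k$ stays bounded, a further subsequence makes $r$ constant ($\ge 2$) and $(\alpha_k,\beta_k)$ eventually constant, exhibiting $(\alpha_*,\beta_*)$ as a point of type~\ref{it:quad3-c}. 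If $r_k\to\infty$, the inner factor tends to $1$, so $\beta_k\to -1/p$ while $\alpha_k=-q/p$; thus $(\alpha_*,\beta_*)=(-q/p,\,-1/p)$, which lies in $S$ by case~\ref{it:quad3-b}, being the left endpoint $\beta=-1/p$ of the corresponding vertical segment. This degeneration of the $0$-dimensional sporadic family onto the boundary of the $1$-dimensional segment family is the delicate step, and it is precisely where the shape of the bound implicit in \eqref{eq:quad3-3} and the closedness of the interval in \eqref{eq:quad3-2} are both essential. Running through all the cases shows every convergent sequence in $S$ has limit in $S$, so $S$ is closed.
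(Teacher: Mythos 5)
Your proof is correct, but it follows a genuinely different route than the paper. First, note that in this paper the authors prove only the restriction of the closure theorem to the region $\RR^2_{nn}=\{\alpha\ge 0 \text{ or } \beta\ge 0\}$ (this is Theorem~\ref{thm:closed2}); the negative-dilations case is deferred to Part II. Second, and more substantively, for positive dilations the paper does not use the arithmetic classification of Theorem~\ref{thm:positive} at all: instead it invokes the geometric lattice-disjointness criterion of Proposition~\ref{lem:diagonal-pos} to write $S_I^{\um,\vm}$ as the complement in the open first quadrant of the countable union of open sets $\cD^{(m,n)}=\{(u,v):(mu,nv)\in\cD\}$, so closedness is immediate and in particular does not require the (harder) necessity direction of the classification. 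Your argument instead reads closedness off the equation $m\alpha\beta+n\alpha=\beta$ via a boundedness-and-subsequence argument: this is valid, and uniform in the sense that it extends cleanly to the three cases of the negative classification (your treatment of case~\ref{it:quad3-c}, with the dichotomy $r_k$ bounded versus $r_k\to\infty$ and the sporadic points degenerating onto the endpoints of the vertical segments of case~\ref{it:quad3-b}, is the genuinely delicate part and is carried out correctly). The trade-offs: the paper's route is shorter and avoids the necessity direction entirely, but only for positive dilations; your route uses the full classification but handles all four quadrants in a single uniform style. One small slip to fix: you say the ``inner factor'' $1+\frac{1}{r}\left(\frac{m}{p}+\frac{n}{q}-1\right)$ of \eqref{eq:quad3-3} lies in $(1/2,1)$, hence $|\beta|\in(1/p,\,2/p)$; this is right as written for the non-inverted expression, but be explicit that you mean that factor and not its reciprocal (which lies in $(1,2)$), since the subsequent bound on $p_k$ depends on the strict upper bound $|\beta|<2/p$.
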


\noindent  The property that $S$ is a closed set is not unexpected but is also not obvious because the maps $f_{\alpha}, f_{\beta}$ are discontinuous functions of $x$. 
We deduce it only  as a consequence of  the complete classification of the solution set $S$ given in Theorems \ref{thm:positive} and \ref{thm:negative}.

%
%

\subsection{Preorder on $\RR^*$ induced by $S$}

A  significant  fact  established during the proofs is a transitivity property
of the nonnegative commutator relation, 
which  encodes  a compatibility property of commutators of three pairs of functions.
This property implies we have an induced preorder on the set of nonzero real numbers,
which is of independent interest.  
It was pointed out to us  by David Speyer as following from  our classification arguments. 

%
%
\begin{thm}[Nonnegative commutator transitivity]
\label{thm:poset} 
The  nonnegative commutator relation is transitive on non-zero dilated floor functions, meaning that for non-zero dilation factors $\alpha, \beta, \gamma$, 
\begin{equation}\label{eq:preorder1}
[f_\alpha, f_\beta] \geq 0 \quad\text{and}\quad [f_\beta, f_\gamma] \geq 0 
\qquad\text{imply that}\qquad [f_\alpha, f_\gamma] \geq 0 .
\end{equation}
\end{thm}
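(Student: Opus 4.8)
The plan is to reduce the transitivity statement to a case analysis on the signs of $\alpha,\beta,\gamma$, and in each case either invoke the classification theorems directly or dispatch the case by a sign-based argument. First I would record the sign constraints forced by the hypotheses. By Theorem~\ref{thm:mixed-sign}, if $[f_\alpha,f_\beta]\ge 0$ with $\alpha,\beta$ of opposite sign then necessarily $\alpha<0<\beta$; and $[f_\beta,f_\gamma]\ge 0$ with $\beta,\gamma$ of opposite sign forces $\beta<0<\gamma$. These two cannot both hold, so at most one of the two inequalities is ``mixed.'' This leaves a short list of sign patterns for $(\sgn\alpha,\sgn\beta,\sgn\gamma)$ to check: the all-positive case, the all-negative case, and the mixed patterns $(-,+,+)$, $(-,-,+)$, $(+,+,-)$-type configurations. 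Most of the mixed patterns are handled immediately: for instance, if $\alpha<0$ and $\gamma>0$ then $[f_\alpha,f_\gamma]\ge0$ holds outright by Theorem~\ref{thm:mixed-sign}(a), regardless of $\beta$; and any pattern forcing $\alpha>0>\gamma$ is incompatible with the hypotheses once the sign restrictions above are imposed. One should also treat the degenerate possibility that two of $\alpha,\beta,\gamma$ coincide or that the ``exceptional'' reciprocal-integer relations of Theorem~\ref{thm:commuting} intervene, but these are consistent with the general argument.

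The substantive content is therefore the two same-sign cases. For positive dilations, Theorem~\ref{thm:positive} says $[f_\alpha,f_\beta]\ge0$ iff there exist integers $m,n\ge0$, not both zero, with $m\alpha\beta+n\alpha=\beta$, equivalently $\beta = \alpha/(1-m\alpha)$ when one massages the relation — more usefully, dividing by $\alpha\beta$ gives $\tfrac1\alpha - \tfrac1\beta = -m + n\cdot\tfrac{1}{\beta}\cdot\text{(something)}$; the clean way is to rewrite \eqref{eq:positive} as $\frac{1}{\alpha} = \frac{1}{\beta}\cdot\frac{1}{n}\bigl(1 - m\beta\bigr)$ in the case $n\ge1$, and separately handle $n=0$ (which gives $m\alpha\beta=\beta$, i.e. $\alpha=1/m$). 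The key step is to show that the relation ``$\exists\,m,n$'' composes: given $m_1\alpha\beta+n_1\alpha=\beta$ and $m_2\beta\gamma+n_2\beta=\gamma$, I would solve the first for $\beta$ in terms of $\alpha$, substitute into the second, clear denominators, and verify that the resulting constraint between $\alpha$ and $\gamma$ can be put in the form $m_3\alpha\gamma+n_3\alpha=\gamma$ with $m_3,n_3\ge0$ integers not both zero — the nonnegativity of $m_3,n_3$ is exactly where the fact that $\alpha,\beta,\gamma>0$ gets used, since it pins down signs of the intermediate quantities. The negative-dilation case is analogous but messier: one uses Theorem~\ref{thm:negative}, and now there are three families (i), (ii), (iii) to combine pairwise, so one must check all $3\times3=9$ combinations, showing each lands back in one of the three families. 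This bookkeeping — especially compositions involving family (ii) (the vertical segments) paired with families (i) or (iii) — is the main obstacle, since it requires tracking the coprimality conditions on $p,q$ and the interval constraints simultaneously.

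An alternative, cleaner route I would consider — and which matches the remark that Speyer observed transitivity follows from the classification — is to avoid the pointwise reasoning entirely and argue purely from the geometry of $S$. Fix $\beta$ and consider the ``up-set'' $U_\beta = \{\alpha : [f_\alpha,f_\beta]\ge0\}$ and ``down-set'' $D_\beta = \{\gamma : [f_\beta,f_\gamma]\ge 0\}$; transitivity is the assertion $U_\gamma \supseteq U_\beta$ whenever $\gamma\in D_\beta$, i.e. the slices of $S$ are nested in a manner compatible with the relation. Using the explicit curve descriptions (half-lines through the origin and arcs of rectangular hyperbolas $m\alpha\beta+n\alpha=\beta$), this reduces to elementary monotonicity statements about one-parameter families of hyperbolas, which can be verified by comparing the parameters $(m,n)$. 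I would lead with the pointwise/algebraic substitution argument for the positive case because it is self-contained and short, then either mirror it for the negative case or, if the nine-way check proves unwieldy, fall back on the geometric nesting picture. In either presentation the crux is the same: establishing that the semigroup of admissible parameter pairs $(m,n)$ is closed under the composition law induced by substituting one hyperbola relation into another, with all signs working out because the dilation factors share a sign.
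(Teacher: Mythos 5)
Your proposal goes by a genuinely different and much heavier route than the paper. You reduce transitivity to a sign case analysis and, in the two same-sign cases, invoke the classification theorems (Theorem~\ref{thm:positive} for positive dilations, Theorem~\ref{thm:negative} for negative ones) and verify that the parameter relations compose under substitution. The positive-case algebra you sketch does work out: solving $m_1\alpha\beta+n_1\alpha=\beta$ and $m_2\beta\gamma+n_2\beta=\gamma$ and eliminating $\beta$ gives $m_3=m_1+n_1m_2$, $n_3=n_1n_2$, both nonnegative and not both zero. But this approach has two serious costs. First, the negative-dilation case would require combining the three families of Theorem~\ref{thm:negative} in all nine ways, a substantial bookkeeping task you leave unverified. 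Second and more fundamentally, Theorem~\ref{thm:negative} is only stated here and is proved in Part~II, so your argument would rest on a result not established in this paper, whereas the paper's proof of Theorem~\ref{thm:poset} is explicitly and deliberately independent of both classification theorems.

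The paper's proof is a one-line observation that makes all of this unnecessary. Substituting $x=y/(\alpha\beta)$ (a bijection of $\RR$ for nonzero $\alpha\beta$) turns the condition $[f_\alpha,f_\beta]\geq 0$ for all $x$ into the equivalent ``separated'' form
\begin{equation*}
\bfloor{\alpha\bfloor{\tfrac{1}{\alpha}y}} \;\geq\; \bfloor{\beta\bfloor{\tfrac{1}{\beta}y}} \qquad\text{for all } y\in\RR,
\end{equation*}
in which $\alpha$ appears only on the left and $\beta$ only on the right, each in the same universal expression $y\mapsto\lfloor\delta\lfloor y/\delta\rfloor\rfloor$. Transitivity of $[\cdot,\cdot]\geq 0$ is then immediate from transitivity of $\geq$ on $\RR$: chain the two separated inequalities through the $\beta$-term. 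This is the core idea you are missing; your geometric ``nested slices'' alternative still routes through the classification and does not capture it. Once you see the separation trick, no sign casework, no coprimality or interval tracking, and no appeal to Theorems~\ref{thm:positive} or~\ref{thm:negative} is needed.
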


This transitivity property  is a special property of the class of dilated floor functions; 
in general  one can  find examples of nondecreasing functions
$f, g, h$ on the real line satisfying  $[f, g](x)  \ge 0$ and $[g, h](x) \ge 0$, 
for which  $[f, h](x) \ge 0$ does not hold.

It follows from transitivity that this relation determines a preorder on the set of nonzero real numbers.
This preorder is not a partial order; 
in particular all the elements
$\{ \frac{1}{n}: \, n \ge 1\}$ are equivalent in  this order. 
It induces a partial order by identifying  equivalent elements.
The induced partial order  extends the divisibility relation on positive integers.
The  elements in the nontrivial equivalence class are all positive, 
so the preorder is already a  partial order when restricted to negative values of $\alpha$.

The proof of the preorder property \eqref{eq:preorder1}
does not require establishing the detailed classification results in
Theorems \ref{thm:positive} and Theorem \ref{thm:negative}; it is established in Section \ref{sec:rounding}.

%
%
%
\section{Main results: positive dilations}
\label{sec:2}
\setcounter{equation}{0}

This paper determines when $[f_\alpha,f_\beta]\geq 0$ holds for positive dilations and mixed sign dilations.
It proves the closed set property for the solution set $S$ on the closed subset of $\RR^2$ where at
least one of $\alpha \ge 0$ or $\beta\ge 0$ holds. 
It proves the preorder theorem for all nonzero dilation parameters. 
The main part of the analysis concerns the positive dilations case,
where  both $\alpha, \beta>0$.
The remainder of this section states further  results for positive dilations,
and gives the organization of the rest of the paper.

%
%

\subsection{Geometric description of $S$: positive dilations} 
\label{subsec:21}

We give a more detailed geometric description of the solutions $S$ in
the positive  dilations case in $(\alpha, \beta)$ coordinates.

We picture the solution set $S$ for positive dilations
in $(\alpha, \beta)$-coordinates  in Figure  \ref{fig:ab-coord}.

\begin{figure}[h]
\begin{center}
\begin{tikzpicture}[scale=.50]
  \draw[->] (0,0) -- (10,0) node[right] {$\alpha$};
  \draw[->] (0,0) -- (0,10) node[above] {$\beta$};
  \draw (8,0.5) -- (8,-0.5) node[below] {$1$};
  \draw (0.3,2) -- (-0.3,2) node[left] {$1$};
  \draw[xscale=8,yscale=2,domain=0:1.2,smooth,variable=\x, blue] plot ({\x},{\x});
  \draw[xscale=8,yscale=2,domain=0:2.4,smooth,variable=\x, blue] plot ({\x/2},{\x});
  \draw[xscale=8,yscale=2,domain=0:3.6,smooth,variable=\x, blue] plot ({\x/3},{\x});
  \draw[xscale=8,yscale=2,domain=0:4.8,smooth,variable=\x, blue] plot ({\x/4},{\x});
  \draw[xscale=8,yscale=2,domain=0:4.8,smooth,variable=\x, blue] plot ({\x/5},{\x});
  \draw[xscale=8,yscale=2,domain=0:4.8,smooth,variable=\x, blue] plot ({\x/6},{\x});
  \draw[xscale=8,yscale=2,domain=0:4.8,smooth,variable=\x, blue] plot ({\x/7},{\x});
  \draw[xscale=8,yscale=2,domain=0:4.8,smooth,variable=\x, blue] plot ({\x/8},{\x});
  \draw[xscale=8,yscale=2,domain=0:4.8,smooth,variable=\y, red]  plot ({1},{\y});
  \draw[xscale=8,yscale=2,domain=0:4.8,smooth,variable=\y, red]  plot ({1/2},{\y});
  \draw[xscale=8,yscale=2,domain=0:4.8,smooth,variable=\y, red]  plot ({1/3},{\y});
  \draw[xscale=8,yscale=2,domain=0:4.8,smooth,variable=\y, red]  plot ({1/4},{\y});
  \draw[xscale=8,yscale=2,domain=0:4.8,smooth,variable=\y, red]  plot ({1/5},{\y});
  \draw[xscale=8,yscale=2,domain=0:4.8,smooth,variable=\y, red]  plot ({1/6},{\y});
  \draw[xscale=8,yscale=2,domain=0:4.8,smooth,variable=\y, red]  plot ({1/7},{\y});
  \draw[xscale=8,yscale=2,domain=0:4.8,smooth,variable=\y, red]  plot ({1/8},{\y});
  \draw[xscale=8,yscale=2,domain=0:.83,smooth,variable=\x, green] plot ({\x},{\x/(1-\x)});
  \draw[xscale=8,yscale=2,domain=0:.71,smooth,variable=\x, green] plot ({\x},{2*\x/(1-\x)});
  \draw[xscale=8,yscale=2,domain=0:.454,smooth,variable=\x, green] plot ({\x},{\x/(1-2*\x)});
  \end{tikzpicture}
\end{center}
\caption{Positive dilation solutions of $S$  pictured in $(\alpha,\beta)$-coordinates}
\label{fig:ab-coord}
\end{figure} 

The positive dilation solutions  in  $S$ can be rewritten as  solutions to
\begin{equation}
\label{eq:positive2}
m \alpha +n \frac{\alpha}{\beta} =1.
\end{equation}
with $m, n\ge 0$. 
These solutions comprise three distinct families of curves:
\begin{enumerate}
\item[{\em Case (i-a)}] 
if $m = 0$, then
  \eqref{eq:positive2} cuts out  an oblique line through the origin with integer slope $n$; 
 \item[{\em Case (i-b)}] 
 if $n = 0$, 
 then \eqref{eq:positive2} cuts out  a vertical line  $\alpha=\frac{1}{m}$; 
\item[{\em Case (i-c)}] 
if both $m ,n\geq 1$, then \eqref{eq:positive2}
cuts out a rectangular hyperbola,
which approaches the origin with integer slope $n$,
where it is tangent to one of the $m=0$ solutions,
and which has  vertical asymptote $\alpha= \frac{1}{m}$,
so is tangent at infinity to one of the $n = 0$ solutions.
\end{enumerate}

%
%

\subsection{Symmetries of $S$: positive dilations} 

In the process of proving the classification theorems we 
establish several symmetries of the nonnegative commutator set $S$ in the positive dilation case.

%
%
\begin{thm}[Symmetries of the set $S$: positive dilations]
\label{thm:symmetries-p}
For positive dilations $\alpha, \beta >0$, the set $S$ is mapped into itself under the following symmetries:
\begin{enumerate}[(i)]
\item For any integer $m \ge 1$, if $(\alpha, \beta) \in S$, then $(m\alpha, \beta) \in S$.
\item For any integer $m \ge 1$, if $(\alpha, \beta) \in S$, then $(\frac{1}{m}\alpha, \frac{1}{m}\beta) \in S$.
\end{enumerate}
\end{thm}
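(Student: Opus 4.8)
The plan is to derive both symmetries from the Diophantine characterization of positive‑dilation solutions in Theorem~\ref{thm:positive}: for $\alpha,\beta>0$ one has $(\alpha,\beta)\in S$ exactly when there are integers $a,b\ge 0$, not both zero, with $a\,\alpha\beta+b\,\alpha=\beta$. Thus each claimed symmetry becomes a purely arithmetic problem — from a witnessing pair $(a,b)$ for $(\alpha,\beta)$, exhibit a witnessing pair for the transformed dilation pair. Because these symmetries are also wanted as stepping stones toward the classification, I will in parallel point to a self‑contained functional derivation resting only on the floor identities $\lfloor\tfrac1k\lfloor y\rfloor\rfloor=\lfloor y/k\rfloor=\lfloor\lfloor y\rfloor/k\rfloor$ for positive integers $k$, the scaling $\tfrac\gamma k\,x=\gamma\cdot\tfrac xk$, and the monotonicity of $t\mapsto\lfloor t\rfloor$ and $t\mapsto\lfloor t/k\rfloor$.

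Part (ii) is the quick one. Given $a\,\alpha\beta+b\,\alpha=\beta$, multiplying the target relation $c\cdot\tfrac\alpha m\cdot\tfrac\beta m+d\cdot\tfrac\alpha m=\tfrac\beta m$ by $m$ gives $\tfrac cm\,\alpha\beta+d\,\alpha=\beta$; the choice $c=ma$, $d=b$ reproduces $a\,\alpha\beta+b\,\alpha=\beta$, and $(ma,b)$ are nonnegative integers, not both zero (if $a=0$ then $b\ne0$, and otherwise $ma\ne0$), so $(\tfrac1m\alpha,\tfrac1m\beta)\in S$. The functional proof is just as short: with $y=x/m$ one has $\lfloor\tfrac\beta m x\rfloor=\lfloor\beta y\rfloor$ and then, by $\lfloor\tfrac1m\lfloor z\rfloor\rfloor=\lfloor z/m\rfloor$,
\[
\Big\lfloor \tfrac{\alpha}{m}\big\lfloor \tfrac{\beta}{m}x\big\rfloor\Big\rfloor
=\Big\lfloor \tfrac1m\big\lfloor \alpha\lfloor\beta y\rfloor\big\rfloor\Big\rfloor,
\qquad
\Big\lfloor \tfrac{\beta}{m}\big\lfloor \tfrac{\alpha}{m}x\big\rfloor\Big\rfloor
=\Big\lfloor \tfrac1m\big\lfloor \beta\lfloor\alpha y\rfloor\big\rfloor\Big\rfloor ,
\]
and the hypothesis $\lfloor\alpha\lfloor\beta y\rfloor\rfloor\ge\lfloor\beta\lfloor\alpha y\rfloor\rfloor$ is preserved by the nondecreasing map $t\mapsto\lfloor t/m\rfloor$.

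For part (i) the target relation for $(m\alpha,\beta)$ reads $c\,(m\alpha)\beta+d\,(m\alpha)=\beta$, i.e. $(mc)\,\alpha\beta+(md)\,\alpha=\beta$, so one must extract the factor $m$ from a witnessing solution $(a,b)$ of $a\,\alpha\beta+b\,\alpha=\beta$, producing nonnegative integers $c,d$, not both zero. The plan is to do this family by family along the decomposition of Section~\ref{subsec:21} — oblique lines ($a=0$), vertical lines ($b=0$), rectangular hyperbolas ($a,b\ge1$) — using the explicit rational parametrization of each family (a slope, the abscissa of a vertical asymptote, or both) to locate the witness and to track how $m$ divides into the relevant component, all the while respecting the coprimality and range conventions of that parametrization.

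The step I expect to be the main obstacle is precisely this extraction of the factor $m$ from a witnessing solution, carried out consistently across the three families — equivalently, checking that the scaling $\alpha\mapsto m\alpha$ carries a point parametrized by $(a,b)$ in Section~\ref{subsec:21} to another legitimately parametrized point. Part (ii) and the degenerate families in (i) are routine once Theorem~\ref{thm:positive} is in hand; the real content sits on the hyperbola family, where both $a$ and $b$ are positive and the interaction of the scaling with the two‑parameter arithmetic must be pinned down with care.
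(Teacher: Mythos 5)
Your Part (ii) is correct both ways: the witness $(ma,b)$ works, and your direct functional argument via $\lfloor\tfrac1m\lfloor z\rfloor\rfloor=\lfloor z/m\rfloor$ is a valid, self-contained alternative. For the record, the paper proves both symmetries not from Theorem \ref{thm:positive} but from the rounding-function criterion of Proposition \ref{prop:rounding-pos} combined with the ordering inequalities of Proposition \ref{prop:rounding-relations}, precisely so that the symmetries do not depend on the classification; your primary route through Theorem \ref{thm:positive} is logically admissible (the paper's proof of that theorem does not invoke Theorem \ref{thm:symmetries-p}), but it inverts the paper's order of deduction, whereas your functional argument for (ii) avoids the issue entirely.

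Part (i) is where the real problem sits, and it is not one you can fix by a more careful case analysis over the three families of Section \ref{subsec:21}: the statement as printed is false, and the ``extraction of the factor $m$'' that you flag as the main obstacle is genuinely impossible. Concretely, $(1,1)\in S$ (witness $a=0$, $b=1$), but $(2,1)\notin S$: at $x=\tfrac12$ one has $\lfloor 2\lfloor \tfrac12\rfloor\rfloor=0<1=\lfloor\lfloor 2\cdot\tfrac12\rfloor\rfloor$, and correspondingly $2c+2d=1$ has no solution in nonnegative integers. The theorem statement carries a typo; the symmetry the paper actually proves (and the one required for Remark \ref{rmk:partial-order} and visible in Figure \ref{fig:ab-coord}) is $(\alpha,\beta)\mapsto(\alpha,m\beta)$. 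With that correction your Diophantine approach closes in one line --- the witness for $(\alpha,m\beta)$ is $(c,d)=(a,mb)$ --- with no family-by-family analysis needed. The paper instead obtains it from $\lceil n\rceil_\alpha\le\lceil n\rceil_\beta\le\lceil n\rceil_{m\beta}$ for all $n\in\ZZ$, the second inequality being Proposition \ref{prop:rounding-relations}(b).
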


There is an additional symmetry of a different type on the positive solutions of $S$,
under the birational transformation 
\begin{equation}\label{eqn:birat1}
(\alpha, \beta) \mapsto ( \frac{\alpha}{\beta}, \frac{1}{\beta}).
\end{equation}
This  birational  transformation acts as an involution on the open first quadrant of the plane.

%
%
\begin{thm}[First quadrant birational symmetry of the set $S$]
\label{thm:symmetries2}
On the region of  positive dilations,
the  set $S$ is invariant under the birational symmetry \eqref{eqn:birat1}.
That is, if  $\alpha, \beta >0$ and $(\alpha, \beta) \in S$, then 
$( \frac{\alpha}{\beta}, \frac{1}{\beta}) \in S$.
\end{thm}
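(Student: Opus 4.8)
The plan is to reduce Theorem~\ref{thm:symmetries2} to the already-available characterization of the positive dilation part of $S$ in Theorem~\ref{thm:positive}, rather than attempting any direct manipulation of the commutator $[f_\alpha,f_\beta]$ under the change of variables \eqref{eqn:birat1}. By Theorem~\ref{thm:positive}, for $\alpha,\beta>0$ we have $(\alpha,\beta)\in S$ if and only if there exist integers $m,n\ge 0$, not both zero, with $m\alpha\beta+n\alpha=\beta$; equivalently, dividing by $\beta$ (valid since $\beta>0$), if and only if $m\alpha+n\frac{\alpha}{\beta}=1$ as in \eqref{eq:positive2}. So the whole statement becomes an elementary arithmetic claim: if $(\alpha',\beta')=(\frac{\alpha}{\beta},\frac{1}{\beta})$, show that the solvability of $m\alpha'+n\frac{\alpha'}{\beta'}=1$ in nonnegative integers $m,n$ (not both zero) follows from the solvability of $m\alpha+n\frac{\alpha}{\beta}=1$.

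The key computation is to express the transformed quantities. We have $\alpha'=\frac{\alpha}{\beta}$ and $\frac{\alpha'}{\beta'}=\frac{\alpha/\beta}{1/\beta}=\alpha$. Hence the defining equation for $(\alpha',\beta')\in S$, namely $m'\alpha'+n'\frac{\alpha'}{\beta'}=1$, reads
\[
m'\,\frac{\alpha}{\beta}+n'\,\alpha=1.
\]
Comparing with the defining equation for $(\alpha,\beta)\in S$, which is $m\alpha+n\frac{\alpha}{\beta}=1$, we see the two equations are literally the same relation with the roles of the coefficient of $\alpha$ and the coefficient of $\frac{\alpha}{\beta}$ swapped. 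Therefore, given a witness $(m,n)$ for $(\alpha,\beta)\in S$, the pair $(m',n')=(n,m)$ is a witness for $(\alpha',\beta')\in S$: it consists of nonnegative integers, not both zero, and satisfies $n\frac{\alpha}{\beta}+m\alpha=1$, which is exactly $m'\alpha'+n'\frac{\alpha'}{\beta'}=1$. This shows $(\alpha,\beta)\in S\Rightarrow(\frac{\alpha}{\beta},\frac1\beta)\in S$, which is the assertion.

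I would also note in passing that the birational map \eqref{eqn:birat1} is an involution on the open first quadrant: applying it twice sends $(\alpha,\beta)\mapsto(\frac{\alpha}{\beta},\frac1\beta)\mapsto\bigl(\frac{\alpha/\beta}{1/\beta},\frac{1}{1/\beta}\bigr)=(\alpha,\beta)$. Combined with the forward implication just proved, this immediately gives the reverse implication as well, so $S$ is genuinely invariant (not merely mapped into itself) under \eqref{eqn:birat1} on the positive dilation region. One should be slightly careful that the excerpt presents Theorem~\ref{thm:positive} as something proved later (in Sections~\ref{sec:positive-S} and \ref{sec:positive-N}), so for a logically clean writeup one either cites Theorem~\ref{thm:positive} as permitted, or alternatively derives the statement directly from whichever of the two inclusion directions in the proof of Theorem~\ref{thm:positive} has already been established at the point where this symmetry is invoked.

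The main (and essentially only) obstacle is bookkeeping: making sure the substitution $\frac{\alpha'}{\beta'}=\alpha$ is done correctly and that the "not both zero" condition and nonnegativity are transported along with the swap $(m,n)\mapsto(n,m)$. There is no analytic difficulty here at all once Theorem~\ref{thm:positive} is in hand; the content of the symmetry is entirely encoded in the symmetry of the linear Diophantine relation \eqref{eq:positive2} under interchange of its two coefficients, which is exactly what the birational involution realizes geometrically.
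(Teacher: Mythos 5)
Your proof is correct, but it takes a genuinely different route from the paper's. You derive the symmetry as a corollary of the full classification in Theorem~\ref{thm:positive}: since $\alpha' = \alpha/\beta$ and $\alpha'/\beta' = \alpha$, the defining relation $m\alpha + n\frac{\alpha}{\beta} = 1$ for $(\alpha,\beta)\in S$ is carried to the relation for $(\alpha',\beta')$ simply by swapping the witness integers $(m,n)\mapsto(n,m)$; the nonnegativity and not-both-zero conditions transport trivially. The algebra checks out. The paper, by contrast, proves this symmetry \emph{before} and \emph{independently of} Theorem~\ref{thm:positive}, via the lattice disjointness criterion of Proposition~\ref{lem:diagonal-pos}: in the $(\um,\vm)$-coordinates the involution becomes the transposition $(\um,\vm)\mapsto(\vm,\um)$, which reflects the rectangular lattice $\Lambda_{\um,\vm}$ across the diagonal while fixing the enlarged diagonal region $\cD$, so the disjointness condition (P2) is preserved. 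That geometric argument (Lemma~\ref{lem:symm-prop}) is then actually \emph{used} in the paper's proof of the sufficiency half of Theorem~\ref{thm:positive} (Case~(i-a) of Theorem~\ref{thm:pos-sufficient}), which is precisely why the paper cannot afford to prove the symmetry the way you do.

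You correctly anticipate this ordering issue in your final caveat. To be precise about the dependency structure: your proof needs Theorem~\ref{thm:positive} in full, and Theorem~\ref{thm:positive} does not use Theorem~\ref{thm:symmetries2} itself, so there is no circularity; but Theorem~\ref{thm:positive}'s proof does use the weaker interchange symmetry of Lemma~\ref{lem:symm-prop}, so if one wanted to replace the paper's argument with yours wholesale, Lemma~\ref{lem:symm-prop} would still have to be proved separately. The tradeoff is that your argument is shorter and purely algebraic given the classification, whereas the paper's is self-contained, lower-tech, and does double duty as an ingredient in the classification's proof.
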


The set $S$ viewed under a certain change of coordinates is related to disjointness of Beatty sequences, 
as we  discuss next. 
In terms of the change of variables 
$(\um,\vm)= (\frac{1}{\alpha}, \frac{\beta}{\alpha})$, 
the birational transformation \eqref{eqn:birat1}
becomes transposition $(\um,\vm) \mapsto (\vm,\um)$.

%
%
\subsection{Relation to disjoint Beatty sequences: positive dilations}
\label{subsec:beatty}
The  problem of classifying  which positive values $(\alpha, \beta)$ correspond to
a nonnegative commutator $[f_\alpha,f_\beta]$ has  a  close parallel with
the problem of classifying disjoint Beatty sequences, as we explain below. 

We recall the definition of Beatty sequences.
Let $\NN^{+} = \{1,2,\ldots\}$ denote the positive integers. 

\begin{defi}\label{def:44}
Given a positive real number $u$, the {\em Beatty sequence $\sB^{+}(u)$ on $\NN^{+}$} 
 is the set
\[
\sB^{+}(u) := \{ \floor{ n u }: \,  n \in \NN^+\} .
\]
\end{defi}

This definition was motivated by  a problem posed
in the American Mathematical  Monthly 
by Samuel Beatty \cite{Bea26} in 1926, which may be re-stated in the following form.

\begin{thm}[``Beatty's Theorem'']
\label{thm:20} 
Let $u, v$ be positive irrational numbers 
satisfying 
$$
\frac{1}{u} + \frac{1}{v} =1.
$$
Then the Beatty sequences $\sB^{+}(u)$ and $\sB^{+}(v)$
partition the positive integers $\NN^{+}$, i.e.
$$
\sB^{+}(u) \cap \sB^{+}(v) = \emptyset \quad \mbox{and} \quad \sB^{+}(u) \cup \sB^{+}(v) = \NN^{+}.
$$
\end{thm}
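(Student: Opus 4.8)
The plan is to prove Beatty's Theorem by a direct counting argument, comparing, for each integer $N \ge 0$, the number of elements of $\sB^{+}(u)$ and of $\sB^{+}(v)$ that lie in $\{1,\dots,N\}$.

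First I would record the elementary consequences of the hypothesis. Since $u,v>0$ satisfy $\frac1u+\frac1v=1$ and are irrational, neither $\frac1u$ nor $\frac1v$ equals $1$, so both lie strictly between $0$ and $1$; hence $u,v>1$. In particular the map $n\mapsto\floor{nu}$ is strictly increasing on $\NN^{+}$ (consecutive values differ by at least $1$), so the elements of $\sB^{+}(u)$ are enumerated without repetition, and they all lie in $\NN^{+}$; the same holds for $\sB^{+}(v)$. This lets us count set elements by counting indices $n$.

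Next, for $N\ge 0$ set $A(N):=\abs{\sB^{+}(u)\cap\{1,\dots,N\}}$. Because $u$ is irrational, $nu\notin\ZZ$ for all $n\ge 1$ and $(N+1)/u\notin\QQ$, so there are no boundary cases: $\floor{nu}\le N \iff nu<N+1 \iff n<(N+1)/u$, and therefore $A(N)=\floor{(N+1)/u}$. Symmetrically $B(N):=\abs{\sB^{+}(v)\cap\{1,\dots,N\}}=\floor{(N+1)/v}$. Now add these: writing $x=(N+1)/u$, the relation $\frac1u+\frac1v=1$ gives $(N+1)/v=(N+1)-x$, and since $x\notin\ZZ$ we have $\floor{x}+\floor{(N+1)-x}=(N+1)-1$. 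Hence
\[
A(N)+B(N)=N \qquad\text{for every integer } N\ge 0,
\]
the case $N=0$ being $0+0=0$ as $u,v>1$.

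Finally I would convert this identity into the partition statement by taking first differences in $N$. For $N\ge 1$, the quantity $A(N)-A(N-1)$ lies in $\{0,1\}$ and equals $1$ exactly when $N\in\sB^{+}(u)$ (the only new candidate for membership in $\{1,\dots,N\}$ compared with $\{1,\dots,N-1\}$ is $N$ itself, and there is no repetition); likewise $B(N)-B(N-1)=1$ exactly when $N\in\sB^{+}(v)$. Since $\bigl(A(N)+B(N)\bigr)-\bigl(A(N-1)+B(N-1)\bigr)=1$, exactly one of these two increments equals $1$. Letting $N$ range over $\NN^{+}$ yields simultaneously $\sB^{+}(u)\cap\sB^{+}(v)=\emptyset$ and $\sB^{+}(u)\cup\sB^{+}(v)=\NN^{+}$. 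There is no substantive obstacle here, as the theorem is classical; the only point requiring care is the bookkeeping with irrationality, which is precisely what guarantees $\floor{nu}\le N\iff nu<N+1$ with no equality case, that $(N+1)/u\notin\ZZ$, and that $\floor{x}+\floor{k-x}=k-1$ (not $k$) for $x\notin\ZZ$ — dropping irrationality breaks each of these and the conclusion fails.
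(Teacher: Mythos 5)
The paper states this result (Theorem \ref{thm:20}) as a classical fact and does not include a proof, instead citing Beatty's 1926 problem, its 1927 solvers (Ostrowski, Hyslop, Aitken), Uspensky, and Rayleigh; so there is no proof in the paper to compare against. Your counting argument is correct and is the standard textbook proof: irrationality of $u$ forces $u,v>1$, gives strict monotonicity of $n\mapsto\floor{nu}$, rules out equality cases so that $A(N)=\floor{(N+1)/u}$ and $B(N)=\floor{(N+1)/v}$, the identity $\floor{x}+\floor{k-x}=k-1$ for $x\notin\ZZ$ yields $A(N)+B(N)=N$, and taking first differences shows each $N\in\NN^{+}$ lies in exactly one of the two Beatty sequences. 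Each use of irrationality is flagged and is genuinely needed. This fills in a proof the paper deliberately omits, and it is sound.
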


Beatty's problem was solved in 1927  by L. Ostrowski and J. Hyslop, 
and by A. C. Aitken (and not by Beatty),  see \cite{Bea26}. 
The result was  also obtained independently in 1927 by Uspensky \cite{Usp27}.
Moreover,  it  had already been  noted in 1894 by Lord Rayleigh \cite[p.122]{Rayleigh:1894}
in the context of a vibrating string.
A converse to Theorem \ref{thm:20} holds, explicitly noted by Fraenkel \cite[p. 6]{Fra69}:
 the sets $\sB^{+}(u)$ and $\sB^{+}(v)$ partition the
set $\NN^{+}$ only if $u, v$ are irrational and satisfy $\frac{1}{u} + \frac{1}{v} =1$.

The more general question  ``When are two Beatty sequences $\sB^{+}(u)$ and $\sB^{+}(v)$ disjoint?''
was posed and answered by Skolem \cite[Theorem 8]{Sko57} in 1957.
A proof  was given in a  follow-up 1957 result of Bang \cite[Theorem 9]{Ban57},
and another proof was given in  Niven \cite[Theorem 3.11]{Niven:63}.
For  further  generalizations of Beatty's
partition theorem for $\NN^{+}$, consult O'Bryant \cite{OB03}.

\begin{prop}[Skolem-Bang]
\label{prop:disjoint-Beatty}
For $u, v >0$ the Beatty sequences $\sB^{+}(u)$, $\sB^{+}(v)$ are disjoint 
   if and only if both $u$ and $v$ are irrational, and there exist integers $m,n \geq 1$ such that
\begin{equation}\label{eqn:Beatty-disjoint}
\frac{m}{u} + \frac{n}{v} = 1.
\end{equation}
\end{prop}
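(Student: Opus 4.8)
The plan is to prove the two implications by rather different arguments.

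\emph{The ``if'' direction} is a short divisibility computation in the spirit of the Frobenius problem. Suppose $u,v$ are irrational and $\tfrac mu+\tfrac nv=1$ for integers $m,n\ge1$; this forces $u>m\ge1$ and $v>n\ge1$. If positive integers $a,b$ satisfied $\floor{au}=\floor{bv}=k$, write $au=k+\delta$ and $bv=k+\varepsilon$ with $\delta,\varepsilon\in(0,1)$ (both nonzero since $u,v$ are irrational). Then
\[
ma+nb=\frac{m(k+\delta)}u+\frac{n(k+\varepsilon)}v=k\Bigl(\frac mu+\frac nv\Bigr)+\frac{m\delta}u+\frac{n\varepsilon}v=k+\Bigl(\frac{m\delta}u+\frac{n\varepsilon}v\Bigr),
\]
while $0<\tfrac{m\delta}u+\tfrac{n\varepsilon}v<\tfrac mu+\tfrac nv=1$, contradicting $ma+nb\in\ZZ$. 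Hence $\sB^{+}(u)\cap\sB^{+}(v)=\emptyset$.

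\emph{The ``only if'' direction} requires preliminary reductions. Assume $\sB^{+}(u)\cap\sB^{+}(v)=\emptyset$. If $u\le1$ then $\sB^{+}(u)\supseteq\NN^{+}$, which meets every nonempty $\sB^{+}(v)$, so in fact $u,v>1$. If $u=a/b$ in lowest terms (so $a>b\ge1$), then $\floor{(jb)u}=ja$ shows $a\NN^{+}\subseteq\sB^{+}(u)$, and then $\sB^{+}(v)$ also contains a positive multiple of $a$ --- when $v$ is irrational because $\{nv/a\}$ is dense in $[0,1)$, and when $v=c/d$ in lowest terms because $\lcm(a,c)\in c\NN^{+}\subseteq\sB^{+}(v)$; either way disjointness fails. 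Hence $u$ and $v$ are irrational, and we set $\theta=1/u$, $\phi=1/v\in(0,1)$.

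Now comes the core step. Using the standard equivalence $k\in\sB^{+}(u)\iff\{k\theta\}\in(1-\theta,1)$ (which holds because $\theta$ is irrational; here $\{\cdot\}$ is the fractional part), and its analogue for $v$, disjointness says precisely that the torus orbit $z_k=(\{k\theta\},\{k\phi\})$, $k\ge1$, never enters the open box $R=(1-\theta,1)\times(1-\phi,1)\subset\TT^2$. Let $M=\{(a,b)\in\ZZ^2:a\theta+b\phi\in\ZZ\}$ be the lattice of integer relations among $1,\theta,\phi$. Since $\theta$ is irrational, $M$ has rank at most $1$; and if $M=0$ then $\{z_k\}$ is dense in $\TT^2$ by Kronecker's theorem and so meets $R$, a contradiction. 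Thus $M=\ZZ\cdot(p,q)$ with $\gcd(p,q)=1$; put $r=p\theta+q\phi\in\ZZ$. By Pontryagin duality the closure of $\{z_k\}$ is the connected circle $L=\{(x,y)\in\TT^2:px+qy\equiv0\pmod1\}$, so disjointness forces $L\cap R=\emptyset$. Representing $R$ modulo $\ZZ^2$ as the box $B_0=(-\theta,0)\times(-\phi,0)$, on which the linear form $px+qy$ sweeps out an open interval with endpoints among $\{0,-p\theta,-q\phi,-r\}$, the condition $L\cap R=\emptyset$ is exactly that this interval contains no integer. Inspecting the corner $(0,0)$ shows that $p$ and $q$ cannot have opposite signs (else $0$ would be interior to the interval); hence, after replacing $(p,q,r)$ by its negative if needed, $p,q\ge1$, the swept interval is $(-r,0)$, and since $r=p\theta+q\phi$ is then a positive integer the no-integer condition forces $r=1$. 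Therefore $\tfrac pu+\tfrac qv=p\theta+q\phi=1$ with $p,q\ge1$, completing the proof.

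\emph{The main obstacle} is this core step: extracting from the mere existence of an integer relation among $1,\theta,\phi$ the sharp normalized relation $\tfrac pu+\tfrac qv=1$ with $p,q\ge1$. All the force comes from the emptiness of $L\cap R$, exploited through the geometry of the corner box $R$; the two technical points needing care are to identify $\overline{\{z_k\}}$ with the connected circle $L$ (which is why one works with the primitive generator of $M$), and to keep track of open versus closed endpoints of the interval swept by $px+qy$. By comparison the reduction to $u,v>1$ and the irrationality argument are routine.
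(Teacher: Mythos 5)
Your proof is correct. Note that the paper does not itself prove Proposition \ref{prop:disjoint-Beatty}; it cites Skolem, Bang, and Niven for proofs. Your ``only if'' argument is, however, structurally the same as the paper's proof of the necessity direction of the positive-dilations classification (Theorem \ref{thm:pos-necessary}, Case~1, via the torus-subgroup criterion of Proposition \ref{prop:torus-pos}): reduce to an avoidance condition for a cyclic orbit closure in $\TT^2$ against a corner box, identify the closure via the closed-subgroup theorem (and Kronecker when $M=0$) as the circle cut out by a primitive relation $p\theta+q\phi=r$, rule out opposite signs of $(p,q)$ by inspecting the corner, and observe that $r\geq 2$ would place a point in the box, forcing $r=1$. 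In the Beatty setting you sidestep the paper's rational Case~2 (and its appeal to Sylvester duality) because you first establish that $u,v$ must be irrational. Your ``if'' direction, a short arithmetic computation showing $ma+nb$ would be non-integral, is more elementary and self-contained than the paper's sufficiency route through the lattice-disjointness criterion (Proposition \ref{lem:diagonal-pos}) and the hyperbola lemma (Lemma \ref{lem:hyperbola-pos}); both are valid. Two small points worth making explicit in a polished writeup: the one-sided orbit $\{z_k:k\geq 1\}$ has the same closure as the full cyclic subgroup because a compact sub-semigroup of a compact group is a group; and $p,q$ are automatically both nonzero, since $p=0$ or $q=0$ together with $\gcd(p,q)=1$ would force $\phi$ or $\theta$ to be an integer, contradicting $\theta,\phi\in(0,1)$.
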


This result has a strong parallel with the 
conclusion of Theorem \ref{thm:positive}, 
after making the
change of variables $(\um,\vm)= (\frac{1}{\alpha}, \frac{\beta}{\alpha})$.
We call these new variables {\em $(\um,\vm)$-coordinates} for the nonnegative commutator problem.
(The inverse change of variables is $(\alpha, \beta) = (\frac{1}{\um}, \frac{\vm}{\um})$.)

\begin{thm}[Theorem \ref{thm:positive} in $(\um,\vm)$-coordinates]
\label{thm:positive-uv}
Given   $\um, \vm > 0$, the inequality
\[ 
\floor{\frac{1}{\um} \floor{\frac{\vm}{\um} x}} \geq \floor{\frac{\vm}{\um} \floor{\frac{1}{\um} x}} \quad\text{for all }x\in\RR
\]
holds if and only if 
there are integers $m, n\geq 0$, not both zero, such that 
\begin{equation}\label{eq:positive-uv}
\frac{m}{\um} + \frac{n}{\vm} = 1.
\end{equation}
\end{thm}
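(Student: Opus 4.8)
The plan is to obtain Theorem \ref{thm:positive-uv} as an immediate corollary of Theorem \ref{thm:positive} by performing the change of variables $(\alpha,\beta) = (\frac{1}{\um}, \frac{\vm}{\um})$, which is the inverse of the substitution defining $(\um,\vm)$-coordinates. The first step is to observe that the map $\Phi\colon (\alpha,\beta)\mapsto (\frac{1}{\alpha},\frac{\beta}{\alpha})$ restricts to a bijection of the open first quadrant $\{\alpha,\beta>0\}$ onto the open first quadrant $\{\um,\vm>0\}$, with inverse $(\um,\vm)\mapsto(\frac{1}{\um},\frac{\vm}{\um})$; this is clear since $\alpha\mapsto 1/\alpha$ is a self-bijection of $(0,\infty)$ and then $\beta\mapsto\beta/\alpha$ rescales the second coordinate by a fixed positive factor.

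Next I would fix $\um,\vm>0$, set $\alpha=\frac{1}{\um}>0$ and $\beta=\frac{\vm}{\um}>0$, and note that $f_\alpha(x)=\floor{\frac{1}{\um}x}$ and $f_\beta(x)=\floor{\frac{\vm}{\um}x}$, so that the displayed inequality in the theorem is exactly the assertion $[f_\alpha,f_\beta]\geq 0$. By Theorem \ref{thm:positive} this holds if and only if there are integers $m,n\geq 0$, not both zero, with $m\alpha\beta+n\alpha=\beta$. Substituting $\alpha=\frac{1}{\um}$, $\beta=\frac{\vm}{\um}$ and clearing denominators (all quantities involved are positive, so the manipulation is reversible and neither introduces nor destroys solutions) turns this equation into $\frac{m}{\um}+\frac{n}{\vm}=1$ with the same pair $m,n$. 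Chaining the equivalences gives the claim.

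There is essentially no obstacle here: all of the mathematical content resides in Theorem \ref{thm:positive}, and the present statement is a notational reformulation. The one point meriting a sentence of justification is that $\Phi$ is a genuine bijection of the open first quadrant onto itself, so that passing between $(\alpha,\beta)$- and $(\um,\vm)$-coordinates neither gains nor loses solution pairs. As a sanity check one may note that in these coordinates the first-quadrant birational symmetry of Theorem \ref{thm:symmetries2} becomes the transposition $(\um,\vm)\mapsto(\vm,\um)$, which matches the evident symmetry of \eqref{eq:positive-uv} under interchanging the roles of $m/\um$ and $n/\vm$.
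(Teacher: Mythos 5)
Your proposal is correct and matches the paper's own treatment: the paper introduces Theorem \ref{thm:positive-uv} explicitly as ``Theorem \ref{thm:positive} in $(\um,\vm)$-coordinates,'' and the only content needed beyond Theorem \ref{thm:positive} is the observation (which the paper also makes, in passing) that $(\alpha,\beta)\mapsto(\frac1\alpha,\frac\beta\alpha)$ is a bijection of the open first quadrant, plus the routine algebra converting $m\alpha\beta+n\alpha=\beta$ into $\frac{m}{\um}+\frac{n}{\vm}=1$. Your sanity check via Theorem \ref{thm:symmetries2} is a nice touch but not required.
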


The criterion \eqref{eq:positive-uv} of this theorem is similar to the criterion 
\eqref{eqn:Beatty-disjoint} in the disjoint Beatty sequences result of Skolem and Bang. 
There are some differences: 
the  criterion of Theorem \ref{thm:positive-uv}
admits a wider range of $m, n$  than  for disjoint Beatty sequences
and it lifts the irrationality restriction on the coordinates $(u,v)$. 

The similarity is no coincidence.
The circle of ideas used to prove Theorem \ref{thm:positive-uv}
permit us to prove a parallel Beatty sequence result, 
that applies to an extension of the notion of Beatty sequences from $\NN^{+}$ to $\ZZ$.

\begin{defi}\label{def:45}
Let $u$ be a positive real number.
\begin{enumerate}
\item
The {\em  Beatty sequence $\sB(u)$  over $\ZZ$} is 
$$
\sB(u) := \{ \floor {n u} : \,  n \in \ZZ \} = \{ \floor {x} : \,  x \in u\ZZ \} .
$$

\item 
The  {\em reduced  Beatty sequence $\sB_0(u)$  over $\ZZ$}  is
$$
\sB_0(u) := \{ \floor {n u} : \,  n \in \ZZ \, \mbox{and} \,\, nu \not\in \ZZ\} 
= \{ \floor{ x } : \,  x \in u\ZZ \smallsetminus \ZZ\}.
$$
\end{enumerate}
\end{defi}

All  Beatty sequences over $\ZZ$ contain $0 = \floor 0$. 
For  $0 < u <1$ one  has $\sB_0(u) = \sB(u)= \ZZ$. 
If  $u>1$ is irrational   then  the reduced Beatty sequence is almost equal to the full one:
$\sB_0(u) = \sB(u) \smallsetminus \{0\}.$ 
If $u=\frac{r}{s}>1$ is rational, given in lowest terms, then the reduced Beatty sequence
$\sB_0(u) = \sB(u) \smallsetminus r\ZZ.$ 
In particular if $u$ is an integer then $\sB_0(u) = \emptyset$, 
while if $u$ is not an integer, then $\sB_0(u)$ is an infinite set.

Using results of this paper, we now establish a 
criterion for disjointness of reduced (full) Beatty sequences, 
which parallels the Skolem-Bang result.

\begin{thm}[Disjointness of reduced Beatty sequences over $\ZZ$]
\label{thm:27}
For $u, v>0$ the reduced Beatty sequences $\sB_0(u)$ and $\sB_0(v)$ over $\ZZ$ are
disjoint if and only if there exist integers $m, n \ge 0$, not both zero,  such that
\begin{equation}\label{eq:positive-beatty-uv}
\frac{m}{u} + \frac{n}{v} = 1.
\end{equation}
\end{thm}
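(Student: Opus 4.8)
The plan is to reduce the disjointness of reduced Beatty sequences $\sB_0(u), \sB_0(v)$ over $\ZZ$ to the nonnegative commutator criterion already established in Theorem~\ref{thm:positive-uv}. First I would make the translation explicit: writing $\alpha = \frac{1}{u}$ and $\beta = \frac{v}{u}$ (so that $u = \frac{1}{\alpha}$ and $v = \frac{\beta}{\alpha}$), the condition $\frac{m}{u} + \frac{n}{v} = 1$ becomes exactly $m\alpha + n\frac{\alpha}{\beta} = 1$, i.e. equation \eqref{eq:positive-uv}. So Theorem~\ref{thm:27} will follow once we show that the reduced Beatty sequences $\sB_0(u)$ and $\sB_0(v)$ are disjoint if and only if the commutator $[f_\alpha, f_\beta] \geq 0$ holds for these dilation parameters. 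The key observation linking the two is that $\sB_0(u) = \{\floor{x} : x \in u\ZZ \setminus \ZZ\} = \{\floor{\frac{1}{\alpha} k} : k \in \ZZ,\ \frac{k}{\alpha} \notin \ZZ\}$, and similarly for $v$; an integer $N$ lies in $\sB_0(u) \cap \sB_0(v)$ precisely when there exist nonintegral elements of $u\ZZ$ and of $v\ZZ$ both having floor equal to $N$.

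The main technical step is a pointwise analysis of the commutator. I would first observe that by scaling invariance (Theorem~\ref{thm:symmetries-p}(ii), or directly) one can reduce the quantifier over all real $x$ to a quantifier over integers, because $\floor{\beta x}$ and $\floor{\alpha x}$ only change at a discrete set of breakpoints and the commutator is a step function; it suffices to test $x$ of the form needed to realize the relevant floor values. Concretely, $\floor{\alpha \floor{\beta x}} - \floor{\beta \floor{\alpha x}} \geq 0$ for all $x$ should be shown equivalent to: for all integers $j, k$ with $j = \floor{\beta x}$, $k = \floor{\alpha x}$ simultaneously achievable, $\floor{\alpha j} \geq \floor{\beta k}$. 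Then I would relate a failure of disjointness — an integer $N \in \sB_0(u) \cap \sB_0(v)$ — to a choice of $x$ witnessing $[f_\alpha,f_\beta](x) < 0$. The reduced (rather than full) Beatty sequences are exactly what is needed here: the condition $nu \notin \ZZ$ corresponds to the dilation argument landing strictly between integers, which is precisely when the floor operation is "active" and can produce a strict inequality in the commutator. Points where $nu \in \ZZ$ contribute the trivial equality and must be excluded, which is why $\sB_0$ rather than $\sB$ appears.

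I expect the main obstacle to be making the correspondence between "witness $x$ for commutator negativity" and "common element $N$ of the two reduced Beatty sequences" precise and bidirectional, keeping careful track of the cases where $\alpha$ or $\beta$ (equivalently $u$ or $v$) is rational, since then $u\ZZ$ meets $\ZZ$ along a sublattice and the set of "active" breakpoints is itself periodic. In particular one must check that when $0 < u < 1$ we have $\sB_0(u) = \ZZ$, forcing disjointness to fail, and confirm this matches \eqref{eq:positive-beatty-uv} having no solution with $m,n \geq 0$ not both zero in that regime (here $\alpha = \frac{1}{u} > 1$, and indeed $m\alpha + n\frac{\alpha}{\beta} = 1$ with $m \geq 1$ is impossible while $m = 0$ forces the oblique-line case which must also be ruled out) — this boundary bookkeeping is where errors are most likely to hide. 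Once the equivalence "$\sB_0(u) \cap \sB_0(v) = \emptyset \iff [f_\alpha,f_\beta] \geq 0$" is in hand, the theorem is immediate from Theorem~\ref{thm:positive-uv} via the change of variables above.
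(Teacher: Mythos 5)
Your overall plan matches the paper's proof exactly: establish the equivalence between $\sB_0(u) \cap \sB_0(v) = \emptyset$ and the nonnegative commutator condition $[f_\alpha,f_\beta] \geq 0$ under the change of variables $\alpha = \frac{1}{u}$, $\beta = \frac{v}{u}$, and then invoke Theorem~\ref{thm:positive-uv}. The paper's proof is a one-liner precisely because that equivalence is already established as the equivalence $(P1) \Leftrightarrow (P3)$ of Proposition~\ref{lem:diagonal-pos}, so there is no need to re-derive it by a direct pointwise analysis of the commutator. The paper proves the equivalence by going through the rounding-function criterion (Proposition~\ref{prop:rounding-pos}) to the lattice-disjointness condition $(P2)$ --- that $\Lambda_{u,v}$ avoids the enlarged diagonal region $\cD$ --- and then translating $(P2)$ into Beatty-sequence language, which is substantially cleaner than the breakpoint bookkeeping you are anticipating.

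One genuine error in your sanity check for $0 < u < 1$: you assert that $\sB_0(u) = \ZZ$ ``forces disjointness to fail'' and that the $m = 0$ oblique-line solutions ``must also be ruled out.'' Both claims are wrong, and the two mistakes cancel rather than revealing an inconsistency. When $0 < u < 1$ and $v$ is a positive integer, $\sB_0(v) = \emptyset$, so $\sB_0(u) \cap \sB_0(v) = \emptyset$ and disjointness \emph{does} hold; correspondingly, equation \eqref{eq:positive-beatty-uv} is satisfied with $m = 0$, $n = v$. That is, the $m = 0$ case is not a spurious solution to be excluded --- it is exactly the family that makes the equivalence true in this regime. If you had carried this misconception into the main argument (``disjointness always fails when one parameter is small'') it would have broken the forward direction of the proof, so it is worth flagging even though it does not invalidate the overall structure.
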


To deduce this result, we use   Proposition \ref{lem:diagonal-pos} of this paper, 
which asserts that for positive dilations ($\alpha, \beta >0$) 
the nonnegative commutator property
\[ 
\floor{\alpha \floor{\beta x}} \geq \floor{\beta \floor{\alpha x}} \quad \text{for all}\quad  x\in\RR
\]
holds if and only if  the quantities $\um =\frac{1}{\alpha}$ and $\vm = \frac{\beta}{\alpha}$ satisfy
\begin{equation*}
\sB_0(\um) \bigcap \sB_0(\vm) = \emptyset.
\end{equation*}
Theorem \ref{thm:27}  follows on combining this result
with  Theorem \ref{thm:positive-uv},  
together with the observation that the change of variable from $(\alpha, \beta)$-coordinates 
to $(\um,\vm)$-coordinates is 
a bijection from the positive quadrant of $\RR^2$ to itself.

The proof of Theorem \ref{thm:positive-uv}
requires a new ingredient not present  in establishing  criterion 
\eqref{eqn:Beatty-disjoint} for disjoint Beatty sequences, which is
needed to handle  rational $\um$ and $\vm$ satisfying \eqref{eq:positive-uv}.
It involves a relation  with the two-dimensional Diophantine Frobenius problem,
appearing in case (2-b) of Theorem \ref{thm:pos-necessary} in Section \ref{subsec:pos-necessary}.


%
%
\subsection{Outline of proofs}
\label{subsec:outline}

In Section \ref{sec:basic} we  determine when the commutator relation  
$[f_\alpha, f_\beta]\geq 0$  holds  
in the  case when the dilation factors $\alpha, \beta$ have different signs.

In Section \ref{sec:rounding} we introduce a family of rounding functions 
depending on a dilation factor $\alpha$,
which are variations of the floor function that all have average slope $1$.
This family of functions may be of independent interest. 
Using rounding functions, we  derive the criterion 
Proposition  \ref{prop:rounding-pos}
for  the nonnegative commutator relation.
We derive ordering relations among different rounding functions,
and from these derive the symmetries of $S$ on the positive dilation region
 (Theorem \ref{thm:symmetries-p}).
We also deduce the transitivity of the nonnegative commutator relation (Theorem \ref{thm:poset}),
which implies there is an induced preorder on the set of nonzero dilations.

In Sections \ref{sec:positive-S} and \ref{sec:positive-N} we prove Theorem \ref{thm:positive}.
We address the two directions of the ``if and only if'' statement of this theorem separately in 
Section \ref{sec:positive-S} (sufficiency of criterion \eqref{eq:positive})
and Section \ref{sec:positive-N} (necessity of \eqref{eq:positive}).
The specific statements are given in Theorem \ref{thm:pos-sufficient} and Theorem \ref{thm:pos-necessary}.
The two parts of the proof  use different coordinate systems for the dilation parameters.  
The symmetry of exchanging $\um$ and $\vm$ 
in the  $(\um,\vm)$-coordinate system for disjoint Beatty sequences
extends to a birational symmetry of $S$ in the $(\alpha, \beta)$-coordinates for positive dilations.

In Section \ref{sec:consequences}  we prove the closure  Theorem \ref{thm:closed}
for $S$ in the case where at least one parameter $\alpha$ or $\beta$ is nonnegative. 
The proof for the remaining  case of  Theorem \ref{thm:closed} 
in the negative dilations region is deferred to Part II \cite{LagR:2018b}.

%
%
\subsection{Notation}

(1) The moduli space parametrizing  dilations, also termed ``parameter space,''
 has  coordinates denoted $(\alpha, \beta)$.
The proofs use several different birationally transformed coordinate systems for parameter space.
The  coordinate system  $(\alpha, \beta)$ is used in Sections \ref{sec:basic} and \ref{sec:rounding},
 the  $(\um, \vm)$-coordinate system is used in Section \ref{sec:positive-S}, 
and   the $(\talpha, \trho)$ coordinate system is used in
Section \ref{sec:positive-N}. 
All of these coordinates are positive real-valued in Sections \ref{sec:positive-S} and \ref{sec:positive-N}.

(2) Variables denoted $x$ and $y$  will generally
refer to coordinates of  graphs of dilated floor functions and/or to
 graphs of commutator functions $[f_{\alpha}, f_{\beta}](x)$. 
 These coordinates are  (positive or negative) real-valued and we refer to them as 
``function coordinates.''

%
%
\section{Mixed sign dilations  and preorder theorem}
\label{sec:basic}
\setcounter{equation}{0}

In this section we consider the nonnegative commutator relation $[f_\alpha,f_\beta]\geq 0$ when the dilation factors $\alpha, \beta$ differ in sign.

\subsection{Floor function basics}
The floor function $\floor{x}$ is defined to be the greatest integer which is no larger than $x$,
while the ceiling function $\ceil{x}$ is defined to be the least integer which is no smaller than $x$.
In other words,
\begin{equation*}
\floor{x} = \max\{ n\in \ZZ : n\leq x \}, \qquad 
\ceil{x} = \min\{ n\in \ZZ : n\geq x\}.
\end{equation*}
We let $f_{\alpha}(x) = \floor {\alpha x }$ and $g_{\alpha}(x) = \ceil {\alpha x}$.

The following properties of the floor and ceiling functions  are immediate:
\begin{equation}\label{floor-bounds}
x-1 < \floor{x} \leq x, \qquad x \leq \ceil{x} < x+1 ;
\end{equation}
\begin{equation}\label{eq:monotonic}
x\leq y \quad\text{implies}\quad 
\floor{x} \leq \floor{y} \quad\text{and}\quad \ceil{x} \leq \ceil{y}.
\end{equation}
The floor and ceiling functions are conjugate under negation, i.e.
\begin{equation}\label{floor-ceil}
\ceil{x} = -\floor{-x}.
\end{equation}
Thus $\floor{-x} = -\ceil{x}$.

\subsection{Preorder property: proof of  Theorem  \ref{thm:poset}}
\label{subsec:partialorder}

We let  $\RR^*$ denote the nonzero real numbers, 
and let $(\RR^*, \preceq_C )$  denote the binary relation on $\RR^* $ defined by 
\[
\alpha \preceq_C \beta \quad \mbox{if} \quad  
[f_\alpha,f_\beta]  \geq 0 .
\]
This relation is reflexive, i.e. $\alpha\preceq_C \alpha$ for all $\alpha $.
By Theorem \ref{thm:mixed-sign},
we have $\alpha \preceq_C \beta$ whenever  $\alpha < 0 < \beta$.

We prove Theorem \ref{thm:poset}, which asserts that this relation is transitive,
and so defines a preorder on all nonzero  elements $\RR^{\ast}$.
We restate the theorem  in terms of $\preceq_C$ for the reader's convenience.

\begin{customthm}{\ref{thm:poset}}[Nonnegative commutator preorder]
The  nonnegative commutator relation  ${(\RR^*, \preceq_C )}$ is transitive, meaning that for non-zero dilation factors $\alpha, \beta, \gamma$, 
\begin{equation*}
[f_\alpha, f_\beta] \geq 0 \quad\text{and}\quad [f_\beta, f_\gamma] \geq 0 
\qquad\text{imply that}\qquad [f_\alpha, f_\gamma] \geq 0 .
\end{equation*}
\end{customthm}

\begin{proof}
Suppose $\alpha, \beta$ are nonzero. 
We observe that after the change of variable $x=\frac{y}{\alpha\beta}$, 
the nonnegative commutator relation
$[f_\alpha, f_\beta](x) \ge0$  
can be rewritten as
\begin{equation}\label{eq:rounding}
\floor{ \alpha \floor{ \frac{1}{\alpha} y}} \geq \floor{\beta \floor{\frac{1}{\beta} y}} \quad\text{for all }y\in \RR.
\end{equation}
This equivalent version of the nonnegative commutator relation \eqref{eqn:ineq}  has the important feature that the $\alpha$ and $\beta$ parameters are {\em separated} on the two sides of the inequality.

Now suppose  that $\alpha$, $\beta$, and $\gamma$ are non-zero 
such that $[f_\alpha,f_\beta]\geq 0$ and $[f_\beta,f_\gamma]\geq 0$.
Then by the criterion \eqref{eq:rounding}, for all $y\in \RR$ we have
$\floor{ \alpha \floor{ \frac{1}{\alpha} y}} \geq \floor{\beta \floor{\frac{1}{\beta} y}}$
and 
$\floor{ \beta \floor{ \frac{1}{ \beta}y}} \geq \floor{\gamma \floor{\frac{1}{\gamma} y}}$.
Thus
$\floor{ \alpha \floor{ \frac{1}{ \alpha} y}} \geq \floor{\gamma \floor{\frac{1}{ \gamma} y}}$
holds for all $y \in \RR$, whence $[f_\alpha, f_\gamma] \geq 0$.
\end{proof} 

%
%
\begin{rmk}\label{rmk:partial-order}
Theorem \ref{thm:poset}  establishes the preorder property without
determining its structure.
The fact that for $\alpha>0$  the partial order $\preceq_C$
contains the divisibility relation 
follows from the symmetries of $S$ described in Theorem  \ref{thm:symmetries-p}(i).
\end{rmk}

%
%
\subsection{Mixed sign dilations: proof of Theorem \ref{thm:mixed-sign}}
\label{subsec:mixed-sign}

We restate Theorem \ref{thm:mixed-sign} here for the convenience of the reader, and give a proof.

\begin{customthm}{1.1}[Mixed Sign Dilations Classification]
Suppose dilation factors $\alpha$ and $\beta$ have opposite signs.
\begin{enumerate}[(a)]
\item
 If $\alpha < 0$ and $\beta > 0$, then the commutator relation $[f_{\alpha}, f_\beta]\geq 0$ 
is  satisfied.
\item[(b)]
 If $\alpha > 0$ and $\beta < 0$, then the commutator relation $[f_\alpha, f_{\beta}]\geq 0$ 
is not satisfied.
\end{enumerate}
\end{customthm}

\begin{proof}
We set $\alphap = |\alpha|$ and $\betap = |\beta|$, so  that $\alphap, \betap > 0$.

(a) (Case $\alpha<0, \beta >0$) 
By setting $x = \frac1{\alpha\beta}y$, it suffices to show that $\floor{ \alpha \floor{ \frac{1}{\alpha} y}} \geq \floor{\beta \floor{\frac{1}{\beta} y}}$
for all $y\in \RR$.
Note that  $(\alpha, \beta) = (-\alphap, \betap)$ with $\alphap, \betap > 0$.
We have
\begin{align*}
\qquad 
{-\alphap \floor{\frac{1}{-\alphap} y}} 
&= {\alphap \ceil{\frac{1}{\alphap} y}}
    && \text{by }\eqref{floor-ceil},\\
&\geq y 
    && \text{by }\eqref{floor-bounds},\\
&\geq {\betap \floor{\frac{1}{\betap} y}} 
    && \text{by }\eqref{floor-bounds}. \qquad
\end{align*}
By \eqref{eq:monotonic}, this implies
$\floor{-\alphap \floor{\frac{1}{-\alphap} y}} \geq \floor{ \betap \floor{\frac1{\betap} y}}$
so the commutator relation $[f_{-\alphap}, f_{\betap}] \ge 0$ holds.

(b) (Case $\alpha>0, \beta <0$) 
Now we have $(\alpha, \beta) = (\alphap, -\betap)$.
By the above argument, for any $y$
$$
{ \floor{\alphap \floor{\frac{1}{\alphap}y}} \leq \floor{ -\betap \floor{\frac{1}{-\betap} y}} }.
$$
Moreover, this inequality is strict for $y = -\epsilon$ a negative number sufficiently close to zero, since
\begin{equation*}
\floor{ \alphap \floor{-\frac{1}{\alphap} \epsilon}}  = \floor{-\alphap} \leq -1 < 0 = \floor{ -\betap \floor{ \frac{1}{\betap} \epsilon}}.
\end{equation*}
In particular $[f_{\alphap}, f_{-\betap}](x) <0$ for $x = \frac{\epsilon}{\alphap\betap}$, so 
$[f_\alphap, f_{-\betap}]$ is not always nonnegative.
\end{proof}

%
%
\section{Rounding Functions}
\label{sec:rounding}
\setcounter{equation}{0}

To deal with the nonnegative commutator relation for  positive dilations, we 
introduce a new family of functions, which we call rounding functions. 

The dilated floor function $f_\alpha(x) = \floor{\alpha x}$ is a variation of the floor
function  which sends $\RR \to \ZZ$ but is modified to have average slope $\alpha$.
Rounding functions, on the other hand, are variations of the floor function which all have average slope $1$, but instead map $\RR \to \alpha \ZZ$ for some real parameter $\alpha$.

\begin{defi}
Given parameter $\alpha \neq 0$, 
let $\floor{x}_\alpha$ and $\ceil{x}_\alpha$ 
denote the {\em (lower) rounding function}  
defined by
\begin{equation*} 
\floor{x}_\alpha := \alpha \floor{\frac{1}{\alpha} x}
\end{equation*}
and the {\em (upper)
rounding function}  defined by
\begin{equation*} 
\ceil{x}_\alpha := \alpha \ceil{\frac1{\alpha} x}. 
\end{equation*}
\end{defi}

The usual floor and ceiling functions are 
contained in this family as $\floor{x} = \floor{x}_1$ and $\ceil{x} = \ceil{x}_1$, respectively.
The identity \eqref{floor-ceil} says that
$\ceil{x} = \floor{x}_{-1}$; 
since we allow  $\alpha$ to be negative, the above definition gives a single family of rounding functions, namely
\begin{equation}\label{eqn:floor-ceil}
\ceil{x}_{\alpha} = \floor{x}_{-\alpha}.
\end{equation}
Furthermore it is natural to extend the definition to $\alpha=0$ by setting $\floor{x}_0 = x$, 
observing that $x$
 is the  limit of $\floor{x}_{\alpha}$ as $\alpha $ approaches zero from either direction.
This family of functions seems of some interest in its own right. 
We also use the notation $r_\alpha(x) = \floor{x}_\alpha$ for the rounding function when it is more convenient.

Rounding functions appear in the separation of variables criterion \eqref{eq:rounding}
for a nonnegative commutator, for all nonzero $\alpha, \beta$.  
That criterion  can be rewritten in terms of  rounding functions as
\begin{equation}\label{eqn:rounding2}
\floor{ \floor{y}_\alpha}_1 \ge \floor{\floor{y}_\beta}_1  \quad \mbox{ for all} \quad y \in \RR.
\end{equation}
A related family of functions, the {\em strict rounding functions},
will be required  in the negative dilations case treated in Part II \cite{LagR:2018b}.


\subsection{Rounding functions: ordering inequalities} 

We  classify when two rounding functions are comparable 
in the sense that the graph of one lies (weakly) below the other.
These relations will be used to analyze  symmetries of the set $S$ of dilation factors 
$(\alpha,\beta)$ which satisfy ${ [f_\alpha,f_\beta]\geq 0.}$

\begin{prop}[Rounding Function Ordering Inequalities]
\label{prop:rounding-relations}
Suppose dilation factors $\alpha,\beta $ are positive. 
\begin{enumerate}[(a)]
\item 
xThe inequality $\floor{x}_\alpha \leq \floor{x}_\beta$ holds for all $x\in\RR$ if and only 
if $\alpha = m\beta$ for some integer $m\geq 1$.
\item
The inequality $\ceil{x}_\alpha \leq \ceil{x}_\beta$, 
holds for all $x\in\RR$ 
if and only if $\beta = m\alpha$ for some integer $m\geq 1$.
\end{enumerate}
\end{prop}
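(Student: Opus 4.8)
The plan is to prove part (a) directly from the definition $\floor{x}_\alpha = \alpha\floor{\frac{1}{\alpha}x}$, and then to obtain part (b) from part (a) by the reflection identity $\ceil{x}_\alpha = -\floor{-x}_\alpha$, which follows from $\ceil{t} = -\floor{-t}$ (equation \eqref{floor-ceil}).

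For the ``if'' direction of (a), assuming $\alpha = m\beta$ with $m\ge 1$ an integer, I would substitute $t = x/\beta$ and reduce the claimed inequality to $m\floor{t/m}\le\floor{t}$; this holds since $m\floor{t/m}$ is an integer not exceeding $t$, hence at most $\floor{t}$. (Geometrically, $\floor{x}_\gamma$ is the largest element of $\gamma\ZZ$ lying at or below $x$, and $\alpha = m\beta$ gives $\alpha\ZZ\subseteq\beta\ZZ$, so the $\alpha$-staircase stays below the $\beta$-staircase; but the one-line computation is cleaner.) For the ``only if'' direction, the key is to evaluate the hypothesis $\floor{x}_\alpha\le\floor{x}_\beta$ at the single point $x=\alpha$. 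There $\floor{\alpha}_\alpha = \alpha\floor{1} = \alpha$, while $\floor{\alpha}_\beta = \beta\floor{\alpha/\beta}\le\beta\cdot(\alpha/\beta)=\alpha$, so the hypothesis squeezes $\beta\floor{\alpha/\beta}=\alpha$. Thus $\alpha/\beta = \floor{\alpha/\beta}$ is an integer; since $\alpha,\beta>0$ it is $\ge 1$, and $m = \alpha/\beta$ works.

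For part (b), I would rewrite $\ceil{x}_\alpha = \alpha\ceil{x/\alpha} = -\alpha\floor{-x/\alpha} = -\floor{-x}_\alpha$. Then $\ceil{x}_\alpha\le\ceil{x}_\beta$ for all $x\in\RR$ is equivalent, after the substitution $y=-x$, to $\floor{y}_\beta\le\floor{y}_\alpha$ for all $y\in\RR$, which by part (a) with $\alpha$ and $\beta$ interchanged holds precisely when $\beta = m\alpha$ for some integer $m\ge 1$.

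The argument is essentially routine, so there is no genuine obstacle; the only mildly delicate point is the choice of the test value $x=\alpha$ in the forward direction of (a) — one wants a point at which the $\alpha$-rounding function sits exactly on the diagonal $y=x$, leaving the competing $\beta$-value no room to be any larger. Everything else reduces to the elementary fact that $n\le\floor{t}$ whenever $n\in\ZZ$ and $n\le t$.
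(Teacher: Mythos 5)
Your proof is correct and takes essentially the same route as the paper: both directions of (a) hinge on the reduction (after rescaling by $\beta$) to the statement that $\floor{x}_\gamma \le \floor{x}$ identically iff $\gamma$ is a positive integer, with the converse pinned down by testing at the point where $\floor{x}_\gamma$ meets the diagonal; and (b) is obtained from (a) by the negation conjugacy $\ceil{x}_\alpha = -\floor{-x}_\alpha$, exactly as in the paper.
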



\begin{proof}
(a) By definition of the  rounding functions,  after multiplication by $\frac1{\beta}$ 
the inequality $\floor{x}_\alpha \leq \floor{x}_\beta$ is equivalent to the inequality
 $\floor{\frac1{\beta}x}_{\alpha/\beta} \leq \floor{\frac1{\beta}x}_1$.
Thus it suffices to show that the condition
\begin{equation*}
\floor{x}_\alpha \leq \floor{x}_1 = \floor{x} \quad\text{for all } x\in \RR
\end{equation*}
holds if and only if $\alpha$ is a positive integer.
If $\alpha=m$ is a positive integer, 
then $\floor{x}_m$ is an integer no larger than $x$ so $\floor{x}_m \leq \floor{x}$
by definition of the floor function.
Conversely if $\alpha$ is not an integer, then at $x = \alpha$ we have 
$\floor{\alpha}_\alpha = \alpha > \floor{\alpha}$.

(b) This inequality follows from (a) 
 by conjugating the real line $\RR$ with respect to negation $x \mapsto -x$
(i.e. ``rotating the graph by $180^\circ$'').
Namely, since $\ceil{x}_\alpha = -\floor{-x}_\alpha$ the given condition is equivalent to 
\begin{equation*}
-\floor{x}_\alpha \leq -\floor{x}_\beta \quad\text{for all } x\in \RR.
\end{equation*}
Multiplying by $-1$ switches the direction of the inequality and reduces to (a). 
\end{proof}

\subsection{Rounding function criterion: positive dilations}
\label{subsec:pos-rounding-crit}

We first give a condition in terms of  rounding functions 
which is equivalent to the nonnegative commutator condition 
$[f_\alpha, f_\beta]\geq 0$ on dilated floor functions $f_\alpha(x) = \floor{\alpha x}$.

\begin{prop}[Nonnegative Commutator Relation: Rounding Function Criteria]
\label{prop:rounding-pos}
For $\alpha, \beta > 0$, the following properties are equivalent.
\begin{enumerate}
\item[(R1)]  The  nonnegative commutator relation holds:
\[ 
[f_\alpha, f_\beta](x) \geq 0  \quad \mbox{for all}\quad x \in \RR.
\]

\item[(R2)] (Upper rounding function) There holds
\begin{equation}\label{eqn:criterion-positive}
\ceil{n}_\alpha \leq \ceil{n}_\beta \quad\text{for all} \quad n \in \ZZ,
\end{equation}
where $\ceil{x}_\alpha = \alpha \ceil{\frac{1}{\alpha} x}.$
\end{enumerate}
\end{prop}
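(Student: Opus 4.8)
The plan is to establish the equivalence (R1) $\Leftrightarrow$ (R2) by working with the separation-of-variables form \eqref{eq:rounding} of the commutator relation, rewritten via \eqref{eqn:rounding2} as $\floor{\floor{y}_\alpha}_1 \ge \floor{\floor{y}_\beta}_1$ for all $y\in\RR$. The first observation is that $\floor{y}_\alpha = \alpha\floor{\frac1\alpha y}$ takes values in $\alpha\ZZ$ and is a nondecreasing step function whose value at a generic $y$ depends only on which interval $[\,k\alpha,(k+1)\alpha)$ contains $y$; as $y$ ranges over $\RR$ the pair $(\floor{y}_\alpha,\floor{y}_\beta)$ takes only countably many values, and by left-continuity the inequality $\floor{\floor{y}_\alpha}_1 \ge \floor{\floor{y}_\beta}_1$ can be tested at the breakpoints. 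The breakpoints of $\floor{\cdot}_\beta$ occur at $y\in\beta\ZZ$; approaching such a breakpoint from the left makes $\floor{y}_\beta$ as large as possible (it equals $m\beta$ for $y$ slightly below $(m{+}1)\beta$), while $\floor{y}_\alpha\le y$. This is where I expect the argument to become delicate: one must confirm that the worst case of the inequality genuinely occurs as $y\uparrow n$ along the $\beta$-lattice, i.e. that checking $y = n$ (equivalently, using the upper rounding functions evaluated at integers) suffices.

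Concretely, I would argue as follows. For the direction (R1) $\Rightarrow$ (R2): fix $n\in\ZZ$ and take $y = \beta n - \epsilon$ for small $\epsilon>0$. Then $\frac1\beta y = n - \frac{\epsilon}{\beta}$, so $\floor{\frac1\beta y} = n-1$ and $\floor{y}_\beta = \beta(n-1) = \beta n - \beta$; letting $\epsilon\to 0$ and using that $\floor{\cdot}_1$ is left-continuous at non-integers (and handling the integer case by a further limit), one gets $\floor{\floor{\beta n}_\beta - \beta}_1$ on the right. Meanwhile the left side at the same $y$ involves $\floor{\floor{\beta n - \epsilon}_\alpha}_1$. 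The cleaner route is to use \eqref{floor-ceil}: taking $y = \beta n$ directly and passing to the reflected variable, or more simply substituting $y\mapsto -y$ in \eqref{eqn:rounding2}, converts lower rounding functions into upper rounding functions, $\ceil{x}_\gamma = \floor{x}_{-\gamma} = -\floor{-x}_\gamma$, so that \eqref{eqn:rounding2} becomes $\ceil{\ceil{y}_\alpha}_1 \le \ceil{\ceil{y}_\beta}_1$ for all $y$. Since $\ceil{\cdot}_\beta$ is constant on intervals $(m\beta,(m{+}1)\beta]$ with value $(m{+}1)\beta$ and jumps exactly at $\beta\ZZ$, its set of attained values is $\beta\ZZ$, realized (among others) at integer points $y=n$ when... — but one must be careful, $\ceil{n}_\beta$ need not exhaust $\beta\ZZ$. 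The key reduction is instead: $\ceil{\ceil{y}_\alpha}_1 = \ceil{\alpha\ceil{\frac1\alpha y}}$ and since $\ceil{\frac1\alpha y}$ is an integer, the outer operation only depends on $\alpha$ times an integer; as $y$ varies the inner integer $\ceil{\frac1\alpha y}$ ranges over all of $\ZZ$. So the inequality $\ceil{\ceil{y}_\alpha}_1 \le \ceil{\ceil{y}_\beta}_1$ for all $y$ is NOT the same as \eqref{eqn:criterion-positive}; rather, \eqref{eqn:criterion-positive} is the statement after we fix the common "inner index". The honest statement is: \eqref{eqn:rounding2} holds iff for every $y$, with $p=\floor{\frac1\beta y}$, one has $\floor{\alpha\floor{\frac1\alpha y}}_1 \ge \floor{\beta p}_1$; and the supremum of the right side over $y$ with a fixed relationship to $p$, against the infimum of the left side, collapses to comparing at $y\in\ZZ$. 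I would make this precise by showing both sides of \eqref{eqn:rounding2}, as functions of $y$, are determined by their values at integers together with monotonicity, because $\floor{x}_1$ only sees the integer part.

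The hard part will be handling the rational degeneracies cleanly — when $\alpha$ or $\beta$ is rational the step functions $\floor{y}_\alpha$ have breakpoints that can coincide with integers, so the left-continuity/limit arguments need the right one-sided limits, and one must verify no "hidden" value of $\ceil{\cdot}_\beta$ strictly between the integer samples can violate the inequality while the integer samples satisfy it. I expect the resolution is a monotonicity sandwich: for $y\in(n-1,n]$ one has $\ceil{y}_\alpha \le \ceil{n}_\alpha$ and $\ceil{y}_\beta \ge \ceil{n-1}_\beta$, wait — one needs them to point the same way, so the correct grouping is over intervals of the form where $\ceil{\frac1\alpha y}$ and $\ceil{\frac1\beta y}$ are simultaneously as favorable as possible, and the endpoints of such intervals are exactly $\frac1\alpha$- and $\frac1\beta$-lattice points whose images under the outer floor reduce, by \eqref{floor-bounds} and \eqref{eq:monotonic}, to the integer-sample comparison. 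So the skeleton is: (i) pass to the separated form; (ii) replace lower rounding by upper rounding via negation so the extremal values are attained at the right endpoints $y = n$; (iii) use that $\floor{\cdot}_1$ depends only on integer parts to show the universally-quantified inequality over $y\in\RR$ is equivalent to its restriction to $y\in\ZZ$; (iv) read off \eqref{eqn:criterion-positive}. Steps (ii) and (iii) carry all the content; step (iii) is where the rational edge cases live.
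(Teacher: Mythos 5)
Your proposal is an honest plan rather than a completed proof --- you flag the key reduction as something you ``expect'' or ``would make precise'' rather than something you establish --- and the step you propose at that point does not work. In step~(iii) of your skeleton you claim that, after converting to the upper-rounding form $\ceil{\ceil{y}_\alpha}_1 \le \ceil{\ceil{y}_\beta}_1$, the universally quantified inequality over $y\in\RR$ ``is equivalent to its restriction to $y\in\ZZ$,'' from which you then ``read off'' \eqref{eqn:criterion-positive}. But restricting to $y=n\in\ZZ$ gives only
\[
\bceil{\alpha\bceil{\tfrac{n}{\alpha}}} \;\le\; \bceil{\beta\bceil{\tfrac{n}{\beta}}}\qquad\text{for all }n\in\ZZ,
\]
which is \emph{strictly weaker} than $(R2)$: if $\alpha\ceil{n/\alpha}=2.7$ and $\beta\ceil{n/\beta}=2.3$, both sides round up to $3$, so the integer-sampled condition holds while $(R2)$ fails. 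The outer $\ceil{\cdot}_1$ destroys exactly the information $(R2)$ needs. You in fact notice this yourself midway through (``is NOT the same as \eqref{eqn:criterion-positive}'') and then speak of a ``supremum vs.\ infimum'' collapse, but that collapse is never argued, and your final skeleton reverts to the integer-sampling step that does not hold.

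The correct tool here is the level-set duality that the paper uses. For two integer-valued functions $f,g$ on $\RR$, one has $f\ge g$ pointwise if and only if $U[g](n)\subseteq U[f](n)$ for every $n\in\ZZ$, where $U[f](n)=\{x:f(x)\ge n\}$. Applied to $f=r_1\circ r_\alpha$ and $g=r_1\circ r_\beta$, a short chain of equivalences (repeatedly using that $\floor{a}\ge m\Leftrightarrow a\ge m$ and $m\ge a\Leftrightarrow m\ge\ceil{a}$ for integer $m$) shows $U[r_1\circ r_\alpha](n)=\{x:x\ge\ceil{n}_\alpha\}$, so the inclusion $U[r_1\circ r_\beta](n)\subseteq U[r_1\circ r_\alpha](n)$ is literally $\ceil{n}_\alpha\le\ceil{n}_\beta$. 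In your language, the correct values to ``sample'' at are not $y=n$ but $y=\ceil{n}_\beta$ (the left endpoint of the level set of the right-hand side), which is the point you brush against with the $y=\beta n-\epsilon$ remark before abandoning that thread. Notice also that once you use level sets there are no ``rational edge cases'' to worry about --- the chain of equivalences is uniform in $\alpha,\beta>0$, so the delicate case analysis you anticipate in step~(iii) evaporates.
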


\begin{rmk}
\noindent The  condition  (R2) can be rewritten 
in terms of lower rounding functions, via \eqref{eqn:floor-ceil},  as:
\begin{enumerate}
\item[{\em (R3)}]
{\em (Lower rounding function) There holds
\begin{equation}\label{eqn:criterion:lower}
\floor{n}_{-\alpha} \leq \floor{n}_{-\beta} \quad\text{for all }n\in \ZZ.
\end{equation}
}
\end{enumerate}
\end{rmk}

\begin{proof}[Proof of Proposition \ref{prop:rounding-pos}]
The {\em upper level set} $U[f](y)$  of a real-valued function $f$ at level $y$ is:
$$
U[f](y) := \{ x\in \RR : f(x) \ge y \}.
$$
For  real-valued functions $f$ and $g$ on $\RR$, to prove that $f(x) \ge g(x)$ holds for all $x$, it suffices to show the set inclusions for corresponding upper level sets 
\begin{equation}\label{eq:level-inclusion}
U[g](y) \subset U[f](y) \quad \mbox{holds for all $y$ in the range of $g$}.
\end{equation}

For positive $\alpha, \beta$ the  nonnegative commutator condition $[f_\alpha, f_\beta]\geq 0$
 is equivalent to  the condition 
 \begin{equation}\label{eq:ineq-nn}
\floor{\alpha \floor{\frac{1}{\alpha} x}} \geq \floor{\beta \floor{\frac{1}{\beta} x}}  
\quad\text{for all} \,\, x\in \RR.
\end{equation} 
The  functions $r_1\circ r_{\alpha}(x)= \floor{\alpha \floor{\frac{1}{\alpha} x}}$ and
 $r_1\circ r_{\beta}(x)= \floor{\beta \floor{\frac{1}{\beta} x}}$   
 both have range taking values in integers.   
Applying the level set criterion above to
 $f=r_1\circ r_{\alpha}$ and
 $g=r_1\circ r_{\beta}$, 
we   conclude that \eqref{eq:ineq-nn} 
holds if and only if 
\begin{equation}\label{eq:level-inclusion-rounding}
U [r_1\circ r_\beta](n)\subset U [r_1\circ r_\alpha](n) \quad \mbox{for all  $n \in \ZZ$.}
\end{equation}
The equivalence (R1) $\Leftrightarrow$ (R2) follows as  a consequence of 
the following  formula for the upper level sets: 
For all $\alpha > 0$,  
\begin{equation}\label{eq:level-value}
U[r_1\circ r_{\alpha}](n) = \{ x : x \ge \ceil{n}_{\alpha}\}.
\end{equation}
This formula follows by the chain of equivalences:
\begin{align*}
\qquad  \floor{ \alpha \floor{ \frac1\alpha x}}  \geq n
& \quad\Leftrightarrow\quad  \alpha \floor{ \frac1\alpha x}  \geq n 
    && \quad \mbox{(the right side is in} \,\, \ZZ \,\mbox{)}\\
& \quad\Leftrightarrow\quad    \floor{ \frac1\alpha x} \geq \frac{1}{\alpha} n  
    && \quad \mbox{(since $\alpha > 0$)}\\\
& \quad\Leftrightarrow\quad   \floor{ \frac1\alpha x} \geq \ceil{ \frac{1}{\alpha} n }   
    &&  \quad \mbox{(the left side is in} \,\, \ZZ \,\mbox{)} \\
& \quad\Leftrightarrow\quad   \frac1\alpha x \geq \ceil{ \frac{1}{\alpha} n }   
    && \quad \mbox{(the right side is in} \,\, \ZZ \,\mbox{)}\\
&\quad\Leftrightarrow\quad   x \geq \alpha \ceil{ \frac{1}{\alpha} n }  = \ceil{ n}_{\alpha} 
    && \quad \mbox{(since $\alpha >0$)}.
\end{align*} 
The level set inclusion \eqref{eq:level-inclusion-rounding}
is equivalent  by \eqref{eq:level-value} to the condition
\[ 
\lceil n\rceil_{\alpha} \le \lceil n\rceil_{\beta} \quad \mbox{holds for all $n \in \ZZ$},
\]
as asserted. 
\end{proof}

%
%
\subsection{Symmetries of $S$ for positive dilations: Proof of Theorem \ref{thm:symmetries-p}} 
\label{subsec:symmetries-pos}

We  deduce symmetries of the set $S$ for positive dilations from symmetries of
the integer rounding criterion given in  Proposition \ref{prop:rounding-pos}. 

\begin{proof}[Proof of Theorem \ref{thm:symmetries-p}]
We suppose $\alpha >0, \beta >0$ and are to show:
\begin{enumerate}[(i)]
\item for any integer $m\geq 1$,  if $(\alpha, \beta) \in S$  then $( \alpha, {m}\beta)\in S$.
\item for any integer $m\geq 1$,  if $(\alpha, \beta) \in S$  then $(\frac{1}{m}{\alpha}, \frac{1}{m}\beta) \in S$.
\end{enumerate}
By  Proposition \ref{prop:rounding-pos}  we have $(\alpha, \beta) \in S$ if and only if 
these parameters satisfy the rounding function  inequalities
\begin{equation}\label{rounding-positive}
\ceil{n}_\alpha \leq \ceil{n}_\beta \quad\text{for all }n\in\ZZ .
\end{equation}
Therefore, given that $(\alpha,\beta)$ satisfies \eqref{rounding-positive} it suffices to show that
\begin{enumerate}[(i)]
\item for any integer $m\geq 1$,  the dilation factors $( \alpha, {m}\beta)$ also satisfy  \eqref{rounding-positive}.
\item  for any integer $m\geq 1$,  the dilation factors $(\frac{1}{m}{\alpha}, \frac{1}{m}\beta)$ also satisfy
 \eqref{rounding-positive}.
\end{enumerate}
To show these, note that for any integer $m \ge 1$,

(i)  Proposition \ref{prop:rounding-relations} implies $\ceil{x}_\beta \leq \ceil{x}_{m\beta}$ for all $x\in\RR$, so in particular $\ceil{n}_\beta \leq \ceil{n}_{m\beta}$.

(ii) By definition of the rounding functions,
 $\ceil{n}_{\alpha/m} \leq \ceil{n}_{\beta/m} $ is equivalent to $\ceil{mn}_\alpha \leq \ceil{mn}_\beta$.
\end{proof}

%
%
\section{Positive Dilations Classification: Sufficiency}
\label{sec:positive-S}
\setcounter{equation}{0}

In this section we prove the sufficiency of   the criterion of Theorem \ref{thm:positive}
for  membership  $(\alpha, \beta) \in S$ of a pair of positive  dilations.
Namely, we show that if dilation parameters $\alpha, \beta$ satisfy the condition
\begin{equation}
\tag{\ref{eq:positive2}} 
m\alpha + n\frac{\alpha}{\beta} = 1 \quad\text{for some integers }m,n\geq 0
\end{equation}
then they also satisfy the nonnegative commutator relation $[f_\alpha, f_\beta]\geq 0$.

To do so it is convenient to make a birational  change of coordinates
of the parameter space describing the two dilations.
We map  $(\alpha, \beta)$ coordinates of parameter space to $(\um,\vm)$-coordinates, given by
\begin{equation}\label{eqn:uv-coords}
(\um,\vm) := (\frac{1}{\alpha}, \frac{\beta}{\alpha}).
\end{equation}
 The  map $(\alpha, \beta) \mapsto (\frac{1}{\alpha}, \frac{\beta}{\alpha})$ is an 
 involution sending the open first quadrant to itself, 
 hence in the other direction we have $(\alpha, \beta) = (\frac{1}{\um}, \frac{\vm}{\um})$.

The positive dilation part of the set $S$ is pictured in $(\um,\vm)$-coordinates in Figure \ref{fig:uv-coord}.

\begin{figure}[h]
\begin{center}
\begin{tikzpicture}[scale=.50]
  \draw[->] (0,0) -- (10,0) node[right] {$\um=1/\alpha$};
  \draw[->] (0,0) -- (0,10) node[above] {$\vm=\beta/\alpha$};
  \draw[scale=1.8] (1,0.1) -- (1,-0.1) node[below] {1};
  \draw[scale=1.8] (0.1,1) -- (-0.1,1) node[left] {1};
  
  \draw[scale=1.8,domain=0:5.2,smooth,variable=\x, blue] plot ({\x},{1});
  \draw[scale=1.8,domain=0:5.2,smooth,variable=\x, blue] plot ({\x},{2});
  \draw[scale=1.8,domain=0:5.2,smooth,variable=\x, blue] plot ({\x},{3});
  \draw[scale=1.8,domain=0:5.2,smooth,variable=\x, blue] plot ({\x},{4});
  \draw[scale=1.8,domain=0:5.2,smooth,variable=\x, blue] plot ({\x},{5});
  \draw[scale=1.8,domain=0:5.2,smooth,variable=\y, red]  plot ({1},{\y});
  \draw[scale=1.8,domain=0:5.2,smooth,variable=\y, red]  plot ({2},{\y});
  \draw[scale=1.8,domain=0:5.2,smooth,variable=\y, red]  plot ({3},{\y});
  \draw[scale=1.8,domain=0:5.2,smooth,variable=\y, red]  plot ({4},{\y});
  \draw[scale=1.8,domain=0:5.2,smooth,variable=\y, red]  plot ({5},{\y});
  \draw[scale=1.8,domain=1.24:5.4,smooth,variable=\x, green] plot ({\x},{\x/(\x-1)});
  \draw[scale=1.8,domain=1.62:5.4,smooth,variable=\x, green] plot ({\x},{2*\x/(\x-1)});
  \draw[scale=1.8,domain=2.47:5.4,smooth,variable=\x, green] plot ({\x},{\x/(\x-2)});
\end{tikzpicture}
\end{center}
\caption{Positive dilation solutions in $S$  in  $(\um,\vm)$-coordinates:
$\um = \frac{1}{\alpha}, \, \vm= \frac{\beta}{\alpha}$.}
\label{fig:uv-coord}
\end{figure}

In the new coordinates, Theorem \ref{thm:symmetries-p} says that 
solutions to $[f_{1/\um}, f_{\vm/\um}] \geq 0$ are  preserved under the maps
\[ 
(\um, \vm) \mapsto (m \um, \vm) \quad\text{and}\quad 
(\um, \vm)\mapsto (\um, m\vm) 
\]
for any integer $m\geq 1$.
These  symmetries of the nonnegative commutator condition \eqref{eqn:ineq} are  visually apparent after this  coordinate change.

%
%
\subsection{Lattice disjointness criterion: positive dilations}
\label{subsec:lattice-disjoint}

{ 
The  nonnegative commutator relation has a convenient reformulation in terms of the new parameters 
$(\um, \vm)$ as follows. 
It is a geometric criterion which involves  a  disjointness property
of the rectangular  lattice $\Lambda_{\um, \vm}= \um \ZZ \times \vm \ZZ$ in $\RR^2$ 
from an ``enlarged diagonal set''
$\mathcal{D}$.

\begin{defi}
The  {\em enlarged diagonal set } 
$\cD$ in $\RR^2$ is  the region
\begin{align}\label{def:regionD}
\cD :=& \bigcup_{n \in \ZZ} \{ (x,y) : n < x < n+1 \quad \mbox{and} \quad  n < y < n+1 \} \nonumber \\
 =&  \bigcup_{n \in \ZZ} \{ x \in (n, n+1)\} \times \{ y \in (n, n+1)\}.
\end{align}
\end{defi}

The  region $\mathcal{D}$ 
is a disjoint union of open unit squares. 
  Each open unit square  is symmetric by reflection across the diagonal $(x,y) \mapsto (y,x)$.
We can also characterize this set in terms of rounding functions, by 
\begin{align} 
\cD &= \{ (x,y) : \floor{x} < y < \ceil{x} \} 
= \{ (x,y) : \floor{y} < x < \ceil{y} \}. 
\end{align}
The set $\cD$ is pictured in Figure \ref{fig:diagonal-pos} below.

\begin{figure}[h]
\begin{center}
\begin{tikzpicture}[scale=1]
  \draw[->] (2,1) -- (5.2,1) node[right] {$x$};
  \draw[->] (1,2) -- (1,5.2) node[above] {$y$};
  \draw[-] (-0.2,1) -- (0,1); 
  \draw[-] (1,-0.2) -- (1,0);
  \draw[scale=1] (2,1.1) -- (2,1-0.1) node[below] {1};
  \draw[scale=1] (1.1,2) -- (1-0.1,2) node[left] {1};
  
  \foreach \n in {0,1,2,3,4} {
    \fill[gray,nearly transparent] (\n,\n) -- (\n,\n+1) -- (\n+1,\n+1) -- (\n+1,\n) -- cycle;
    \draw[dashed] (\n,\n) -- (\n,\n+1);
    \draw[dashed] (\n,\n+1) -- (\n+1,\n+1);
    \draw[dashed] (\n+1,\n+1) -- (\n+1,\n);
    \draw[dashed] (\n+1,\n) -- (\n,\n);
  }
\end{tikzpicture}
\end{center}
\caption{Region $\cD= \{\floor{y}<x<\ceil{y}\}$ of $\RR^2$, in gray.}
\label{fig:diagonal-pos}
\end{figure}

We also relate this geometric criterion to a disjoint reduced Beatty sequence criterion,
given in (P2) below. 
Recall from Section \ref{subsec:beatty} that for $u >0$ the reduced  Beatty sequence 
$\sB_0(u)$  in $\ZZ$ is given by 
$$
\sB_0(u) := \{ \floor x : \,  x \in u\ZZ \smallsetminus \ZZ\} .
$$

\begin{prop}[Nonnegative Commutator Relation: Lattice Disjointness Criterion]
\label{lem:diagonal-pos}

For $\um,\vm> 0$, the following three properties are equivalent.
\begin{enumerate}
\item[(P1)] The nonnegative commutator relation holds:
$$
[f_{1/\um}, f_{\vm/\um}](x) \geq 0\quad \mbox{for all} \quad x \in \RR.
$$ 

\item[(P2)]  The two-dimensional rectangular lattice
$\Lambda_{\um,\vm} := \ZZ \um \times \ZZ \vm = \{ (m\um, n\vm) : m,n\in\ZZ\} $
in $\RR^2$ is disjoint from the enlarged diagonal region $\mathcal{D}= \{(x, y): \floor{y} < x < \ceil{y}\}$.
 That is,  
\begin{equation}\label{eq:diag}
 \Lambda_{\um,\vm} \bigcap \mathcal{D} = \emptyset.
\end{equation}
 
\item[(P3)] The reduced Beatty sequences $\sB_0(\um)$ and $\sB_0(\vm)$ in $\ZZ$ are disjoint. 
That is,
\begin{equation}
\sB_0(\um) \bigcap \sB_0(\vm) = \emptyset.
\end{equation}
\end{enumerate}
\end{prop}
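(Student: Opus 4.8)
The plan is to establish the equivalence of (P1), (P2), (P3) in a cycle, using the rounding function criterion from Proposition \ref{prop:rounding-pos} as the bridge to (P1) and a direct translation of the definitions for the rest. The key observation is that the upper rounding criterion (R2), namely $\ceil{n}_\alpha \leq \ceil{n}_\beta$ for all $n \in \ZZ$, when transported into $(\um,\vm)$-coordinates via $\alpha = \frac{1}{\um}$, $\beta = \frac{\vm}{\um}$, becomes a statement comparing $\ceil{n}_{1/\um} = \frac{1}{\um}\ceil{\um n}$-type quantities, which after scaling can be read off the integer lattice $\Lambda_{\um,\vm}$. So the heart of the matter is a clean reformulation of ``$\ceil{x}_\alpha \le \ceil{x}_\beta$ at integer points'' as ``no lattice point of $\um\ZZ \times \vm\ZZ$ lies strictly inside one of the diagonal open squares.''

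\medskip

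First I would prove (P1) $\Leftrightarrow$ (P2). By Proposition \ref{prop:rounding-pos} (equivalence (R1) $\Leftrightarrow$ (R2)), (P1) holds iff $\ceil{n}_{1/\um} \leq \ceil{n}_{\vm/\um}$ for all $n \in \ZZ$. Multiplying through by $\um > 0$, and writing $n = \um^{-1}\cdot(\um n)$ carefully, I would unwind $\um\ceil{n}_{1/\um} = \um \cdot \frac{1}{\um}\ceil{\um n} = \ceil{\um n}$ — wait, more precisely $\ceil{n}_{1/\um} = \frac{1}{\um}\ceil{\um n}$, so the inequality reads $\frac{1}{\um}\ceil{\um n} \le \frac{\vm}{\um}\ceil{\frac{\um}{\vm}n}$, i.e. $\ceil{\um n} \le \vm \ceil{\frac{\um n}{\vm}}$. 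Here it is cleaner to substitute: as $n$ ranges over $\ZZ$, the value $\um n$ ranges over the set $\um\ZZ$, so the condition becomes: for every $t \in \um\ZZ$, $\ceil{t} \le \vm\ceil{t/\vm} = \ceil{t}_\vm$. Now $\ceil{t}_\vm$ is the smallest element of $\vm\ZZ$ that is $\ge t$, and $\ceil{t}$ is the smallest integer $\ge t$; the inequality $\ceil{t} \le \ceil{t}_\vm$ fails exactly when there is an element $s \in \vm\ZZ$ with $t \le s < \ceil{t}$, i.e. with $\floor{t} < s < \ceil{t}$ (using that $s \ge t > \floor t$ when $t \notin \ZZ$; the case $t \in \ZZ$ gives $\ceil t = t \le \ceil t_\vm$ automatically). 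This says precisely that the point $(t,s) \in \um\ZZ\times\vm\ZZ$ lies in $\cD = \{(x,y): \floor x < y < \ceil x\}$ after exchanging coordinates — or, being careful about which coordinate is which, that $(s,t)$ lies in the symmetric description $\{(x,y): \floor y < x < \ceil y\}$. Since $\cD$ is symmetric under $(x,y)\mapsto(y,x)$, either way the negation of (P1) is exactly the existence of a lattice point of $\Lambda_{\um,\vm}$ in $\cD$, which is the negation of (P2).

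\medskip

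Next I would prove (P2) $\Leftrightarrow$ (P3). A lattice point $(m\um, n\vm) \in \Lambda_{\um,\vm}$ lies in $\cD$ iff there is an integer $k$ with $k < m\um < k+1$ and $k < n\vm < k+1$; equivalently, $m\um \notin \ZZ$, $n\vm \notin \ZZ$, and $\floor{m\um} = \floor{n\vm} = k$. So $\Lambda_{\um,\vm} \cap \cD \ne \emptyset$ iff there exist $m,n\in\ZZ$ with $m\um \notin \ZZ$, $n\vm \notin\ZZ$, and $\floor{m\um} = \floor{n\vm}$ — which is precisely the statement that the reduced Beatty sequences $\sB_0(\um) = \{\floor{m\um}: m\um\notin\ZZ\}$ and $\sB_0(\vm) = \{\floor{n\vm}: n\vm\notin\ZZ\}$ share a common element, i.e. the negation of (P3). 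This direction is essentially a direct unwinding of Definition \ref{def:45}.

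\medskip

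\textbf{Main obstacle.} I expect the only real subtlety to be the careful bookkeeping in the (P1) $\Leftrightarrow$ (P2) step around the endpoint/integrality cases: one must check that when $t = \um n \in \ZZ$ the inequality is automatic (so integer lattice points on the diagonal, which are \emph{not} in the open region $\cD$, cause no trouble), and one must correctly track which of $\um n$ and $\vm(\cdot)$ plays the role of $x$ versus $y$ in the two equivalent descriptions of $\cD$ — the reflection symmetry of $\cD$ is what rescues the argument if one picks the ``wrong'' orientation. Once the substitution $t = \um n$ and the characterization of $\ceil{t}_\vm$ as the least element of $\vm\ZZ$ above $t$ are in place, everything reduces to the one-line observation that $\ceil{t} \le \ceil{t}_\vm$ fails iff some $s\in\vm\ZZ$ sits strictly between $\floor t$ and $\ceil t$. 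The rest is definition-chasing, and I would present it compactly as a chain of "iff"s mirroring the one already used in the proof of Proposition \ref{prop:rounding-pos}.
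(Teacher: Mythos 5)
Your overall approach — reduce (P1) to the rounding-function criterion, reformulate as ``no lattice point of $\um\ZZ\times\vm\ZZ$ in $\cD$,'' then read off (P3) from the definitions — is exactly the paper's approach, and your treatment of (P2) $\Leftrightarrow$ (P3) is fine. But there is a genuine gap in your (P1) $\Leftrightarrow$ (P2) step, at the place where you write that the existence of $s\in\vm\ZZ$ with $t\le s<\ceil t$ is the same as (``i.e.'') the existence of $s\in\vm\ZZ$ with $\floor t<s<\ceil t$. The forward direction is what your parenthetical justifies, but the reverse direction is false at the level of a single $t$: it is entirely possible that some $s\in\vm\ZZ$ satisfies $\floor t<s<t$ while the \emph{smallest} element of $\vm\ZZ$ that is $\ge t$ is $\ge\ceil t$. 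Concretely, take $\um=1/2$, $\vm=13/10$, $t=3/2\in\um\ZZ$, $s=13/10\in\vm\ZZ$: then $(t,s)\in\cD$ but $\ceil{t}_\vm=26/10>2=\ceil t$, so the rounding inequality does not fail at this $t$. What rescues the equivalence is the \emph{point-reflection} symmetry $(x,y)\mapsto(-x,-y)$, which fixes both $\Lambda_{\um,\vm}$ and $\cD$: if $(t,s)\in\Lambda\cap\cD$ with $s<t$, then $(-t,-s)\in\Lambda\cap\cD$ with $-t\le -s<\ceil{-t}$, and the rounding inequality does fail at $-t$. This is precisely what the paper's proof does, introducing the ``upper half'' $\cD_0=\{(x,y):x\le y<\ceil x\}$ and writing $\cD=\cD_0\cup(-\cD_0)$. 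You invoke only the transposition symmetry $(x,y)\mapsto(y,x)$ of $\cD$, which sorts out your bookkeeping worry about which coordinate is which but does not address this gap; the symmetry you need is the $180^\circ$ rotation.
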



\begin{proof} 
To establish $(P1) \Leftrightarrow  (P2)$  we prove the contrapositive.
By Proposition \ref{prop:rounding-pos}, the commutator inequality $[f_{1/\um}, f_{\vm/\um}] \geq 0$ 
fails if and only if 
\begin{equation*}
\ceil{n}_{1/\um} > \ceil{n}_{\vm/\um} \quad\text{for some }n\in\ZZ.
\end{equation*}
By multiplying this inequality by $\um$ and expanding $\ceil{n}_{1/\um} = \frac{1}{\um}\ceil{\um  n}_1$, 
we get the equivalent condition
\begin{equation}\label{eq:rounding-pos2}
\ceil{n \um}_{1} > \ceil{n \um}_{\vm} \quad\text{for some }n\in\ZZ.
\end{equation}
If true, this means that the $\vm$-multiple $\ceil{n \um}_{\vm}$ lies in the half-open interval 
$\{x \in \RR : n \um \leq x < \ceil{n\um}\}$,
or in other words, 
\begin{equation}\label{eq:disjoint-pos1}
\text{there exist } m, n \in \ZZ \text{ such that }  n \um \leq m \vm < \ceil{n \um}
\end{equation}
where we pick the integer $m$ to satisfy $\ceil{n \um}_{\vm} = m \vm$.
Conversely, \eqref{eq:disjoint-pos1} implies \eqref{eq:rounding-pos2} so these conditions are equivalent.

Now consider the lattice $ \Lambda_{\um,\vm} = \{ (m \um, n \vm) : m,n\in\ZZ\} \subset \RR^2$.
Condition \eqref{eq:disjoint-pos1} says:
$$ 
[f_{1/\um}, f_{\vm/\um}] \geq 0 \,\, \mbox{fails} \quad \Leftrightarrow \quad
\Lambda_{\um,\vm} \, \mbox{intersects the region} \,\, 
\cD_0 = \{ (x,y) :  x \leq y < \ceil{x} \} \subset \RR^2
$$
Note that $\cD_0$ is  the ``upper half'' of the region $\cD = \{ (x,y) : \floor{x} < y < \ceil{x} \}$.
The ``lower half'' is the image of $\cD_0$ under the $180^\circ$ rotation $(x,y) \mapsto (-x,-y)$. 
If we let $-\cD_0$ denote the image of $\cD_0$ under this rotation, then $\cD = \cD_0\cup(-\cD_0)$.
Since the lattice $\Lambda_{u,v}$ is fixed by the rotation $(x,y) \mapsto (-x,-y)$, we have
\begin{align*}
\Lambda_{\um,\vm} \text{ intersects } \cD_0 &\quad\Leftrightarrow\quad 
 -\Lambda_{\um,\vm}=\Lambda_{\um,\vm} \text{ intersects } -\cD_0\\
 &\quad\Leftrightarrow\quad 
 \Lambda_{\um,\vm} \text{ intersects } (-\cD_0)\cup \cD_0 = \cD.
\end{align*}
The contrapositive is proved. 

To establish $(P2) \Leftrightarrow  (P3)$, we observe that
\begin{align*}
 \Lambda_{\um,\vm} \bigcap \mathcal{D} = \emptyset  &\quad\Leftrightarrow\quad
  \text{ for all } (k,\ell) \in \ZZ^2, \quad  (k \um, \ell \vm) \not\in \cD\\
 &\quad\Leftrightarrow\quad  \text{ if } \floor{k \um}=m  \text{ and }  \floor{\ell \vm}= m,  \text{ then } 
  k \um \in \ZZ \, \text{ or } \ell \vm \in \ZZ \\
 &\quad\Leftrightarrow\quad  \text{ for all }m \in \ZZ,\quad m\not\in \sB_0(\um) \bigcap \sB_0(\vm) \\
 &\quad\Leftrightarrow\quad \,\sB_0(\um) \bigcap \sB_0(\vm)  =\emptyset,
\end{align*}
as required.
\end{proof}

%
%
\subsection{Birational symmetry property for positive dilations: Proof of Theorem \ref{thm:symmetries2}}

Given the criterion of Proposition \ref{lem:diagonal-pos}, the symmetries of the nonnegative commutator relation may be deduced from the symmetries of $\cD$.
Recall that the region $\mathcal{D}$ is symmetric by reflection across the diagonal $(x,y) \mapsto (y,x)$.
Combining this observation with Proposition \ref{lem:diagonal-pos} yields
an extra symmetry of the commutator relation for
 positive dilation factors.

\begin{lem}[Birational  Symmetry Property of $S$]
\label{lem:symm-prop}
Suppose $\um, \vm >0$. Then 
\begin{enumerate}
\item Given parameters $\um,\vm>0$,  the nonnegative commutator relation 
$[f_{1/\um}, f_{\vm/\um}] \geq 0 $ holds
if and only if $ [f_{1/\vm}, f_{\um/\vm}] \geq 0$ holds.

\item The positive dilation part of the set $S$ viewed in $(\um,\vm)$-coordinates is invariant under the interchange of $\um$ and $\vm$.
\end{enumerate}
\end{lem}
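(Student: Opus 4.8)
The plan is to reduce Lemma~\ref{lem:symm-prop} entirely to the lattice disjointness criterion (P2) of Proposition~\ref{lem:diagonal-pos} together with the reflection symmetry of the region $\cD$, which has already been recorded above. The two parts of the lemma are really the same statement phrased differently, so I would prove part (1) and then observe that part (2) is an immediate restatement.

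For part (1), fix $\um,\vm>0$. By Proposition~\ref{lem:diagonal-pos}, the relation $[f_{1/\um}, f_{\vm/\um}]\geq 0$ is equivalent to $\Lambda_{\um,\vm}\cap\cD=\emptyset$, where $\Lambda_{\um,\vm}=\um\ZZ\times\vm\ZZ$. Consider the coordinate-swap map $\sigma\colon\RR^2\to\RR^2$, $\sigma(x,y)=(y,x)$. This is a linear involution, and it carries $\Lambda_{\um,\vm}$ bijectively onto $\Lambda_{\vm,\um}=\vm\ZZ\times\um\ZZ$. Moreover, as noted in the discussion preceding the lemma, each open unit square comprising $\cD$ is symmetric under reflection across the diagonal, and since the collection of these squares is indexed by $n\in\ZZ$ in a way that is itself symmetric under $\sigma$ (the square over $(n,n+1)\times(n,n+1)$ maps to itself), we have $\sigma(\cD)=\cD$. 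Therefore
\[
\Lambda_{\um,\vm}\cap\cD=\emptyset
\quad\Longleftrightarrow\quad
\sigma\bigl(\Lambda_{\um,\vm}\cap\cD\bigr)=\emptyset
\quad\Longleftrightarrow\quad
\Lambda_{\vm,\um}\cap\cD=\emptyset.
\]
Applying Proposition~\ref{lem:diagonal-pos} again in the other direction, $\Lambda_{\vm,\um}\cap\cD=\emptyset$ is equivalent to $[f_{1/\vm}, f_{\um/\vm}]\geq 0$. Chaining these equivalences gives part (1). (Alternatively, one can argue via criterion (P3): $\sB_0(\um)\cap\sB_0(\vm)=\emptyset$ is literally symmetric in $\um$ and $\vm$, so the equivalence is immediate; but the lattice picture makes the role of the reflection symmetry of $\cD$ most transparent.)

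For part (2), recall the positive dilation part of $S$ in $(\um,\vm)$-coordinates is the set of $(\um,\vm)$ with $\um,\vm>0$ for which $[f_{1/\um}, f_{\vm/\um}]\geq 0$ holds. Part (1) says exactly that $(\um,\vm)$ lies in this set if and only if $(\vm,\um)$ does, i.e.\ the set is invariant under $(\um,\vm)\mapsto(\vm,\um)$, which is the assertion. Finally, translating the interchange $(\um,\vm)\mapsto(\vm,\um)$ back through the change of coordinates $(\um,\vm)=(\tfrac1\alpha,\tfrac\beta\alpha)$ shows it corresponds to the birational involution $(\alpha,\beta)\mapsto(\tfrac\alpha\beta,\tfrac1\beta)$ of the open first quadrant, which yields Theorem~\ref{thm:symmetries2}.

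I do not anticipate a genuine obstacle here: the only point requiring a moment's care is verifying $\sigma(\cD)=\cD$, i.e.\ that the reflection across the main diagonal preserves the union of open unit squares $\bigcup_n (n,n+1)\times(n,n+1)$ — this holds because reflection fixes each individual square set-theoretically. Everything else is a formal chain of equivalences through Proposition~\ref{lem:diagonal-pos}.
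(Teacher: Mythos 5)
Your proof is correct and follows essentially the same route as the paper: reduce via Proposition~\ref{lem:diagonal-pos} to the lattice disjointness criterion (P2), then use the invariance of $\cD$ under reflection across the diagonal together with the fact that this reflection carries $\Lambda_{\um,\vm}$ to $\Lambda_{\vm,\um}$. The paper's argument is terser but identical in substance; your parenthetical remark that criterion (P3) gives an even more immediate symmetry argument is a nice observation, though not used in the paper's own proof.
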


\begin{proof}
(1) Interchanging $\um$ and $\vm$ has the effect of reflecting the lattice 
$\Lambda_{\um,\vm}$ across the line of slope 1 through the origin. The enlarged diagonal region $\cD$ is preserved under this reflection, so disjointness of 
$\Lambda_{\um,\vm}$ and $\cD$ is preserved under interchanging $\um$ and $\vm$.
By Proposition \ref{lem:diagonal-pos}, this disjointness condition (P2) is equivalent to membership in $S$ (P1).

(2) This restates (1), using the definition of the nonnegative commutator set $S$. 
\end{proof}

\begin{proof}[Proof of Theorem \ref{thm:symmetries2}]
We are to show that if $\alpha, \beta >0$ and $(\alpha,\beta)\in S$, then $(\frac1\beta, \frac{\alpha}{\beta}) \in S$.

Written in  $(\alpha, \beta)$-coordinates,   Lemma  \ref{lem:symm-prop}  (1) gives the result. 
Namely, for  $\alpha, \beta >0$  set
  $\alpha= \frac{1}{\um}$ and  $\beta= \frac{\vm}{\um}$, 
whence  $\frac{1}{\vm} = \frac{\alpha}{\beta}$ and $\frac{\um}{\vm} = \frac{1}{\beta}$.
Lemma  \ref{lem:symm-prop}  (1) concludes that
if   $(\alpha, \beta) = (\frac{1}{\um},  \frac{\vm}{\um}) \in S$, 
then $(\frac{\alpha}{\beta} , \frac{1}{\beta}) = (\frac{1}{\vm}, \frac{\um}{\vm}) \in S$.
\end{proof}

%
%
\subsection{Proof of sufficiency in Theorem \ref{thm:positive}}
\label{subsec:pos-sufficient}
The  sufficiency  direction of Theorem \ref{thm:positive} for positive dilations $\alpha, \beta>0$
asserts that  $(\alpha, \beta) \in S$  
whenever $m\alpha + n \frac{\alpha}{\beta} =1$ for some integers $m, n \ge 0$.
Restated in $(\um, \vm)$-coordinates, this is equivalent to the following proposition.

\begin{thm}[Sufficiency condition in $(\um, \vm)$-coordinates]
\label{thm:pos-sufficient}
Given parameters $\um, \vm>0$, 
suppose there are integers ${m,n\geq 0}$ such that
\begin{equation*}
\frac{m}{\um} + \frac{n}{\vm} = 1.
\end{equation*}
Then the nonnegative commutator relation $[f_{1/\um}, f_{\vm/\um}] \geq 0$ is satisfied.
\end{thm}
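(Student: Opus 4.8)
The plan is to establish Theorem~\ref{thm:pos-sufficient} through the lattice disjointness criterion of Proposition~\ref{lem:diagonal-pos}: it suffices to show that the hypothesis $\frac{m}{\um}+\frac{n}{\vm}=1$ with integers $m,n\geq 0$ forces the rectangular lattice $\Lambda_{\um,\vm}=\um\ZZ\times\vm\ZZ$ to be disjoint from the enlarged diagonal region $\cD=\{(x,y):\floor{x}<y<\ceil{x}\}$, equivalently (criterion (P3)) that the reduced Beatty sequences $\sB_0(\um)$ and $\sB_0(\vm)$ are disjoint. First I would dispatch the degenerate case where one of $m,n$ vanishes: they cannot both vanish, since then the hypothesis would read $0=1$, so if $n=0$ then $\um=m$ is a positive integer, whence $\sB_0(\um)=\emptyset$ and (P3) holds trivially; the case $m=0$ is symmetric. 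Isolating these cases is necessary precisely because the argument below will use no irrationality of $\um,\vm$, in contrast to the Skolem--Bang theorem.

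In the remaining case $m,n\geq 1$ the hypothesis forces $\um>m\geq 1$ and $\vm>n\geq 1$. Suppose for contradiction that $\Lambda_{\um,\vm}\cap\cD\neq\emptyset$, say $(k\um,\ell\vm)\in\cD$ for some $k,\ell\in\ZZ$. Since $\cD$ is a disjoint union of open unit squares with integer corners, there is an integer $N$ with $N<k\um<N+1$ and $N<\ell\vm<N+1$. Both $\Lambda_{\um,\vm}$ and $\cD$ are invariant under the central reflection $(x,y)\mapsto(-x,-y)$ (replacing $(k,\ell,N)$ by $(-k,-\ell,-N-1)$), so after possibly applying it I may assume $k\geq 1$; here $k=0$ is impossible because $\um>1$ makes $0$ lie outside every interval $(N,N+1)$. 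Then $k\um\geq\um>1$ forces $N\geq 1$, and $\ell\vm>N\geq 1$ together with $\vm>0$ forces $\ell\geq 1$.

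To extract the contradiction I would estimate directly. From $1\leq N<k\um<N+1$ we get $\frac{k}{N+1}<\frac{1}{\um}<\frac{k}{N}$, and likewise $\frac{\ell}{N+1}<\frac{1}{\vm}<\frac{\ell}{N}$. Multiplying the first inequality chain by $m>0$, the second by $n>0$, and adding gives
\[
\frac{mk+n\ell}{N+1}\;<\;\frac{m}{\um}+\frac{n}{\vm}\;<\;\frac{mk+n\ell}{N}.
\]
The middle quantity equals $1$ by hypothesis, so $mk+n\ell<N+1$ and $mk+n\ell>N$; as $mk+n\ell$ and $N$ are integers this yields $N<mk+n\ell\leq N$, a contradiction. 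Therefore $\Lambda_{\um,\vm}\cap\cD=\emptyset$, and Proposition~\ref{lem:diagonal-pos} gives $[f_{1/\um},f_{\vm/\um}]\geq 0$.

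I expect this to be essentially the complete proof: the estimate is elementary and treats rational and irrational $\um,\vm$ on equal footing, which is exactly the point of the reformulation in Proposition~\ref{lem:diagonal-pos}. The only steps needing care are the bookkeeping that reduces to $k,\ell,N\geq 1$ using the central symmetry of $\Lambda_{\um,\vm}$ and $\cD$, and the preliminary separation of the cases where $\um$ or $\vm$ is an integer; neither should present a real obstacle.
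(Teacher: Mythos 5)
Your proof is correct, and it takes a genuinely different route in the heart of the argument. Both you and the paper reduce the statement to the lattice disjointness criterion of Proposition~\ref{lem:diagonal-pos} and handle the degenerate cases $m=0$ or $n=0$ identically (one of $\um,\vm$ is then an integer, so its reduced Beatty sequence is empty). For the main case $m,n\geq 1$, the paper isolates a geometric lemma (Lemma~\ref{lem:hyperbola-pos}): if $\tfrac{1}{\um}+\tfrac{1}{\vm}=1$ then $\Lambda_{\um,\vm}\cap\cD=\emptyset$, proved by observing that each candidate lattice point $(k\um,\ell\vm)$ with $k,\ell\geq 1$ sits on the decreasing rectangular hyperbola $\tfrac{k}{x}+\tfrac{\ell}{y}=1$, which meets the diagonal at the integer point $(k+\ell,k+\ell)$ and hence avoids $\cD$; the general case is then obtained by rescaling to the sublattice $\Lambda_{\um/m,\,\vm/n}$. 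You instead attack the general $\tfrac{m}{\um}+\tfrac{n}{\vm}=1$ directly: supposing $(k\um,\ell\vm)$ lies in a unit square $(N,N+1)^2$ and reducing to $k,\ell,N\geq 1$ by central symmetry, you convert the two sandwiching inequalities into $\tfrac{k}{N+1}<\tfrac{1}{\um}<\tfrac{k}{N}$ and $\tfrac{\ell}{N+1}<\tfrac{1}{\vm}<\tfrac{\ell}{N}$, take the $(m,n)$-combination, and land the integer $mk+n\ell$ strictly between $N$ and $N+1$, a contradiction. Your version avoids both the auxiliary lemma and the rescaling step, replacing the hyperbola-geometry picture with a crisp integrality argument; the paper's version has the virtue of making the geometric role of the hyperbola visible (and it reuses Lemma~\ref{lem:hyperbola-pos} as a conceptual anchor), but your inequality manipulation is shorter and equally transparent. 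One small remark: the reason $k=0$ is impossible is simply that $0$ is an integer and therefore lies in no interval $(N,N+1)$ — the fact that $\um>1$ is not needed there, though you do correctly use $\um>m\geq 1$ a line later to force $N\geq 1$.
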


This  proposition will follow using  the following lemma, which is the special case $(m,n)=(1,1)$.
\begin{lem}[Rectangular Hyberbola Sufficiency]
\label{lem:hyperbola-pos}
If $(\um,\vm)$ lies on the rectangular hyperbola $\frac{1}{x} + \frac{1}{y} = 1$, then the lattice 
$\Lambda_{\um,\vm}$ is disjoint from the set $\mathcal{D}$.
\end{lem}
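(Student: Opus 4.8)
The plan is to prove that when $\frac{1}{\um}+\frac{1}{\vm}=1$ (with $\um,\vm>0$, forcing $\um,\vm>1$), the lattice $\Lambda_{\um,\vm}=\um\ZZ\times\vm\ZZ$ avoids $\cD=\bigcup_{n\in\ZZ}(n,n+1)\times(n,n+1)$. Suppose for contradiction that some point $(k\um,\ell\vm)$ with $k,\ell\in\ZZ$ lies in $\cD$; by the $180^\circ$ rotational symmetry of both $\Lambda_{\um,\vm}$ and $\cD$ we may assume $k,\ell$ are chosen so that this point is in a square $(n,n+1)^2$ with $n\ge 0$ (the case $n=0$ giving $0<k\um<1$, $0<\ell\vm<1$ is impossible since $\um,\vm>1$, so in fact $n\ge 1$ and $k,\ell\ge 1$). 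Then $n<k\um<n+1$ and $n<\ell\vm<n+1$. First I would divide through: $\frac{n}{\um}<k<\frac{n+1}{\um}$ and $\frac{n}{\vm}<\ell<\frac{n+1}{\vm}$, so adding and using $\frac1\um+\frac1\vm=1$ gives $n<k+\ell<n+1$. But $k+\ell$ is an integer, so this is a contradiction.

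That kernel is essentially the classical Beatty argument (it is exactly the ``no integer lies strictly between $\frac{n}{\um}$ and $\frac{n+1}{\um}$'' phenomenon driving Rayleigh--Beatty), so the real content is handling the boundary/equality cases carefully. The key point I would emphasize is that a priori $k\um$ or $\ell\vm$ could be an integer, which is exactly why $\cD$ is defined with \emph{strict} inequalities and why we work with $\sB_0$ rather than $\sB$. If $(k\um,\ell\vm)\in\cD$ then $n<k\um<n+1$ with strict inequalities, so $k\um\notin\ZZ$ and likewise $\ell\vm\notin\ZZ$; hence the strict inequalities $\frac{n}{\um}<k$ and $k<\frac{n+1}{\um}$ are genuine (no degenerate equality), and similarly for $\ell$. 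Summing strict inequalities keeps them strict, so $n<k+\ell<n+1$ strictly, which an integer cannot satisfy. The only subtlety is confirming that the reduction to $n\ge 1$, $k,\ell\ge 1$ is legitimate, but that is immediate from $\um,\vm>1$ together with the rotation symmetry already recorded in the proof of Proposition~\ref{lem:diagonal-pos}.

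I expect the main obstacle to be purely expository rather than mathematical: making sure the writeup does not secretly assume $\um$ or $\vm$ is irrational. In the classical Beatty setting one takes $\um,\vm$ irrational precisely so the sequences \emph{partition} $\NN^+$; here we only want \emph{disjointness}, and the argument above never used irrationality — it only used that $k\um$ and $\ell\vm$ are non-integers \emph{at the hypothetical intersection point}, which is forced by $(k\um,\ell\vm)\in\cD$. So the lemma genuinely holds for all $\um,\vm$ on the hyperbola, rational or not, e.g. $(\um,\vm)=(2,2)$, and indeed for $(2,2)$ the lattice $2\ZZ\times 2\ZZ$ visibly misses $\cD$ since both coordinates are even integers. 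Once Lemma~\ref{lem:hyperbola-pos} is in hand, Theorem~\ref{thm:pos-sufficient} follows by applying the symmetries $(\um,\vm)\mapsto(m\um,\vm)$ and $(\um,\vm)\mapsto(\um,n\vm)$ from Theorem~\ref{thm:symmetries-p} (in $(\um,\vm)$-coordinates) to push a solution of $\frac{m}{\um}+\frac{n}{\vm}=1$ with $m,n\ge 1$ onto the hyperbola $\frac1{x}+\frac1{y}=1$, and by handling the degenerate cases $m=0$ (then $\vm=n$, so $\vm\ZZ\subset\ZZ$ and the second coordinate of every lattice point is an integer) and $n=0$ (then $\um=m$, similarly) directly; in each case the lattice meets $\cD$ nowhere because $\cD$ contains no points with an integer coordinate.
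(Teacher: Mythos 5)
Your proof is correct, and it takes a genuinely different (though closely related) route from the paper's. The paper argues geometrically: for integers $k,\ell\geq 1$ the lattice point $(k\um,\ell\vm)$ lies on the positive branch of the hyperbola $\frac{k}{x}+\frac{\ell}{y}=1$, which is strictly decreasing and crosses the diagonal at the integer point $(k+\ell,k+\ell)$; a decreasing curve through an integer diagonal point cannot enter any open diagonal square, so the whole hyperbola misses $\cD$. You instead run the classical Rayleigh--Beatty argument pointwise: assuming $(k\um,\ell\vm)\in(n,n+1)^2$, divide through by $\um$ and $\vm$ and add the resulting strict inequalities, using $\frac1\um+\frac1\vm=1$ to trap the integer $k+\ell$ strictly between $n$ and $n+1$. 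The two arguments are dual descriptions of the same obstruction (the integer $k+\ell$ is what forces the hyperbola through an integer diagonal point), but yours is more elementary and makes the connection to Beatty's theorem explicit, while the paper's version makes the geometry of Figure~\ref{fig:diagonal-pos} more visible and shows the slightly stronger fact that the entire hyperbola, not just the lattice point, avoids $\cD$. Your care about the strict inequalities in the definition of $\cD$, and your observation that no irrationality hypothesis is needed because $(k\um,\ell\vm)\in\cD$ already forces both coordinates to be non-integers, is exactly the right point to emphasize; that is precisely what distinguishes the lemma (which holds on the whole hyperbola, including rational points such as $(\um,\vm)=(2,2)$) from the classical partition theorem.
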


\begin{proof}
To show the lattice $\Lambda_{\um,\vm} = \{ (m\um, n\vm) : m,n \in\ZZ \}$ is disjoint from $\cD$,
it suffices to show they are disjoint in the positive quadrant since both sets are preserved by
  $(x,y) \mapsto (-x,-y)$ and $\cD$ does not intersect the second or fourth quadrants.
For any integers $m,n\geq 1$, the point $(m\um,n\vm)$ lies on the positive branch of the
 hyperbola $\frac{m}{x} + \frac{n}{y} = 1$. 
This curve is monotonically decreasing and intersects the diagonal at the integer point $(m+n, m+n)$.  
This implies the  hyperbola is disjoint from $\cD$, so
  $(m\um, n\vm)$ is not in $\cD$.
\end{proof}

\begin{proof}[Proof of  Theorem \ref{thm:pos-sufficient}]
There are three cases.
In each case, it suffices to show that condition (P2) of Proposition \ref{lem:diagonal-pos} 
holds as this is equivalent to the desired nonnegative commutator condition (P1).

{\em Case (i-c).}  Suppose $\um,\vm$ satisfy $\frac{m}{\um} + \frac{n}{\vm} = 1$ with both $m,n\geq 1$.
Setting $\um_0 = \frac{1}{m}\um$ and $\vm_0=\frac{1}{m}\vm$, 
we see that the lattice $\Lambda_{\um,\vm}$ is contained in the lattice $\Lambda_{\um_0, \vm_0}.$
By Lemma \ref{lem:hyperbola-pos} the lattice $\Lambda_{\um_0,\vm_0}$ does not intersect $\cD$, so neither does $\Lambda_{\um,\vm}$.

{\em Case(i-b).} If $n = 0$, then $\frac{m}{\um} = 1$ implies $\um= m$ is an integer. 
The region $\cD$ does not contain any points with integer $x$-coordinate, 
so $\Lambda_{\um,\vm}$ is disjoint from $\cD$. 

{\em Case(i-a).}  $m=0$ then $\frac{n}{\vm} = 1$ implies $\um = n$ is an integer, and the result follows by interchange of $\um$ and $\vm$ from the preceding case, using Lemma \ref{lem:symm-prop} (3). 
\end{proof}

Converting Theorem \ref{thm:pos-sufficient} from $(\um,\vm)$ coordinates  to $(\alpha,\beta)$ coordinates
 proves one direction of Theorem \ref{thm:positive}, 
 classifying positive solutions $\alpha,\beta$  satisfying 
 the nonnegative commutator relation $[f_\alpha,f_\beta]\geq 0$.

%
%
\section{Positive Dilations Classification: Necessity}
\label{sec:positive-N}
\setcounter{equation}{0}
In this section we prove the necessity of the criterion  of Theorem \ref{thm:positive}
for membership in $S$.
That is,  if positive dilations $\alpha, \beta$ satisfy $[f_\alpha, f_\beta] \geq 0$
then they necessarily  satisfy
\begin{equation}
\tag{\ref{eq:positive2}}
 m\alpha + n\frac{\alpha}{\beta} = 1 \quad\text{for some integers} \, m,n\geq 0.
\end{equation}


For this direction  of the proof we birationally transform the parameter space to 
a second set of coordinates, $(\talpha, \trho)$-coordinates, given by
\begin{equation}
(\talpha, \trho) := (\frac{1}{\um}, \frac{1}{\vm})= (\alpha, \frac{\alpha}{\beta}).
\end{equation}
Note that although $\talpha = \alpha$, 
we use a different variable name to remind ourselves that we are using a different coordinate system.
This map takes the open first quadrant of $(\alpha, \beta)$-coordinates onto the open first quadrant
in $(\talpha, \trho)$-coordinates, and its inverse map is  
$(\talpha, \trho) \to  (\talpha, \frac{\talpha}{\trho})= (\alpha, \beta)$.

Figure \ref{fig:rs-coord} pictures the positive dilation solutions of $S$ 
in  the $(\talpha, \trho)$-coordinate system.


\begin{figure}[h]
\begin{center}
\begin{tikzpicture}[scale=.50]
  \draw[->] (0,0) -- (10,0) node[right] {$\talpha=\alpha$};
  \draw[->] (0,0) -- (0,10) node[above] {$\trho = \alpha/\beta$};
  \draw[scale=8.6] (1,0.05) -- (1,-0.05) node[below] {$1$};
  \draw[scale=8.6] (0.05,1) -- (-0.05,1) node[left] {$1$};
  
  \draw[scale=8.6,domain=0:1.1,smooth,variable=\x, blue] plot ({\x},{1});
  \draw[scale=8.6,domain=0:1.1,smooth,variable=\x, blue] plot ({\x},{1/2});
  \draw[scale=8.6,domain=0:1.1,smooth,variable=\x, blue] plot ({\x},{1/3});
  \draw[scale=8.6,domain=0:1.1,smooth,variable=\x, blue] plot ({\x},{1/4});
  \draw[scale=8.6,domain=0:1.1,smooth,variable=\x, blue] plot ({\x},{1/5});
  
  \draw[scale=8.6,domain=0:1.1,smooth,variable=\y, red]  plot ({1},{\y});
  \draw[scale=8.6,domain=0:1.1,smooth,variable=\y, red]  plot ({1/2},{\y});
  \draw[scale=8.6,domain=0:1.1,smooth,variable=\y, red]  plot ({1/3},{\y});
  \draw[scale=8.6,domain=0:1.1,smooth,variable=\y, red]  plot ({1/4},{\y});
  \draw[scale=8.6,domain=0:1.1,smooth,variable=\y, red]  plot ({1/5},{\y});
  
  \draw[scale=8.6,domain=0:1,smooth,variable=\x, green] plot ({\x},{1-\x});
  \draw[scale=8.6,domain=0:0.5,smooth,variable=\x, green] plot ({\x},{1-2*\x});
  \draw[scale=8.6,domain=0:1,smooth,variable=\x, green] plot ({\x},{1/2-\x/2});
  \draw[scale=8.6,domain=0:0.33,smooth,variable=\x, green] plot ({\x},{1-3*\x});
  \draw[scale=8.6,domain=0:1,smooth,variable=\x, green] plot ({\x},{1/3-\x/3});
  \draw[scale=8.6,domain=0:0.5,smooth,variable=\x, green] plot ({\x},{1/2-\x});
\end{tikzpicture}
\end{center}
\caption{Positive dilation solutions of $S$  in $(\talpha,\trho)$-coordinates: 
$\talpha=\alpha, \, \trho= \frac{\alpha}{\beta}$.}
\label{fig:rs-coord}
\end{figure} 

In  $(\talpha, \trho)$ coordinates the set $S$ consists of straight lines: 
either vertical or horizontal half-infinite lines,
or oblique line segments connecting the two axes.

%
%
\subsection{Torus subgroup criterion: positive dilations}

To prove  sufficiency of the classification given in Theorem \ref{thm:positive}, 
 we establish a criterion for nonnegative commutator, given in
terms of  $(\talpha, \trho)$-coordinates.
 It is expressed in terms of 
a cyclic subgroup of the torus $\TT = \RR^2/\ZZ^2$ avoiding a set $\cCbar_{\talpha, \trho}$
which also varies with the parameters. 
The set to be avoided, viewed in $\RR^2$  
is an open rectangular region  with one corner at the origin. 

\begin{defi}\label{defn:corner-rectangle}
We define the {\em corner rectangle} 
$\cC_{\talpha,\trho}$ to be the  open  region
\begin{equation}
\cC_{\talpha,\trho} := \{(x,y) : 0<x<\talpha, \, 0<y<\trho \} \subset \RR^2,
\end{equation}
and let $\cC = \cC_{1,1}$, i.e.
\begin{equation}
\cC := {\{ (x,y) : 0 < x,y < 1\}} \subset \RR^2.
\end{equation}
\end{defi}

Given a real number $x$, we let $\widetilde{x}$ denote its image
 under the quotient map $\RR \to \RR/\ZZ$.
We project $\cC_{\talpha,\trho}$ to  
the torus $\TT= \RR^2/\ZZ^2$,
and let $\cCbar_{\talpha,\trho}$ denote  the image of $\cC_{\talpha,\trho}$
via coordinatewise projection  $\RR^2 \to \TT$.
 The region  $\cCbar_{\talpha,\trho}$ in the $\TT$ is either an open rectangle 
(if $\talpha,\trho\leq 1$), or an open annulus (if either $\talpha>1$ or $\trho>1$), or the whole torus (if both $\talpha,\trho>1$).
See Figure \ref{fig:torus-corner-rectangle}.


\begin{figure}[h]
\begin{center}
  \begin{tikzpicture}[scale=3]
  \draw[-] (4/9,0) -- (1,0);
  \draw[-] (0,1/3) -- (0,1);
  \draw[dotted] (1,0) -- (1,1);
  \draw[dotted] (0,1) -- (1,1);
  
  \node[draw,circle,inner sep=0.5pt,fill,red] at (4/9,1/3) {};
  \draw[-] (4/9,1/3) node[right] {$(\talpha, \trho)$};
  
  \draw[dashed] (0,0) -- (4/9,0);
  \draw[dashed] (0,1/3) -- (4/9,1/3);
  \draw[dashed] (0,0) -- (0,1/3);
  \draw[dashed] (4/9,0) -- (4/9,1/3);
    
  \fill[gray,nearly transparent] (0,0) -- (0,1/3) -- (4/9,1/3) -- (4/9,0) -- cycle; 
  \end{tikzpicture}
  \qquad\qquad
  \begin{tikzpicture}[scale=3]
  \draw[-] (0,0) -- (1,0);
  \draw[dotted] (1,0) -- (1,1);
  \draw[dotted] (0,1) -- (1,1);
  
  \node[draw,circle,inner sep=0.5pt,fill,red] at (2/5,6/5) {};
  \draw[-] (2/5,6/5) node[right] {$(\talpha, \trho)$};
  
  \draw[dashed] (0,0) -- (0,1);
  \draw[dashed] (2/5,0) -- (2/5,1);
  
  \fill[gray,nearly transparent] (0,0) -- (0,6/5) -- (2/5,6/5) -- (2/5,0) -- cycle; 
  \end{tikzpicture}
  \qquad\qquad
  \begin{tikzpicture}[scale=3]
  \draw[-] (0,0) -- (1,0);
  \draw[-] (0,0) -- (0,1);
  \draw[dotted] (1,0) -- (1,1);
  \draw[dotted] (0,1) -- (1,1);
  
  \node[draw,circle,inner sep=0.5pt,fill,red] at (8/7,6/5) {};
  \draw[-] (8/7,6/5) node[right] {$(\talpha, \trho)$};
  
  \fill[gray,nearly transparent] (0,0) -- (0,6/5) -- (8/7,6/5) -- (8/7,0) -- cycle; 
  \end{tikzpicture}
\end{center}
\caption{Corner rectangles $\cCbar_{\talpha, \trho}$ in the torus $\TT= \RR^2/\ZZ^2$ for  varying parameters 
$(\talpha, \trho)$.}
\label{fig:torus-corner-rectangle}
\end{figure} 

\begin{prop}[Nonnegative commutator relation: Torus subgroup criterion]
\label{prop:torus-pos}
For $\talpha, \trho>0$, the following conditions are equivalent.
\begin{enumerate}
\item[(Q1)] The nonnegative commutator relation holds:
\[
 [f_{\talpha}, f_{\talpha/\trho}] \geq 0 .
\] 

\item[(Q2)] The cyclic torus subgroup 
\[
\langle (\talpha, \trho) \rangle_{\TT} = {\{ (\widetilde{n\talpha}, \widetilde{n\trho})  : n\in \ZZ \}}  \subset \RR^2 /\ZZ^2 = \TT
\]
is disjoint from the  corner rectangle 
$\cCbar_{\talpha,\trho}= \{(\widetilde{x},\widetilde{y}) : 0<x<\talpha, \, 0<y<\trho \}$
in  $\TT$.
\end{enumerate}
\end{prop}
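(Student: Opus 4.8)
The plan is to reduce condition (Q1) to the rounding-function criterion (R2) of Proposition \ref{prop:rounding-pos}, then to translate the resulting inequality on integers $n$ into the geometric language of the torus. First I would record the translation between the two coordinate systems: since $(\talpha,\trho)=(\alpha,\alpha/\beta)$, we have $\alpha=\talpha$ and $\beta=\talpha/\trho$, so the dilated floor functions appearing in (Q1) are exactly $f_\alpha$ and $f_\beta$, and Proposition \ref{prop:rounding-pos} tells us that (Q1) holds if and only if $\lceil n\rceil_\alpha \le \lceil n\rceil_\beta$ for all $n\in\ZZ$, where $\lceil x\rceil_\gamma = \gamma\lceil x/\gamma\rceil$. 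The first real step is then to show that a single inequality $\lceil n\rceil_\alpha > \lceil n\rceil_\beta$ failing is equivalent to a statement about the fractional parts of $n\talpha$ and $n\trho$, so that the full family of inequalities becomes a disjointness statement for the cyclic subgroup $\langle(\talpha,\trho)\rangle_{\TT}$.

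The key computational step is the following reformulation. For a parameter $\gamma>0$ and an integer $n$, write $n\gamma = \lfloor n\gamma\rfloor + \{n\gamma\}$; then $\lceil n\rceil_\gamma = \gamma\lceil n/\gamma\rceil$, and a short computation (the same kind appearing in the proof of Proposition \ref{lem:diagonal-pos}) shows that $\lceil n\rceil_\gamma$ depends in a controlled way on where $n$ sits relative to the lattice $\gamma\ZZ$. More precisely, I would pass to the variables $u=1/\alpha=1/\talpha$ and $v=1/\beta=\trho/\talpha$, already used in Proposition \ref{lem:diagonal-pos}; there the inequality $\lceil n\rceil_{1/\um} > \lceil n\rceil_{\vm/\um}$ was shown to be equivalent to the existence of integers $m,n$ with $n\um \le m\vm < \lceil n\um\rceil$, i.e. to the lattice $\Lambda_{\um,\vm}$ meeting the half-diagonal region $\cD_0$. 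The task here is the analogous "dual" bookkeeping in $(\talpha,\trho)$-coordinates: I expect $\lceil n\rceil_\alpha > \lceil n\rceil_\beta$ to hold for some $n\in\ZZ$ if and only if some integer multiple $n(\talpha,\trho)$ of the point $(\talpha,\trho)$ lands, modulo $\ZZ^2$, inside the open corner rectangle $\cCbar_{\talpha,\trho}$ — intuitively, $n\talpha$ has just barely overshot an integer (its fractional part lies in $(0,\talpha)$ after an appropriate reindexing) while $n\trho$ has also just barely overshot the same integer. Establishing this equivalence, keeping careful track of strict versus non-strict inequalities and of the role of the origin/corner of the rectangle (which is excluded, matching the fact that $n\talpha\in\ZZ$ causes no violation), is the heart of the argument.

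I would then assemble the proof as the contrapositive of each direction, exactly in the style of Proposition \ref{lem:diagonal-pos}: (Q1) fails $\iff$ $\lceil n\rceil_\alpha > \lceil n\rceil_\beta$ for some $n$ $\iff$ $(\widetilde{n\talpha},\widetilde{n\trho}) \in \cCbar_{\talpha,\trho}$ for some $n$ $\iff$ $\langle(\talpha,\trho)\rangle_{\TT} \cap \cCbar_{\talpha,\trho} \ne \emptyset$, which is the negation of (Q2). The main obstacle I anticipate is the middle equivalence — correctly identifying which ``just-overshot'' configuration of $(\{n\talpha\},\{n\trho\})$ corresponds to the rounding inequality, and verifying that the open (rather than closed or half-open) corner rectangle is precisely the right region, so that integer values of $n\talpha$ or $n\trho$ (which correspond to the boundary of $\cC_{\talpha,\trho}$ touching the lattice $\ZZ^2$) are harmlessly excluded. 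A secondary point to handle cleanly is the symmetry under $(x,y)\mapsto(-x,-y)$: both the cyclic subgroup and the pair of regions $\cCbar_{\talpha,\trho}$ and its $180^\circ$ rotate behave well under negation of $n$, so it suffices to analyze, say, the configuration with small positive fractional parts and then invoke this symmetry, just as $\cD = \cD_0 \cup(-\cD_0)$ was used before.
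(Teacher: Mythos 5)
Your route is broadly sound but genuinely different from the one the paper actually takes, and you leave the step you yourself identify as ``the heart of the argument'' as an unverified expectation. The paper does \emph{not} go back to the integer-by-integer rounding criterion (R2) of Proposition~\ref{prop:rounding-pos} in the new coordinates. Instead it starts from the already-established lattice criterion (P2) of Proposition~\ref{lem:diagonal-pos} — that $[f_{\talpha}, f_{\talpha/\trho}]\ge 0$ iff $\Lambda_{\talpha^{-1},\trho^{-1}}\cap\cD=\emptyset$ — and performs a purely geometric translation: decompose $\cD$ as $\bigcup_{k\in\ZZ}\bigl((k,k)+\cC\bigr)$, so disjointness of the $2$-parameter lattice from $\cD$ becomes disjointness of the image of the $3$-parameter map $(m,n,k)\mapsto(m\talpha^{-1}+k,\,n\trho^{-1}+k)$ from the unit square $\cC$; then rescale by $(x,y)\mapsto(\talpha x,\trho y)$ to turn this into disjointness of $(m,n,k)\mapsto(m+k\talpha,\,n+k\trho)$ from $\cC_{\talpha,\trho}$; finally quotient by $\ZZ^2$, noting the image is $\ZZ^2$-invariant and projects exactly onto $\langle(\widetilde\talpha,\widetilde\trho)\rangle_{\TT}$, while $\cC_{\talpha,\trho}$ projects to $\cCbar_{\talpha,\trho}$. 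This buys two things your plan would have to earn by hand: the $\cD=\cD_0\cup(-\cD_0)$ symmetry bookkeeping is already consolidated inside (P2), and no tracking of strict-versus-nonstrict inequalities or ``barely-overshoot'' configurations is needed, since one simply chases set inclusions through linear maps.

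Your proposed middle equivalence is in fact correct, but be careful: the index does change. If $\lceil n\rceil_{\talpha}>\lceil n\rceil_{\talpha/\trho}$, the multiple witnessing $\langle(\widetilde\talpha,\widetilde\trho)\rangle_{\TT}\cap\cCbar_{\talpha,\trho}\ne\emptyset$ is $k=\lceil n/\talpha\rceil$, not $n$ itself (e.g.\ for $\talpha=\trho=2/3$, the rounding violation at $n=1$ corresponds to the torus point at $k=2$, since $1\cdot(\talpha,\trho)$ does not lie in $\cCbar_{\talpha,\trho}$). Your parenthetical ``after an appropriate reindexing'' acknowledges this, but reusing the letter $n$ for both obscures it. Moreover, in the converse direction ($\cCbar$-intersection at $k$ implies a rounding violation), the natural choice $n=a$ (where $0<k\talpha-a<\talpha$) only produces a violation when the corresponding lattice point lands in $\cD_0$ rather than in $-\cD_0$; in the other case you must pass to $-n$ via the $180^\circ$ symmetry you mention. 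So the ``dual bookkeeping'' you propose is nontrivial and you would essentially be re-proving the $\cD_0$-versus-$\cD$ analysis of Proposition~\ref{lem:diagonal-pos} a second time, which the paper's proof avoids entirely.
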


\begin{proof}
Let $\um = \talpha^{-1}$ and $ \vm = \trho^{-1}$.
By Proposition \ref{lem:diagonal-pos} the commutator relation $[f_{\talpha}, f_{\talpha/\trho}] \geq 0$ 
is equivalent to the lattice $\Lambda_{\talpha^{-1},\trho^{-1}}$ 
being disjoint from $\cD$ as subsets of $\RR^2$.
The lattice $\Lambda_{\talpha^{-1},\trho^{-1}}$ is the image of the map 
\begin{align*}
\varphi_{\talpha,\trho} : \ZZ^2 &\to \RR^2 \\
(m,n) &\mapsto (m\talpha^{-1}, n\trho^{-1} ) .
\end{align*}
Note that any point in $\cD$ 
may be uniquely expressed as the sum of a point $(k,k)$ for some integer $k$
and a point inside the open unit square 
$ \cC = {\{ (x,y)\in \RR^2 : 0 < x,y < 1\}} $.
Therefore the image of $\varphi_{\talpha,\trho}$ is disjoint from $\cD$ if and only if the image of 
$\varphi_{\talpha,\trho,1}$ is disjoint from $\cC$,
where we let
\begin{align*}
\varphi_{\talpha,\trho,1} : \ZZ^3 &\to \RR^2  \\
(m,n,k) &\mapsto (m\talpha^{-1} + k, n\trho^{-1} + k) .
\end{align*}
(Recall, by Proposition \ref{lem:diagonal-pos}, that this happens if and only if 
$[f_{1/\um}, f_{\vm/\um}] = [f_\talpha, f_{\talpha/\trho}] \geq 0$.)

Making a rescaling of $\RR^2$ by $(x,y) \mapsto (\talpha x, \trho y)$, 
the condition $[f_\talpha, f_{\talpha/\trho}] \geq 0$
is equivalent to the disjointness of the image of
\begin{align*}
\phi_{\talpha,\trho,1} : \ZZ^3 &\to \RR^2 \\
(m,n,k) &\mapsto (m + k\talpha, n + k\trho) 
\end{align*}
from the open corner rectangle
$$
\cC_{\talpha,\trho} = \{(x,y)\in \RR^2 :  0 < x < \talpha,\, 0< y < \trho \}.
$$
The image of $\phi_{\talpha,\trho,1}$  inside $\RR^2$ 
 is  disjoint from $\cC_{\talpha,\trho}$
 if and only if their projections under $\RR^2 \to \RR^2 / \ZZ^2$ remain disjoint, 
 since the image $\phi_{\talpha,\trho,1}(\ZZ^3)\subset\RR^2$ 
 is clearly invariant under $\ZZ^2$-translations.
The projection of $\phi_{\talpha,\trho,1}(\ZZ^3)$ to the torus $\TT = \RR^2 / \ZZ^2$ 
is the cyclic torus subgroup $\langle (\widetilde{\talpha},\widetilde{\trho}) \rangle_{\TT}$ 
generated by the image of $(\talpha,\trho)$.
The projection of $\cC_{\talpha, \trho}$ to the torus is $\cCbar_{\talpha,\trho}$.
This proves the equivalence of the disjointness criterion (Q2) with the nonnegative commutator relation (Q1).
\end{proof}

%
%
\subsection{Proof of necessity in Theorem \ref{thm:positive}}
\label{subsec:pos-necessary}

The necessity condition in Theorem \ref{thm:positive} states that if
positive $\alpha, \beta$ have $(\alpha, \beta) \in S$, then necessarily
$ m \alpha + n \frac{\alpha}{\beta} =1$ for some  integers $m, n \ge 0$.
Restated in the  $(\talpha,\trho)$ coordinates, this is equivalent to the following proposition.

\begin{thm}[Necessity  in $(\talpha, \trho)$-coordinates]
\label{thm:pos-necessary}
Suppose parameters $\talpha, \trho>0$ satisfy
the nonnegative commutator relation $[f_{\talpha}, f_{\talpha/\trho}] \geq 0$.
Then there are integers $m,n\geq 0$ such that
\begin{equation*}
{m}\talpha + {n}\trho = 1.
\end{equation*}
\end{thm}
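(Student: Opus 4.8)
The plan is to use the Torus Subgroup Criterion (Proposition~\ref{prop:torus-pos}): the hypothesis $[f_{\talpha}, f_{\talpha/\trho}] \geq 0$ is equivalent to the statement that the cyclic subgroup $\langle (\widetilde\talpha, \widetilde\trho)\rangle_{\TT}$ avoids the corner rectangle $\cCbar_{\talpha,\trho}$ in $\TT = \RR^2/\ZZ^2$. I want to extract from this avoidance statement an integer relation $m\talpha + n\trho = 1$. The key observation is that the point $\widetilde{(\talpha,\trho)}$ itself must lie on the \emph{boundary} of $\cCbar_{\talpha,\trho}$ once we account for everything: more precisely, I would look at the first few multiples $\widetilde{(n\talpha, n\trho)}$ and ask when a multiple is forced to fall inside the (open) corner rectangle. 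Since the generator $(\talpha,\trho)$ has the corner of the rectangle "pointing at it," small perturbations of the parameters would push a multiple inside, and avoidance forces an exact linear relation.

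\textbf{Main steps.} First I would reduce to the case $\talpha \le 1$ and $\trho \le 1$: if $\talpha > 1$ then by Theorem~\ref{thm:symmetries-p}(i) in $(\um,\vm)$-coordinates (equivalently, the symmetry $(\um,\vm)\mapsto(m\um,\vm)$, which in $(\talpha,\trho)$-coordinates lets us replace $\talpha$ by $\talpha/m$) one can arrange a representative with $\talpha \le 1$, and the integer relation pulls back; similarly for $\trho$. With $\talpha,\trho \le 1$ the region $\cCbar_{\talpha,\trho}$ is a genuine open rectangle $(0,\talpha)\times(0,\trho)$ (mod $\ZZ^2$). Next, the crucial step: consider the orbit points $\widetilde{(n\talpha, n\trho)}$ for $n = 1, 2, 3, \dots$ and track when the orbit first "wraps around." Because the rectangle has a corner at the origin, the generator $(\talpha,\trho)$ lies just outside the closure only along the two edges $x = \talpha$ and $y = \trho$; I would argue that avoidance of the \emph{open} rectangle by the whole cyclic group forces, for some positive integer $n$ (the first return time in the $x$-direction or $y$-direction), that the $n$-th multiple lands exactly on a lattice point, i.e.\ $n\talpha \in \ZZ$ or $n\trho \in \ZZ$, OR lands exactly on the boundary segment $\{x=\talpha\}$ or $\{y=\trho\}$ of $\cCbar_{\talpha,\trho}$ in a way that reads off as $m\talpha + n\trho = 1$. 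Unwinding: an orbit point $\widetilde{(n\talpha,n\trho)}$ hitting the closed boundary means there exist integers $a,b$ with $0 \le n\talpha - a \le \talpha$ and $0 \le n\trho - b \le \trho$, with at least one of the four inequalities an equality; the equality case $n\talpha - a = \talpha$ (i.e.\ $(n-1)\talpha = a$) gives $(n-1)\talpha \in \ZZ$, and similarly; combining the $x$- and $y$-extremal data yields the desired $m\talpha + n\trho = 1$ with $m,n \ge 0$. A clean way to organize this is: let $n_0 \ge 1$ be minimal with $\{n_0 \talpha\} \ge 1 - \talpha$ or $\{n_0\trho\} \ge 1-\trho$ (existence unless $\talpha$ or $\trho$ is such that the orbit is finite, i.e.\ rational, which is an easy separate case), and examine $\widetilde{(n_0\talpha, n_0\trho)}$ against $\cCbar_{\talpha,\trho}$; disjointness pins down the exact relation.

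\textbf{Main obstacle.} I expect the heart of the difficulty to be the case analysis according to whether $\talpha$, $\trho$ are rational or irrational, and the careful "first hitting / first return" argument that converts the geometric avoidance of an \emph{open} region into an \emph{exact} algebraic identity. When both parameters are irrational, the cyclic orbit is equidistributed, so avoidance of the open rectangle is only possible if the orbit is constrained to a lower-dimensional subtorus — one must then show that subtorus forces $m\talpha + n\trho = 1$ (this is where the two-dimensional Diophantine Frobenius input alluded to in the text enters, handling the delicate rational sub-cases where $\langle(\talpha,\trho)\rangle_{\TT}$ is a finite group but the relation is not immediately visible). When one or both are rational, the orbit is finite and one checks directly that the finite set of points avoids the rectangle iff such $m,n$ exist, with the Frobenius/numerical-semigroup structure governing exactly which residues are hit. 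I would also need the boundary bookkeeping (open vs.\ closed rectangle) to be scrupulous, since the relation $m\talpha + n\trho = 1$ exactly describes when an orbit point sits on the corner-facing boundary. Once the extremal orbit point is located, reading off $m,n \ge 0$ and verifying they are not both zero is routine.
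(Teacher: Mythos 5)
Your high-level roadmap --- use the torus subgroup criterion of Proposition~\ref{prop:torus-pos}, then split on rationality of $(\talpha,\trho)$ with equidistribution plus some Frobenius-type input --- matches the paper's proof in outline. But the "first return / first wrap" argument you place at the center is not an argument that can be made to work as described, and it is not what the paper does. Tracking the minimal $n_0$ with $\{n_0\talpha\}\geq 1-\talpha$ or $\{n_0\trho\}\geq 1-\trho$ controls each coordinate separately, but the avoidance constraint couples the two coordinates: you need to know when $\{n\talpha\}\in(0,\talpha)$ and $\{n\trho\}\in(0,\trho)$ \emph{simultaneously}, and for a generic pair of rotations this joint recurrence structure is governed by something like a two-dimensional three-distance phenomenon, which does not "pin down an exact relation" in any direct way. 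The paper sidesteps all of this with a structural tool you never invoke: the closed subgroup theorem (Theorem~\ref{thm:lie-subgroup}). For the non-finite orbit case, the closure $\bar H$ of the cyclic group is a Lie subgroup; if it is $2$-dimensional the orbit is dense and (Q2) fails, and if it is $1$-dimensional then $\bar H$ is the preimage of $0$ under a character, which is precisely the source of the integer relation $m\talpha+n\trho=k$. The remaining work is a short sign-and-$k$ analysis using the geometry of $\cCbar_{\talpha,\trho}$. Your sketch never produces the linear relation; it only gestures at it.

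There are also two more concrete slips. First, your case split is misstated: you say "when one or both are rational, the orbit is finite," but $\langle(\widetilde{\talpha},\widetilde{\trho})\rangle_\TT$ is finite if and only if \emph{both} $\talpha$ and $\trho$ are rational. If exactly one is rational the orbit is infinite and dense in a finite union of circles, and in the paper this case goes into the Lie-subgroup/irrational branch, not the finite-orbit branch. Second, you describe the finite (both-rational) case as "easy" and as "checking directly," but in the paper this is Case~2-b and it is the genuinely delicate part: after normalizing to $(\talpha,\trho)=(s/b,t/b)$ with $\gcd(s,t)=1$, one must show that $b\notin s\NN+t\NN$ forces some multiple of $(\widetilde{s/b},\widetilde{t/b})$ into the open corner rectangle, and this is done by applying Sylvester duality (Theorem~\ref{thm:sylvester}) to write $st-b=ms+nt$ with $0<m<t$, $0<n<s$, and then exhibiting the explicit $N$ whose orbit point $(\widetilde{(s-n)/b},\widetilde{m/b})$ violates disjointness. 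Without this construction you have no argument in the rational case, so the plan as stated has a genuine gap there as well.
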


 We prove Theorem \ref{thm:pos-necessary}
in the remainder of Section \ref{subsec:pos-necessary}. 
The analysis concerns cyclic subgroups of the torus, using the criterion in 
Proposition  \ref{prop:torus-pos}.
In the next subsection we recall two useful facts for the analysis.

%
%
\subsubsection{Technical Tools}\label{sec:621}
We state two technical results used in the following proofs.

\begin{thm}[Closed subgroup theorem]
\label{thm:lie-subgroup}
Given a Lie group $G$ and a subgroup $H\subset G$, 
the topological closure $\bar{H}$ of the subspace $H$ is a Lie subgroup $\bar{H}\subset G$.
\end{thm}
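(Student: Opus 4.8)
The plan is to deduce the statement from two classical facts. First I would check, using only the topological group structure, that $\bar H$ is again a subgroup, hence a closed subgroup of $G$: since inversion is a homeomorphism, $i(\bar H)=\overline{i(H)}=\bar H$; since each left translation $L_a$ is a homeomorphism, $a\bar H=\overline{aH}$ for every $a\in G$, so first $H\bar H\subseteq\bar H$ and then $\bar H\,\bar H\subseteq\overline{H\bar H}\subseteq\bar H$. It then remains to prove the genuine content, namely Cartan's closed subgroup theorem: every closed subgroup $K$ of a Lie group $G$ is an embedded Lie subgroup.

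For Cartan's theorem I would use the standard argument via the exponential map $\exp\colon\mathfrak g\to G$. Set $\mathfrak k=\{X\in\mathfrak g:\exp(tX)\in K\text{ for all }t\in\RR\}$. The engine is a limiting lemma: if $h_j\in K$ with $h_j\to e$, $h_j\neq e$, and the unit vectors $\log(h_j)/\|\log(h_j)\|$ converge to $X$, then $X\in\mathfrak k$; one proves this by choosing integers $n_j$ with $n_j\log(h_j)\to tX$ and using $h_j^{\,n_j}\in K$ together with closedness of $K$ to get $\exp(tX)\in K$. From the lemma one shows $\mathfrak k$ is a linear subspace, and then --- by a contradiction argument using a linear complement $\mathfrak m$ of $\mathfrak k$ and the local diffeomorphism $(X,Z)\mapsto\exp X\exp Z$ --- that $\exp$ maps a neighborhood of $0$ in $\mathfrak k$ onto a neighborhood of $e$ in $K$. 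This gives a submanifold chart at $e$; left-translating it by elements of $K$ produces an atlas, and $\mathfrak k$ is automatically closed under the bracket (for instance by taking limits of conjugates $\exp(sX)\exp(tY)\exp(-sX)\in K$), so $K$ is an embedded Lie subgroup.

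The hard part will be showing that $\exp$ is \emph{onto} a neighborhood of $e$ in $K$, i.e.\ that $K$ has no ``extra'' tangent directions near the identity; this is the crux of Cartan's theorem and is exactly where closedness of $K$ (hence of $\bar H$) enters essentially, everything else being formal.

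Finally, I would remark that the only application of this theorem in the present paper is to the abelian group $G=\TT=\RR^2/\ZZ^2$ with $H$ a cyclic subgroup, where a shortcut avoids the general machinery: by the first paragraph $\bar H$ is a closed subgroup of $\RR^2/\ZZ^2$, so its preimage in $\RR^2$ is a closed subgroup of $\RR^2$ containing $\ZZ^2$; since every closed subgroup of $\RR^n$ has the form $V\oplus L$ with $V$ a linear subspace and $L$ discrete in a complement of $V$, one reads off directly that $\bar H$ is a finite disjoint union of parallel subtori.
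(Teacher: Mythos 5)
The paper does not actually prove this theorem; it cites Lee \cite[Theorem 20.12]{Lee13} and records the standing observation that $\bar H$ is a closed subgroup. Your proposal supplies substantially more: a correct sketch of the argument that the closure of a subgroup of a topological group is again a subgroup, followed by the standard exponential-map proof of Cartan's closed subgroup theorem (define $\mathfrak k=\{X:\exp(tX)\in K \text{ for all }t\}$, prove the limiting lemma, show $\mathfrak k$ is a subspace, show $\exp$ is a local surjection onto $K$ near $e$, translate). This is, in effect, the content of the reference being cited. Your final remark is also a good observation worth emphasizing: for the paper's single application --- $G=\TT=\RR^2/\ZZ^2$ and $H$ cyclic --- the general Lie-theoretic machinery is overkill, since the classification of closed subgroups of $\RR^n$ as $V\oplus L$ with $V$ a linear subspace and $L$ a lattice in a complement already gives that $\bar H$ is a finite union of parallel subtori, which is all that is used in the proof of Theorem \ref{thm:pos-necessary}. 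In short, your route is correct and more self-contained than the paper's, which outsources the theorem to a textbook; the trade-off is length versus reliance on a standard reference.
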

\begin{proof}
See  Lee \cite[Theorem 20.12, p. 523]{Lee13}.
Under the given hypotheses, $\bar{H}$ is a closed subgroup of $G$.
\end{proof}

\begin{thm}[Sylvester duality theorem]
\label{thm:sylvester}
Given $a,b$  coprime positive integers, let 
$$\sS := \sS(a, b) = a\NN + b\NN$$ 
denote the semigroup  generated by $a$ and $b$
inside $\NN = \{0, 1, 2, \ldots\}$.
Then $n\in \sS$ if and only if $ab - a - b - n\not\in \sS$.
\end{thm}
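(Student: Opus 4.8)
The plan is to prove both implications of the ``if and only if'' by exhibiting, for each $n$, a symmetry $n \mapsto ab-a-b-n$ that swaps representable and non-representable integers. First I would set $F = ab-a-b$ (the Frobenius number) and $F^\ast = ab - a - b - n$, and aim to show $n \in \sS \iff F^\ast \notin \sS$. The cleanest route is via the standard normal-form description of $\sS$: every integer $n$ with $0 \le n \le ab-1$ has a \emph{unique} representation $n \equiv ai \pmod b$ with $0 \le i \le b-1$, and then $n \in \sS$ if and only if the corresponding least nonnegative representative satisfies $n \ge ai$, i.e. $n = ai + bj$ with $j \ge 0$. (This uses that $a,b$ are coprime, so multiplication by $a$ permutes residues mod $b$.) So the first key step is to establish this normal form: given $n$, let $i \in \{0,1,\dots,b-1\}$ be determined by $ai \equiv n \pmod b$, write $n = ai + bj$ with $j \in \ZZ$, and observe $n \in \sS \iff j \ge 0$.

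The second key step is to compute the normal form of $F^\ast = ab - a - b - n$ and compare. Writing $n = ai + bj$ as above with $0 \le i \le b-1$, we get
\[
ab - a - b - n = ab - a - b - ai - bj = a(b-1-i) + b(-1-j).
\]
Here $i' := b-1-i$ lies in $\{0,1,\dots,b-1\}$, and $j' := -1-j$. Since the representative $i'$ is again in the canonical range, this \emph{is} the normal form of $F^\ast$, so $F^\ast \in \sS \iff j' \ge 0 \iff -1-j \ge 0 \iff j \le -1 \iff j < 0$. Combining with the first step: $n \in \sS \iff j \ge 0 \iff F^\ast \notin \sS$. This is exactly the claim. (One should also briefly note the edge behavior: when $n < 0$ the claim is read as $n \notin \sS$ and $F^\ast = F - n > F$ so $F^\ast \in \sS$, which is consistent; the coprime hypothesis with $a,b \ge 1$ makes the degenerate case $a=b=1$ give $\sS = \NN$ and $F = -1$, where the statement still holds vacuously on each side.)

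The main obstacle, such as it is, is purely bookkeeping: one must be careful that the ``canonical representative'' argument is set up so that the passage $i \mapsto b-1-i$ genuinely lands in the allowed index range and that $j$ is uniquely determined, which is where coprimality of $a$ and $b$ is essential (it guarantees $\{0, a, 2a, \dots, (b-1)a\}$ is a complete residue system mod $b$, hence the representation $n = ai + bj$ with $0 \le i \le b-1$ is unique). No deep input is needed — the closed subgroup theorem and the other tools in Section \ref{sec:621} are not required for this particular statement; the argument is elementary and self-contained. I would present it in three short paragraphs matching the three steps: (1) normal form and the criterion $n\in\sS \iff j \ge 0$; (2) the explicit computation of the normal form of $ab-a-b-n$; (3) the conclusion by comparison.
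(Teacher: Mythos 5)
Your proof is correct. Note, however, that the paper does not actually prove Theorem~\ref{thm:sylvester}; it simply cites Beck and Robins \cite{BeckR07} for a proof. So there is no internal argument to compare against. What you have written is the standard elementary proof via the Ap\'ery-set / normal-form description of the numerical semigroup: uniquely write $n = ai + bj$ with $0 \le i \le b-1$, $j\in\ZZ$ (possible because $a$ is invertible mod $b$), observe $n\in\sS \iff j\ge 0$, and then check that $ab-a-b-n = a(b-1-i) + b(-1-j)$ is again in normal form so that representability of the two is exactly complementary. All the steps check out, including the verification that $i' = b-1-i$ stays in the canonical index range, which is where coprimality is used. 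This approach is more direct than the generating-function route one often sees; it buys you a fully self-contained two-paragraph argument at the cost of no generality loss. One tiny nit: your parenthetical remark that the case $a=b=1$ ``holds vacuously'' is not quite the right word --- the biconditional $n\in\sS \iff -1-n\notin\sS$ is nonvacuously true there (both sides say $n\ge 0$) --- but this does not affect the proof.
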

\begin{proof}
See  Beck and Robins \cite[Theorem 1.3, p. 6]{BeckR07} and the discussion thereafter, on pp. 12-14.
\end{proof}

\noindent The name Sylvester is associated to this result from his  problem
 \cite{Sylvester:1884}   posed in 1884.

\begin{rmk}\label{rem:59}
Given positive integers $a_1, a_2, \ldots a_n$ having $\gcd(a_1, a_2, ..., a_n)=1$,
let $\sS= \sS(a_1, a_2, ..., a_n)$ denote the (finitely generated) nonnegative integer semigroup 
generated by  $a_1, \ldots, a_n$.
The $\gcd$ condition implies that $\sS$ includes all sufficiently large integers.
We  call  
$$NR(a_1,\ldots,a_n) :=  \NN \smallsetminus \sS(a_1,\ldots,a_n) $$ 
 the {\em Frobenius non-realizing set} of the  generators $a_1,\ldots,a_n$,
 and its largest member  is called the {\em Frobenius number} of $\sS$. 
 The {\em Diophantine Frobenius problem} is the
 problem of determining the Frobenius number as a function of $(a_1, a_2, ..., a_n)$.
This problem  has been extensively studied,  see Ram\'{i}rez Alfons\'{i}n \cite{RA05}.
In  the two-generator case $\sS= \sS(a, b)$ the Frobenius number is $ab-a-b$,
a result implied by  Sylvester duality. 
 In 2006 Tuenter \cite{Tue06} gave a characterization of the complete set $NR(a, b)$ in
 terms of  its moments.
\end{rmk}
%
\subsubsection{Proof of necessity direction}

\begin{proof}[Proof of Theorem \ref{thm:pos-necessary}]
We divide the proof into two cases, depending on whether or not both the parameters 
$\talpha, \trho$ are  rational.
In each case, 
we use the torus disjointness condition (Q2) of Proposition \ref{prop:torus-pos} 
in place of the nonnegative commutator condition (Q1).
Namely, it suffices to show that for (Q2) to hold,
it is necessary that
$m\talpha + n\trho = 1$ for some integers $m,n\geq 0$.

{\em Case 1:}
{\em At least one of $\talpha, \trho$ is irrational.}
In this case  the map
\begin{align*}
\widetilde{\phi}_{\talpha,\trho} : \ZZ &\to \TT \\
k &\mapsto (\widetilde{k\talpha}, \widetilde{k\trho})
\end{align*}
is injective and 
$H = \langle (\widetilde{\talpha},\widetilde{\trho}) \rangle_{\TT} = \im(\widetilde{\phi}_{\talpha,\trho})$ 
is an infinite cyclic subgroup of the torus $\TT$. 
Since $\TT$ is compact, $H$ cannot be a discrete subset of points.
Thus by Theorem \ref{thm:lie-subgroup} the closure of this subspace is a 
Lie subgroup $\bar{H}$ of dimension 1 or 2. 
If $\bar{H}$ is dimension 2 then $H$ is dense in $\TT$ 
and will intersect the non-empty open rectangle $\cCbar_{\talpha,\trho}$, so condition (Q2) fails.

If $\bar{H}$ is dimension 1, then 
the subgroup $\bar{H}$ is the projection to $\TT$ of the lines 
$\{ (x,y)\in \RR^2 : mx + ny \in \ZZ \}$
for some integers $m,n$.
(The subgroup $\bar{H}$ determines the pair $(m,n)$ almost uniquely, up to a sign $(-m,-n)$;
the ratio $-\frac{m}{n}$ is the slope of the lines in $\bar{H}$, 
and $\gcd(m,n)$ is the number of connected components of $\bar{H}$.)
The parameters $\talpha,\trho$ must satisfy some integer relation
\begin{equation*}
m\talpha + n\trho = k, \quad m,n,k \in \ZZ .
\end{equation*}
We must have $\gcd(m,n,k) = 1$
since the cyclic subgroup $H = \langle (\widetilde{\talpha},\widetilde{\trho}) \rangle_{\TT}$ 
is assumed to be dense in $\bar{H} = \{(\widetilde{x},\widetilde{y})\in \TT : mx + ny \in \ZZ\}$; 
otherwise, $\gcd(m,n,k)$ would give the index of $\bar{H}$ inside 
$\{(\widetilde{x},\widetilde{y})\in \TT : mx + ny \in \ZZ\}$.

If the integers $m,n$ have opposite sign, 
then the lines in $\bar{H}$ will have positive slope and $H$ will intersect 
$\cCbar_{\talpha,\trho}$ in a neighborhood of $(0,0)$. 
See Figure \ref{fig:torus-pos-slope}.
\begin{figure}[h]
\begin{center}
\begin{tikzpicture}[scale=3]
  \draw[-] (0,0) -- (1,0);
  \draw[-] (0,0) -- (0,1);
  \draw[dotted] (1,0) -- (1,1);
  \draw[dotted] (0,1) -- (1,1);
  
  \draw[domain=0:1,smooth,variable=\x, blue] plot ({\x},{2/3*\x});
  \draw[domain=0:1/2,smooth,variable=\x, blue] plot ({\x},{2/3*\x+2/3});
  \draw[domain=1/2:1,smooth,variable=\x, blue] plot ({\x},{2/3*\x-1/3});
  \draw[domain=0:1,smooth,variable=\x, blue] plot ({\x},{2/3*\x+1/3});
  
  \node[draw,circle,inner sep=0.5pt,fill,red] at (2/5,4/15) {};
  \draw[-] (2/5,5/15) node[above] {$(\talpha,\trho)$};
  
  \fill[gray,nearly transparent] (0,0) -- (0,4/15) -- (2/5,4/15) -- (2/5,0) -- cycle; 
  \end{tikzpicture}
\end{center}
\caption{Solutions to $2x-3y\in\ZZ$ in the torus $\TT = \RR^2/\ZZ^2$}
\label{fig:torus-pos-slope}
\end{figure} 

Otherwise $m,n$ have the same sign (including the case where $m$ or $n$ is zero) and we may assume that $m,n,k$ are all nonnegative. 
If $k\geq 2$, then the point $(\frac1{k}\talpha, \frac1{k}\trho)$ lies in the closed subgroup $\bar{H}$ and in the open rectangle $\cCbar_{\talpha, \trho}$. 
See Figure \ref{fig:torus-neg-slope}.
This implies $H$ must also intersect the open region $\cCbar_{\talpha,\trho}$,
so condition (Q2) fails.
Thus $k = 1$ is necessary for (Q2) to hold in this case.

\begin{figure}[h]
\begin{center}
  \begin{tikzpicture}[scale=3]
  \draw[-] (0,0) -- (1,0);
  \draw[-] (0,0) -- (0,1);
  \draw[dotted] (1,0) -- (1,1);
  \draw[dotted] (0,1) -- (1,1);
  \draw[domain=0:2/3,smooth,variable=\x, blue] plot ({\x},{1-3/2*\x});
  \draw[domain=2/3:1,smooth,variable=\x, blue] plot ({\x},{2-3/2*\x});
  \draw[domain=0:1/3,smooth,variable=\x, blue] plot ({\x},{1/2-3/2*\x});
  \draw[domain=1/3:1,smooth,variable=\x, blue] plot ({\x},{3/2-3/2*\x});
  
  \node[draw,circle,inner sep=0.5pt,fill,red] at (4/9,1/3) {};
  \draw[-] (4/9,1/3) node[right] {$(\talpha, \trho)$};
  
  \node[draw,circle,inner sep=0.5pt,fill,red] at (2/9,1/6) {};
  \draw[-] (2/11,1/6) node[left] {$(\frac{1}{2} \talpha, \frac{1}{2} \trho)$};
  
  \fill[gray,nearly transparent] (0,0) -- (0,1/3) -- (4/9,1/3) -- (4/9,0) -- cycle; 
  \end{tikzpicture}
\qquad\qquad
  \begin{tikzpicture}[scale=3]
  \draw[-] (0,0) -- (1,0);
  \draw[-] (0,0) -- (0,1);
  \draw[dotted] (1,0) -- (1,1);
  \draw[dotted] (0,1) -- (1,1);
  \draw[domain=0:2/3,smooth,variable=\x, blue] plot ({\x},{1-3/2*\x});
  \draw[domain=2/3:1,smooth,variable=\x, blue] plot ({\x},{2-3/2*\x});
  \draw[domain=0:1/3,smooth,variable=\x, blue] plot ({\x},{1/2-3/2*\x});
  \draw[domain=1/3:1,smooth,variable=\x, blue] plot ({\x},{3/2-3/2*\x});
  
  \node[draw,circle,inner sep=0.5pt,fill,red] at (2/9,1/6) {};
  \draw[-] (2/9,1/6) node[right] {$(\talpha, \trho)$};
  
  \fill[gray,nearly transparent] (0,0) -- (0,1/6) -- (2/9,1/6) -- (2/9,0) -- cycle; 
  \end{tikzpicture}
\end{center}
\caption{Solutions to $3x+2y\in\ZZ$ in the torus $\TT$; with $k=2$ and $k=1$}
\label{fig:torus-neg-slope}
\end{figure} 

{\em Case 2:}
{\em The parameters $\talpha,\trho$ are both rational.}
We may express $(\talpha,\trho) = (s\frac{a}{b}, t\frac{a}{b})$ 
where $s,t$ are coprime positive integers and $a,b$ are coprime positive integers.
In this case the subgroup
$$H = \langle (\widetilde{\talpha},\widetilde{\trho})\rangle_{\TT} 
 = \{ (\widetilde{\frac{nsa}{b}}, \widetilde{\frac{nta}{b} }) \in \TT : n\in \ZZ \}$$
is a discrete subspace of the torus, containing precisely $b$ points. 
By the assumption that $\gcd(a, b) =1$, we have
$$H = \langle (\widetilde{\frac{s}{b}}, \widetilde{\frac{t}{b}})\rangle_{\TT} 
= \{ \left(\widetilde{ \frac{ns}{b}}, \widetilde{ \frac{nt}{b}}\right): 0 \le n \le b-1\}.$$ 

{\em Case 2-a:} $a \geq 2$. 

In this case   $ (\widetilde{\frac{s}{b}}, \widetilde{\frac{t}{b}})$ belongs to  $H$ 
and lies inside the open  corner rectangle $\cCbar_{\talpha,\trho}$.
 Thus condition (Q2) does not hold.

{\em Case 2-b:}   $a = 1$.

In this case,  we have  $(\talpha,\trho) = (\frac{s}{b}, \frac{t}{b})$.
The proposition asserts
$(\talpha,\trho) = (\frac{s}{b}, \frac{t}{b})$ 
satisfies the commutator inequality relation
$[f_\talpha, f_{\talpha/\trho}]\geq 0$ 
only if there exist nonnegative  integers $m,n$ such that
\[ m\frac{s}{b} + n\frac{t}{b} = 1,\]
or equivalently, only if $b \in s\NN + t\NN$, 
the semigroup generated by $s$ and $t$ inside $\NN = \{0, 1, 2, \ldots\}$.
To prove this  we consider its contrapositive. 

To show the contrapositive, suppose  $b \not\in \NN s + \NN t$. 
Then by the Sylvester Duality Theorem \ref{thm:sylvester} we have  
\[ st - s - t - b \in s\NN  + t\NN. \]
In consequence $1 \le b < st$ and we can express 
\[ st - b = ms + nt \]
for some  {positive} integers $m,n \geq 1$.
The integers necessarily satisfy $1 \le m < t$ and $1 \le n < s$, since $ 1\leq st-b < st.$

Since $\gcd(s, t)=1$ there exist  (possibly negative) integers $m_0$, $n_0$  satisfying
\[ 
1 = m_0 s + n_0 t .
\]
Note that  $m_0$ is a multiplicative inverse of $s$ modulo $t$, 
and similarly  $n_0$ is inverse to $t$ modulo $s$.
We now assert the following claim:
 
{\bf Claim.}  {\em The point  $(\widetilde{\frac{s-n}{b}}, \widetilde{\frac{m}{b}})$ is in the torus subgroup 
$H = \langle (\widetilde{\frac{s}{b}}, \widetilde{\frac{t}{b}})\rangle_{\TT}$.
Specifically, we  show that in the torus $\TT$, 
the point $(\widetilde{\frac{s-n}{b}}, \widetilde{\frac{m}{b}})$ is equal to the multiple 
$(\widetilde{N\frac{s}{b}},\widetilde{N\frac{t}{b}})$
where 
$$N = m_0(s-n) + n_0 m .$$}

First we verify that $N$ satisfies the following relations to $b$: since 
$$b = st - ms - nt = (s-n)t - ms  $$
we have 
\begin{align*} 
m_0 b &=  m_0(s-n)t  -m m_0 s \\
&=  m_0(s-n)t + m(n_0 t-1)  \\
&= ( m_0 (s - n) + n_0 m )t -m = Nt - m
\end{align*}
and
\begin{align*}
-n_0 b  &= - (s-n)n_0 t + n_0 m s \\
&= (s-n)(m_0 s -1) + n_0 m s  \\
&= ( m_0(s-n) + n_0m) s - (s-n) = Ns - (s-n).
\end{align*}
This shows that
$$ N = \frac{1}{t}(m_0b +m) = \frac{1}{s} (-n_0b + s-n) .$$

Second,  consider the multiple 
$(N\talpha,N\trho) = (N\frac{s}{b}, N\frac{t}{b})$. 
We have
\begin{align*}
N\frac{s}{b} &= \left(\frac{-n_0b+s-n}{s}\right) \frac{s}{b} = -n_0 + \frac{s-n}{b}
\end{align*}
so $N\frac{s}{b} \equiv \frac{s-n}{b}$ modulo 1,
and
\begin{align*}
N\frac{t}{b} = \left(\frac{m_0 b+m}{t}\right) \frac{t}{b}
= m_0 + \frac{m}{b}
\end{align*}
so $N\frac{t}{b} \equiv \frac{m}{b}$ modulo 1.
This shows that $(\widetilde{\frac{s-n}{b}}, \widetilde{\frac{m}{b}}) 
= (\widetilde{N\frac{s}{b}},\widetilde{N\frac{t}{b}})$ 
is in the torus subgroup 
$H = \langle(\widetilde{\talpha}, \widetilde{\trho}) \rangle_{\TT}$,
proving the claim. 

Now the claim certifies that 
$(\talpha, \trho) = (\frac{s}{b}, \frac{t}{b})$ fails to satisfy property
(Q2) in Proposition \ref{prop:torus-pos}, since 
$(\widetilde{\frac{s-n}{b}}, \widetilde{\frac{m}{b}}) \in H$,
while the bounds
$$ 0 < s-n < s \quad\text{and}\quad 0<m<t $$
show that  $(\widetilde{\frac{s-n}{b}}, \widetilde{\frac{m}{b}})$  
lies in the open rectangle $\cCbar_{\talpha,\trho}$.
We conclude  property  (Q1) does not hold.
The contrapositive is proved, completing Case 2-b.
\end{proof}

By converting Theorem \ref{thm:pos-necessary} from $(\talpha,\trho)$ coordinates
to $(\alpha,\beta)$ coordinates, we have the proof of the other direction of Theorem \ref{thm:positive}.
Thus Theorem \ref{thm:positive} is now proved,
by combining the statements of Theorem \ref{thm:pos-sufficient} and Theorem \ref{thm:pos-necessary}.

%
%
\section{Proof of Closure Theorem \ref{thm:closed} for Positive and Mixed Sign Dilations}
\label{sec:consequences}
\setcounter{equation}{0}


In this section we prove the closure property of Theorem \ref{thm:closed} 
restricted to the closed set  of nonnegative dilations and mixed sign dilations, on 
the closed region $\RR^2_{nn}$ where at least one coordinate is nonnegative.
\begin{thm}
\label{thm:closed2}
Let $S$ be the set of all dilation factors $(\alpha, \beta)$ 
which satisfy the nonnegative commutator inequality ${[f_\alpha, f_\beta]\geq 0 }$,
where $f_\alpha(x) = \floor{\alpha x}$. Let 
$$\RR^2_{nn}= \{ (\alpha, \beta) \in \RR^2 : \alpha \ge 0 \mbox{ or } \beta \ge 0\}.$$
Then $S \cap \RR^2_{nn}$ is a closed subset of $\RR^2_{nn}$. 
\end{thm}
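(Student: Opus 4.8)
The plan is to prove that $S \cap \RR^2_{nn}$ is closed by showing it is sequentially closed: if $(\alpha_k, \beta_k) \in S \cap \RR^2_{nn}$ and $(\alpha_k,\beta_k) \to (\alpha_*, \beta_*)$, then $(\alpha_*, \beta_*) \in S$ (membership in $\RR^2_{nn}$ is automatic, since $\RR^2_{nn}$ is closed in $\RR^2$). First I would dispatch the cases governed by the signs of the limit. If $\alpha_* = 0$ or $\beta_* = 0$, the limit lies on a coordinate axis and hence in $S$ by inspection. If $\alpha_* < 0$, then $\beta_* \ge 0$ because $(\alpha_*,\beta_*)\in \RR^2_{nn}$; the subcase $\beta_* = 0$ is already covered, and if $\beta_* > 0$ then $(\alpha_*,\beta_*) \in S$ by the mixed-sign classification, Theorem \ref{thm:mixed-sign}(a). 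The configuration $\alpha_* > 0 > \beta_*$ cannot occur: for $k$ large we would have $\alpha_k > 0 > \beta_k$, contradicting $(\alpha_k,\beta_k) \in S$ by Theorem \ref{thm:mixed-sign}(b). So the only case requiring work is $\alpha_* > 0$ and $\beta_* > 0$.

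For this case, note that $\alpha_k, \beta_k > 0$ for all sufficiently large $k$, so by the positive-dilation classification (Theorem \ref{thm:positive}) there exist integers $m_k, n_k \ge 0$, not both zero, with $m_k\alpha_k\beta_k + n_k\alpha_k = \beta_k$. The crucial point is that all of $m_k, n_k, \alpha_k, \beta_k$ are nonnegative, so the two summands on the left are nonnegative and each is at most $\beta_k$; dividing $m_k \alpha_k\beta_k \le \beta_k$ by $\beta_k>0$ gives $m_k \le 1/\alpha_k$, while $n_k\alpha_k \le \beta_k$ gives $n_k \le \beta_k/\alpha_k$. Since $\alpha_k \to \alpha_* > 0$ and $\beta_k/\alpha_k \to \beta_*/\alpha_*$, the sequences $(m_k)$ and $(n_k)$ are bounded, so $\{(m_k,n_k)\}$ is a finite set of pairs. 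Passing to a subsequence along which $(m_k, n_k)$ equals a fixed pair $(m,n)$ — still not both zero — and letting $k\to\infty$ in $m\alpha_k\beta_k + n\alpha_k = \beta_k$ yields $m\alpha_*\beta_* + n\alpha_* = \beta_*$. By Theorem \ref{thm:positive} this gives $(\alpha_*, \beta_*) \in S$, completing the argument. Equivalently, one could run this step in the $(\um,\vm)$-coordinates of Theorem \ref{thm:positive-uv}, where the relation $m/\um + n/\vm = 1$ makes the boundedness of $m_k$ and $n_k$ even more transparent, since both quantities $1/\um$ and $1/\vm$ stay bounded.

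The step I expect to be the real obstacle is precisely this compactness argument for the indices $(m_k,n_k)$. A priori one might worry about closedness near the coordinate axes, because infinitely many of the one-dimensional solution families accumulate toward the origin and toward the positive $\alpha$-axis, so the decomposition of $S$ into semialgebraic pieces is not locally finite there. What makes the proof go through is that all such accumulation points lie on the axes, which belong to $S$ by inspection, whereas away from the axes the parameters $(\alpha,\beta)$ — equivalently $(\um,\vm) = (1/\alpha,\beta/\alpha)$ — remain in a compact subset of the open first quadrant, which is exactly what pins the defining indices into a finite set and permits passage to a constant subsequence. Everything else is sign bookkeeping already supplied by Theorems \ref{thm:mixed-sign} and \ref{thm:positive}.
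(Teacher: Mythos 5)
Your proof is correct, and the sign bookkeeping in the first paragraph is carefully handled (including ruling out $\alpha_* > 0 > \beta_*$ and invoking closedness of $\RR^2_{nn}$). But it takes a genuinely different route from the paper's proof of Theorem \ref{thm:closed2}. You use the full classification Theorem \ref{thm:positive}: every first-quadrant point of $S$ satisfies $m\alpha\beta + n\alpha = \beta$ for some pair $(m,n)$ of nonnegative integers not both zero, and your contribution is the compactness argument — the bounds $m_k \le 1/\alpha_k$ and $n_k \le \beta_k/\alpha_k$ force the defining pairs into a finite set, so a constant subsequence exists and the algebraic relation passes to the limit. The paper instead works one level lower in the chain of equivalences: it invokes only the lattice disjointness criterion of Proposition \ref{lem:diagonal-pos} (condition (P2): $\Lambda_{\um,\vm}\cap\cD=\emptyset$), not the classification theorem, and observes that after the homeomorphic change of coordinates to $(\um,\vm)$ the first-quadrant part of $S$ is $\RR^2_+ \smallsetminus \bigcup_{m,n}\cD^{(m,n)}$, the complement of a union of open sets, hence closed with no sequential argument at all. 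Your version costs more in logical dependence (it needs both directions of Theorem \ref{thm:positive}, a considerably harder fact) but is more self-contained at the level of elementary real analysis; the paper's version is shorter, needs only the geometric criterion that is established before the classification, and makes the topological mechanism — ``closed because it avoids countably many open obstacles'' — completely explicit. Your closing paragraph correctly identifies that the apparent danger is non-local-finiteness of the solution curves near the axes, and that boundedness of $(1/\alpha_k,\beta_k/\alpha_k)$ away from the axes is what saves the compactness step; this is exactly the phenomenon the paper's open-complement formulation sidesteps.
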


\begin{proof}
To show that $S \subset \RR^2_{nn}$ is closed, 
is suffices to check that its intersection with each closed quadrant is closed. 
The set $\RR^2_{nn}$ is the union of three  closed quadrants. 
Since the two coordinate axes are entirely  in $S$,  
it suffices to check that $S$ is closed in each open quadrant.
For the second and fourth quadrants, this is immediate by Theorem \ref{thm:mixed-sign}.

In the open first quadrant $\RR^2_{+}$, let $S_{I} = \{(\alpha,\beta) > 0 :  [f_\alpha,f_\beta]\geq 0\}$.
We use Proposition \ref{lem:diagonal-pos} in $(\um, \vm)$-coordinates to prove that $S_{I}$ is closed.
The birational map 
$(\um,\vm) \mapsto (\frac{1}{\um},\frac{\vm}{\um}) = (\alpha, \beta)$ 
is a homeomorphism from the open first quadrant to itself, 
so it suffices to show that 
$$ {S}_{I}^{\um,\vm} = \{ (\um,\vm) > 0:   [f_{1/\um},f_{\vm/\um}] \geq 0\} $$
is closed inside the open first quadrant $\RR^2_{+}$.
Now the lattice disjointness criterion in Proposition \ref{lem:diagonal-pos}
yields  
\begin{align*}
{S}_{I}^{\um,\vm} &= \{ (\um,\vm)>0 : (m \um,n\vm) \not\in \cD \text{ for all }m,n\in \ZZ\}\\
 &= \{ (\um,\vm)>0 : (\um,\vm)\not\in\cD^{(m,n)} \text{ for all }m,n\in \ZZ \},\\
 &= \RR^2_{+} \smallsetminus \bigcup_{m,n} \cD^{(m,n)}.
\end{align*} 
Here  $\cD^{(m,n)} := \{(x,y) : (mx, ny)\in \cD \}$, where 
the region $\mathcal{D}:= \{(x, y): \floor{y} < x < \ceil{y} \}$ is an
open set in $\RR^2$. 
Thus each  $\cD^{(m,n)}$   is an open set in $\RR^2$,
since $\cD^{(m,n)}$  is homeomorphic to $\cD$ if $m,n\neq 0$, 
and it is empty otherwise.
It follows that
${S}_{I}^{\um,\vm}$ is the complement of an open set in $\RR^2_{+}$ 
so is closed, as desired.
\end{proof}

\section*{Acknowledgements}
We are indebted to David Speyer for his observation that our results implied the
preorder property  stated in  Theorem \ref{thm:poset}.
The second author thanks  Yilin Yang for helpful conversations.


\end{document}